\newtheorem{theorem}{Theorem}[section]
\theoremstyle{definition}
\numberwithin{equation}{section}
\title[Two-Scale numerical simulation of sand transport problems]
      {Two-Scale numerical simulation of sand transport problems}
\author[Ibrahima Faye, Emmanuel Fr\'enod and Diaraf Seck]{}
\subjclass{Primary: 35K65, 35B25, 35B10 ; Secondary: 92F05, 86A60 .}
\keywords{Homogenization,
Asymptotic Analysis, Asymptotic Expansion, Long Time Behavior,
Dune and Megaripple Morphodynamics, Modeling Coastal Zone Phenomena, Numerical Simulation.}
 \email{ibrahima.faye@uadb.edu.sn}
 \email{emmanuel.frenod@univ-ubs.fr}
 \email{diaraf.seck@ucad.edu.sn}
\newcommand{\ds}{\displaystyle}
\newcommand{\N}{\mathbb{N}}
\newcommand{\R}{\mathbb{R}}
\newcommand{\torus}{{\mathbb{T}}}
\begin{document}
\maketitle

\centerline{\scshape Ibrahima Faye}
\medskip
{\footnotesize
 \centerline{Universit\'e de Bambey,UFR S.A.T.I.C, BP 30 Bambey (S\'en\'egal),}
\centerline{  Ecole Doctorale de
                      Math\'ematiques et Informatique. }
   \centerline{Laboratoire de Math\'ematiques de la D\'ecision et d'Analyse Num\'erique}
\centerline{ (L.M.D.A.N) F.A.S.E.G)/F.S.T. }
} 

\medskip

\centerline{\scshape Emmanuel Fr\'enod and Diaraf Seck}
\medskip
{\footnotesize
 \centerline{Universit\'e Europ\'eenne de Bretagne, LMBA(UMR6205)}
   \centerline{ Universit\'e de Bretagne-Sud, Centre Yves Coppens,}
  \centerline{ Campus de Tohannic, F-56017,
       Vannes Cedex, France}
  \centerline{ET}
   \centerline{Projet INRIA Calvi, Universit\'{e} de Strasbourg, IRMA,}
   \centerline{7 rue Ren\'e Descartes, F-67084 Strasbourg Cedex, France}
 }
\medskip
{\footnotesize
 \centerline{Universit\'e Cheikh Anta Diop de Dakar, BP 16889 Dakar Fann,}
  \centerline{  Ecole Doctorale de
                      Math\'ematiques et Informatique. }
   \centerline{Laboratoire de Math\'ematiques de la D\'ecision et d'Analyse Num\'erique}
    \centerline{ (L.M.D.A.N) F.A.S.E.G)/F.S.T. }
      \centerline{ET}
   \centerline{UMMISCO, UMI 209, IRD, France}
\bigskip

 \centerline{(Communicated by the associate editor name)}
\footnote{\thanks{This work is supported by NLAGA(Non Linear Analysis, Geometry and Application Project).}
}

\pagestyle{myheadings}
 \renewcommand{\sectionmark}[1]{\markboth{#1}{}}
\renewcommand{\sectionmark}[1]{\markright{\thesection\ #1}}
%

\begin{abstract} In this paper we consider the model built in \cite{FaFreSeN} for short term dynamics of dunes in tidal area. 
We construct a Two-Scale Numerical Method based on the fact that the solution of the equation which has oscillations Two-Scale converges to the solution of a well-posed problem.
This numerical method uses on Fourier series.
\end{abstract}


\section{Introduction}       
This paper deals with numerical simulations of sand transport problems. Its goal is to build a Two-Scale Numerical Method to simulate dynamics of dunes in tidal area.\\
This paper enters a work program concerning the development of Two-Scale Numerical
Methods to solve PDEs with oscillatory singular perturbations linked with physical phenomena.
In Ailliot, Fr\'enod and Monbet\,\cite{AiFreMon}, such a method is used to manage the tide oscillation for long term
drift forecast of objects in coastal ocean waters.
Fr\'enod, Mouton and Sonnendr\"{u}cker\,\cite{FrMoutonSon}  made simulations of the 1D Euler equation using a Two-Scale Numerical Method. In Fr\'enod, Salvarani and Sonnendr\"{u}cker\,\cite{FrSalSon}, such a
method is used to simulate a charged particle beam in a periodic focusing channel.
Mouton \cite{MoutT, MoutA} developped a Two-Scale Semi Lagrangian Method for beam and plasma applications.

~

We consider the following model, valid for short-term dynamics of dunes, built and studied in \cite{FaFreSeN}:

\begin{equation}\label{3.5n}\left\{\begin{array}{cc}
    \ds \frac{\displaystyle\partial z^{\epsilon}}{\partial t}-\frac{1}{\epsilon}\nabla\cdot(\mathcal{A}^{\epsilon}\nabla z^{\epsilon})=\frac{1}{\epsilon}\nabla\cdot\mathcal{C}^{\epsilon},\\
    \ds z^{\epsilon}_{|t=0}=z_{0},
\end {array}\right.\end{equation}
where $z^\epsilon=z^\epsilon(t,x)$ is the dimensionless seabed altitude. For a given $T,\,\,t\in (0,T)$ stands for the dimensionless time and
$x\in \torus^{2},\,\,\torus^2$ being the two dimensional torus $\mathbb R^2/ \mathbb Z^2,$   stands for the dimensionless position and $\mathcal{A}^{\epsilon},\,\,\mathcal{C}^{\epsilon}$ are given by
\begin{equation}\label{3.1n}
    \mathcal{A}^{\epsilon}(t,x)=\widetilde{\mathcal{A}}^{\epsilon}(t,x)+\epsilon\widetilde{\mathcal{A}}_{1}^{\epsilon}(t,x),
\end{equation}
and
\begin{equation}\label{3.3n}
    \mathcal{C}^{\epsilon}(t,x)=\widetilde{\mathcal{C}}^\epsilon(t,x)+\epsilon\widetilde{\mathcal{C}}_1^\epsilon(t,x),
\end{equation}
where, for three positive constants $a$, $b$ and $c$,
\begin{equation}\label{3.6n}\widetilde{\mathcal{A}}^{\epsilon}(t,x)
=\widetilde{\mathcal{A}}(t,\frac{t}{\epsilon},x)=a\,g_a(|\mathcal{U}(t,\frac{t}{\epsilon},x)|),
\end{equation}
\begin{equation}\label{3.7n} \widetilde{\mathcal{C}}^{\epsilon}(t,x)=\widetilde{\mathcal{C}}(t,\frac{t}{\epsilon},x)=
c\,g_c(|\mathcal{U}(t,\frac{t}{\epsilon},x)|)\,
\frac{\mathcal{U}(t,\frac{t}{\epsilon},x)}{|\mathcal{U}(t,\frac{t}{\epsilon},x)|},
\end{equation}
and
\begin{equation}\label{3.7n1}\widetilde{\mathcal{A}}_{1}^{\epsilon}(t,x)=\widetilde{\mathcal{A}}_{1}(t,\frac{t}{\epsilon},x),\,\,
\widetilde{\mathcal{C}}_{1}^{\epsilon}(t,x)=\widetilde{\mathcal{C}}_{1}(t,\frac{t}{\epsilon},x),
\end{equation}
with
\begin{equation}
\widetilde{\mathcal{A}}_{1}(t,\theta,x)=-ab\mathcal{M}(t,\theta,x)\,g_a(|\mathcal{U}(t,\theta,x)|)\,\,
\textrm{and}\,\,\,\widetilde{\mathcal{C}}_{1}(t,\theta,x)=
-cb\mathcal{M}(t,\theta,x)\,g_c(|\mathcal{U}(t,\theta,x)|)\,\frac{\mathcal{U}(t,\theta,x)}{|\mathcal{U}(t,\theta,x)|}.
\end{equation}
$\mathcal U$ and $\mathcal M$ are the dimensionless water velocity and height.\\
The small parameter $\epsilon$ involved in the model is the ratio between the main tide period $\frac{1}{\bar\omega}=13$ hours and an observation time which is about three months i.e. $\epsilon=\frac{1}{\bar t\bar\omega}=\frac{1}{200}.$\\
The following hypotheses on $g_a,\,\,g_c,\,\,\mathcal U$ and  $\mathcal M$ given in (\ref{eq2n}) and (\ref{eq8n})
are technical assumptions and are needed to prove Theorem\,\ref{thHomSecHomn}.
Functions $g_{a}$ and $g_{c}$ are regular functions
on $\R^+$ and satisfy
\begin{equation}\label{eq2n}\left\{ \begin{array}{ccc}\vspace{0.25cm}
\hspace{-4cm}g_{a}\geq g_{c}\geq0,\,\,g_{c}(0)=g'_{c}(0)=0,\\
\vspace{0.25cm}\hspace{-2cm}\exists\, d\geq0,\sup_{u\in\mathbb{R}^{+}}|g_{a}(u)|+\sup_{u\in\mathbb{R}^{+}}|g'_{a}(u)|\leq d,\\
   \vspace{0.25cm} \hspace{-3.15cm}\sup_{u\in\mathbb{R}^{+}}|g_{c}(u)|+\sup_{u\in\mathbb{R}^{+}}|g'_{c}(u)|\leq d,\\
   \vspace{0.25cm} \exists\, U_{thr}\geq0,\,\,\exists\, G_{thr}>0,\,\,\textrm{such that}\,\, u\geq U_{thr}\Longrightarrow g_{a}(u)\geq G_{thr}.\end{array}\right.
\end{equation}

Functions $\mathcal{U}$ and $\mathcal{M}$ are regular
and satisfy:
\begin{equation}\label{eq8n}\left\{\begin{array}{ccc}\vspace{0.25cm}
\hspace{-4cm}\ds\theta\longmapsto(\mathcal{U},\mathcal{M})\,\,\textrm{is periodic of period 1,}\\
\vspace{0.25cm}
\hspace{-3cm}\ds |\mathcal{U}|,\,\,|\frac{\partial \mathcal{U}}{\partial t}|,\,\,|\frac{\partial \mathcal{U}}{\partial \theta}|,\,\,|\nabla \mathcal{U}|,
\hspace{3cm}\\ \hspace{3cm}
\vspace{0.25cm}
\hspace{-5.7cm}\ds |\mathcal{M}|,\,\,|\frac{\partial \mathcal{M}}{\partial t}|,\,\,|\frac{\partial \mathcal{M}}{\partial \theta}|,\,\,|\nabla \mathcal{M}| \,\,\textrm{are bounded by}\,\,d,\\
\vspace{0.25cm}
            \ds\forall\, (t,\theta,x)\in\mathbb{R}^{+}\times\mathbb{R}\times\torus^{2},\,\,|\mathcal{U}(t,\theta,x)|\leq U_{thr}\Longrightarrow
 \hspace{2.5cm}\\  \hspace{0.5cm}
 \vspace{0.25cm}
 \hspace{-5cm}\Big(\ds\frac{\partial\mathcal{U}}{\partial t} (t,\theta,x) =0,\,\,\nabla\mathcal{U}(t,\theta,x)=0,
 \\ \hspace{2.5cm}
 \vspace{0.25cm}
 \hspace{-6cm}\ds\frac{\partial\mathcal{M}}{\partial t} (t,\theta,x) =0,\,\,\textrm{ and}\,\,\nabla\mathcal{M}(t,\theta,x)=0\Big),\\
            \exists\, \theta_{\alpha}<\theta_{\omega}\in[0,1]\,\,\textrm{such that}\,\, \forall\,\,\theta\in [\theta_{\alpha},\theta_{\omega}]\Longrightarrow|\mathcal{U}(t,\theta,x)|\geq U_{thr}.
\end{array}\right.\end{equation}
To develop the Two-Scale Numerical Method, we use that in \cite{FaFreSeN} we proved that under assumptions (\ref{eq2n}) and (\ref{eq8n}) the solution $z^\epsilon$ of (\ref{3.5n}) exists, is unique and moreover asymptotically behaves, as $\epsilon\rightarrow0,$ the way given by the following theorem.
\begin{theorem}\label{thHomSecHomn}
Under assumptions (\ref{eq2n}) and (\ref{eq8n}), for any $T,$ not depending on $\epsilon,$  the sequence $(z^{\epsilon})$ of solutions to (\ref{3.5n}), with coefficients given by (\ref{3.1n}) coupled with (\ref{3.6n}) and (\ref{3.3n}), (\ref{3.7n}) and (\ref{3.7n1}), Two-Scale converges to the profile
$Z\in L^{\infty}([0,T],L^{\infty}_\#(\R,L^2(\torus^{2})))$ solution to
\begin{equation}
\label{ee179n} 
\frac{\partial Z}{\partial\theta}
-\nabla\cdot(\widetilde{\mathcal{A}}\nabla Z)=\nabla \cdot\widetilde{\mathcal{C}},
\end{equation} where $\widetilde{\mathcal{A}}$ and $\widetilde{\mathcal{C}}$ are given by
\begin{equation}\label{EqHomcofn}
\widetilde{\mathcal{A}}(t,\theta,x)=a\,g_a(|\mathcal{U}(t,\theta,x)|) \,\, \textrm{and}\,\,\,\,\widetilde{\mathcal{C}}(t,\theta,x)=c\,g_c(|\mathcal{U}(t,\theta,x)|)\,\frac{\mathcal{U}(t,\theta,x)}{|\mathcal{U}(t,\theta,x)|}.
\end{equation}
Futhermore,
if the supplementary assumption
\begin{gather}
\label{201308151611}  
U_{thr}=0,
\end{gather}
is done, we have
\begin{equation}\label{EqRajn}\widetilde{\mathcal{A}}(t,\theta,x)\geq \widetilde G_{thr}\,\,\textrm{for any}\,\,t,\theta,x
\in [0,T]\times\R\times\torus^2,
\end{equation}
and, defining $Z^{\epsilon}=Z^{\epsilon}(t,x)=Z(t,\frac{t}{\epsilon},x)$, the following estimate holds for $z^{\epsilon}-Z^{\epsilon}$
\begin{equation}\Big\|\frac{z^{\epsilon}-Z^{\epsilon}}{\epsilon}\Big\|_{  L^{\infty}([0,T), L^{2}(\torus^{2}))}\leq\alpha,
\end{equation} where $\alpha$ is a constant not depending on $\epsilon.$
\end{theorem}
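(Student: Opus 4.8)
The plan is to treat \eqref{3.5n} as a singular perturbation in which the fast variable is $\theta=t/\epsilon$, and to read off the limit equation \eqref{ee179n} from the $O(1/\epsilon)$ balance of the Two-Scale ansatz $z^{\epsilon}(t,x)\simeq Z(t,t/\epsilon,x)+\epsilon\,Z_{1}(t,t/\epsilon,x)+\cdots$. Using $\partial_{t}[\phi(t,t/\epsilon,x)]=\epsilon^{-1}\partial_{\theta}\phi+\partial_{t}\phi$ together with the expansions \eqref{3.1n}--\eqref{3.7n1}, the coefficient of $\epsilon^{-1}$ is exactly $\partial_{\theta}Z-\nabla\cdot(\widetilde{\mathcal{A}}\nabla Z)-\nabla\cdot\widetilde{\mathcal{C}}$, whose vanishing is \eqref{ee179n}: a parabolic cell problem in $(\theta,x)$, $1$-periodic in $\theta$, with $t$ as a parameter. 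I would establish the two assertions separately: first the qualitative Two-Scale convergence under the general hypotheses via Nguetseng--Allaire compactness, then the quantitative $O(\epsilon)$ bound under $U_{thr}=0$, which is the assumption that restores uniform ellipticity.

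For the convergence I would first exploit a structural identity: since \eqref{3.6n}--\eqref{3.7n1} make $\widetilde{\mathcal{A}},\widetilde{\mathcal{A}}_{1}$ carry the factor $g_{a}$ and $\widetilde{\mathcal{C}},\widetilde{\mathcal{C}}_{1}$ the factor $g_{c}$ with otherwise identical $(t,\theta,x)$-dependence, one has $\mathcal{C}^{\epsilon}=\mathcal{A}^{\epsilon}V$ with $V=\frac{c\,g_{c}(|\mathcal{U}|)}{a\,g_{a}(|\mathcal{U}|)}\frac{\mathcal{U}}{|\mathcal{U}|}$ bounded by $c/a$, so the flux is $\mathcal{A}^{\epsilon}(\nabla z^{\epsilon}+V)$. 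Testing \eqref{3.5n} with $z^{\epsilon}$ then controls $\epsilon^{-1}\int_{0}^{T}\!\int \mathcal{A}^{\epsilon}|\nabla z^{\epsilon}|^{2}$ by $\epsilon^{-1}\int \mathcal{A}^{\epsilon}|V|^{2}$, while $\int_{\torus^{2}}z^{\epsilon}\,dx=\int_{\torus^{2}}z_{0}\,dx$ is conserved. From the resulting bounds I would extract a Two-Scale limit $Z$, test against an oscillating function $\psi(t,t/\epsilon,x)$, and pass to the limit; the $O(1/\epsilon)$ terms assemble into \eqref{ee179n}, identifying $Z$ with the cell-problem solution. Its spatial mean is $\theta$-independent (torus divergence theorem) and pinned by the conserved mass, while the mean-zero part is uniquely fixed by the periodic parabolic problem.

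For the quantitative part, $U_{thr}=0$ turns the last line of \eqref{eq2n} into $g_{a}\geq G_{thr}$ on all of $\R^{+}$, hence \eqref{EqRajn} with $\widetilde{G}_{thr}=a\,G_{thr}$; moreover $\mathcal{A}^{\epsilon}=a\,g_{a}(|\mathcal{U}|)(1-\epsilon b\mathcal{M})$ with $\mathcal{M}$ bounded, so $\mathcal{A}^{\epsilon}\geq\alpha_{0}>0$ for $\epsilon$ small. Uniform parabolicity makes the cell problems solvable with enough regularity, so I would construct the corrector $Z_{1}$ as the $\theta$-periodic solution of $\partial_{\theta}Z_{1}-\nabla\cdot(\widetilde{\mathcal{A}}\nabla Z_{1})=-\partial_{t}Z+\nabla\cdot(\widetilde{\mathcal{A}}_{1}\nabla Z)+\nabla\cdot\widetilde{\mathcal{C}}_{1}$, whose solvability holds because the spatial average of the right-hand side equals $-\partial_{t}\!\int_{\torus^{2}}Z\,dx=0$ by mass conservation. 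Setting $w^{\epsilon}=z^{\epsilon}-Z^{\epsilon}-\epsilon\,Z_{1}(t,t/\epsilon,x)$ cancels the $\epsilon^{-1}$ and $\epsilon^{0}$ contributions and leaves $\partial_{t}w^{\epsilon}-\frac{1}{\epsilon}\nabla\cdot(\mathcal{A}^{\epsilon}\nabla w^{\epsilon})=-\epsilon\,\rho^{\epsilon}$ with $\rho^{\epsilon}=(\partial_{t}Z_{1}-\nabla\cdot(\widetilde{\mathcal{A}}_{1}\nabla Z_{1}))(t,t/\epsilon,x)$ bounded in $L^{2}(\torus^{2})$. Testing with $w^{\epsilon}$ and discarding the nonnegative diffusive term gives $\frac{1}{2}\frac{d}{dt}\|w^{\epsilon}\|^{2}\leq\epsilon\,\|\rho^{\epsilon}\|\,\|w^{\epsilon}\|$, so Gr\"onwall's inequality yields $\|w^{\epsilon}\|_{L^{\infty}(L^{2})}\leq\|w^{\epsilon}(0)\|+C\epsilon$; together with $\|Z_{1}\|$ bounded this gives $\|z^{\epsilon}-Z^{\epsilon}\|_{L^{\infty}(L^{2})}=O(\epsilon)$, i.e. the stated estimate.

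The hardest point is the a priori estimate in the qualitative part, because when $U_{thr}>0$ the matrix $\widetilde{\mathcal{A}}=a\,g_{a}(|\mathcal{U}|)$ degenerates on $\{|\mathcal{U}|\leq U_{thr}\}$, so the diffusion controls $\nabla z^{\epsilon}$ only where $|\mathcal{U}|\geq U_{thr}$; here I would use \eqref{eq8n}, namely that the coefficients are frozen ($\partial_{t}\mathcal{U}=\nabla\mathcal{U}=0$, etc.) on the degenerate set and that each period contains a full ellipticity interval $[\theta_{\alpha},\theta_{\omega}]$, and recover compactness of the oscillating part by a Poincar\'e-in-$\theta$ argument. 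Two further obstacles are the well-posedness and regularity of the cell problems for $Z$ and $Z_{1}$ (periodic-in-$\theta$ parabolic problems with $t$ as a parameter), since the corrector step needs $Z_{1}$ and $\partial_{t}Z_{1}$ in $L^{2}$, and the matching of initial data, since $Z^{\epsilon}(0,\cdot)=Z(0,0,\cdot)$ need not equal $z_{0}$: to keep $\|w^{\epsilon}(0)\|=O(\epsilon)$ one must assume well-prepared data or add an initial-layer correction.
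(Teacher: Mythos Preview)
Your argument is sound, and the qualitative (Two-Scale convergence) half follows the same Nguetseng--Allaire compactness route as the paper: an $L^{\infty}_{t}L^{2}_{x}$ a priori bound on $z^{\epsilon}$ (for which your identity $\mathcal{C}^{\epsilon}=\mathcal{A}^{\epsilon}V$ with $V$ bounded is exactly the mechanism behind the estimate in \cite{FaFreSeN}), oscillating test functions $\psi(t,t/\epsilon,x)$, and passage to the limit after multiplying by $\epsilon$.

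For the quantitative $O(\epsilon)$ estimate, however, you take a genuinely different path. The paper does \emph{not} introduce a corrector $Z_{1}$. Instead it observes that $u:=\dfrac{z^{\epsilon}-Z^{\epsilon}}{\epsilon}$ itself solves
\[
\partial_{t}u-\frac{1}{\epsilon}\nabla\!\cdot\!\big((\widetilde{\mathcal{A}}^{\epsilon}+\epsilon\widetilde{\mathcal{A}}_{1}^{\epsilon})\nabla u\big)=\frac{1}{\epsilon}\Big(\nabla\!\cdot\widetilde{\mathcal{C}}_{1}^{\epsilon}+(\partial_{t}Z)^{\epsilon}+\nabla\!\cdot(\widetilde{\mathcal{A}}_{1}^{\epsilon}\nabla Z^{\epsilon})\Big),
\]
which has exactly the same singular structure as the original equation \eqref{3.5n}: a $\frac{1}{\epsilon}$-diffusion balancing a $\frac{1}{\epsilon}$-source. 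Because $U_{thr}=0$ gives $\widetilde{\mathcal{A}}\geq\widetilde{G}_{thr}$ and the bracketed source is bounded (after checking that $\partial_{t}Z$ solves a companion parabolic equation with regular data, via \cite{LSU}), the very same energy estimate that bounds $z^{\epsilon}$ applies verbatim to $u$, yielding $\|u\|_{L^{\infty}L^{2}}\leq\alpha$ directly. Your route instead constructs the $\theta$-periodic corrector $Z_{1}$, so that $w^{\epsilon}=z^{\epsilon}-Z^{\epsilon}-\epsilon Z_{1}^{\epsilon}$ satisfies an equation with an $O(\epsilon)$ right-hand side and a plain Gr\"onwall closes. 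What you gain is transparency (the $O(\epsilon)$ is visible in the source) at the cost of solving and estimating another periodic-parabolic cell problem, including control of $\partial_{t}Z_{1}$; the paper trades that extra construction for reusing the existing machinery on a structurally identical equation. Your closing remarks about well-prepared data and initial layers are pertinent: the paper's explicit bound indeed carries the factor $\|z_{0}-Z(0,0,\cdot)\|_{2}$, so the unqualified estimate in the statement implicitly presumes $z_{0}=Z(0,0,\cdot)$, a point you flag correctly.
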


Because of assumptions (\ref{eq2n}) and (\ref{eq8n}),
\begin{gather}
\label{totitotin}
\widetilde{\mathcal{A}},\,\, \widetilde{\mathcal{C}},\,\,
\widetilde{\mathcal{A}}_1,\,\, \widetilde{\mathcal{C}}_1,\,\,
 \widetilde{\mathcal{A}}^\epsilon, \,\,
\widetilde{\mathcal{A}}_1^\epsilon,\,\, \widetilde{\mathcal{C}}^\epsilon, \text{ and }
\widetilde{\mathcal{C}}_1^\epsilon
\text{ are regular and bounded. }
\end{gather}%
%

\section{Two-Scale Numerical Method Building}
\label{SecTSNMB}
In this section, we develop the Two-Scale Numerical Method in order to approach the solution $z^\epsilon$ of (\ref{3.5n}).  The idea is to get a good approximation of $z^\epsilon(t,x)$ seeing Theorem\,\ref{thHomSecHomn} content as $z^\epsilon(t,x)\sim Z(t,\frac{t}{\epsilon},x).$

The strategy is to consider a Fourier expansion of $Z$ solution to (\ref{ee179n}). In this equation, $t$ is only a parameter.

The Fourier expansion of $Z$ is given as follows:


\begin{equation}
\label{F01}
 Z(t,\theta,x)=\sum_{l,m,n}Z_{l,m,n}(t)\,\,e^{2i\pi(l\theta+mx_1+nx_2)},
\end{equation}
where $Z_{l,m,n}(t),\,\,l=0,1,2,\ldots$, $m=0,1,2,\ldots$, $n=0,1,2,\ldots,$ are the unknown complex coefficients of the Fourier expansion of $Z.$
Using (\ref{F01}), the Fourier expansion of $\frac{\partial Z}{\partial \theta}$ is given by
\begin{equation}\label{F011}
 \frac{\partial Z}{\partial \theta}(t,\theta,x)=\sum_{l,m,n}2i\pi\,l\,Z_{l,m,n}(t)\,\,e^{2i\pi(l\theta+mx_1+nx_2)}.
\end{equation}
To obtain the system satisfied by the Fourier expansion (\ref{F01}) of $Z$, it is necessary to compute the Fourier expansions of
$\nabla\cdot(\widetilde{\mathcal{A}}\nabla Z)$ and $\nabla\cdot\widetilde{\mathcal{C}}.$
As  $\nabla\cdot(\widetilde{\mathcal{A}}\nabla Z)= \nabla\widetilde{\mathcal{A}}\cdot\nabla Z+\widetilde{\mathcal{A}}\cdot\Delta Z,$
let
\begin{equation}\label{F012}
\sum_{l,m,n}\widetilde{\mathcal{A}}_{l,m,n}(t)\,e^{2i\pi(l\theta+mx_1+nx_2)},
\end{equation}
and
\begin{equation}\label{F013}
\sum_{l,m,n}\widetilde{\mathcal{A}}_{l,m,n}^{grad}(t)\,e^{2i\pi(l\theta+mx_1+nx_2)},
\end{equation}
 be respectively the Fourier expansions of $\widetilde{\mathcal{A}}\,\,\text{and}\,\,\nabla\widetilde{\mathcal{A}},$
  where $\widetilde{\mathcal{A}}_{l,m,n}^{grad}(t)=2i\pi\widetilde{\mathcal{A}}_{l,m,n}\left(\begin{array}{ccc} m\\
 n\end{array}\right)$
and  then the Fourier expansions of $\nabla Z$ and $\Delta Z$ are respectively given by
\begin{equation}\label{F014}
\sum_{l,m,n}2i\pi\left(\begin{array}{ccc}m\\
n\end{array}\right)Z_{l,m,n}(t)\,e^{2i\pi(l\theta+mx_1+nx_2)},
\end{equation}
and
\begin{equation}\label{F015}
-\sum_{l,m,n}4\pi^2(m^2+n^2)Z_{l,m,n}(t)\,e^{2i\pi(l\theta+mx_1+nx_2)}.
\end{equation}
In the same way the Fourier expansion of  $\nabla\cdot\widetilde{\mathcal{C}}$ is given by
\begin{equation}
\label{201308150932}  
\sum_{l,m,n}\widetilde{\mathcal{C}}_{l,m,n}e^{2i\pi(l\theta+mx_1+nx_2)}.
\end{equation}
Using (\ref{F01}), (\ref{F011}), (\ref{F012}), (\ref{F013}), (\ref{F014}), (\ref{F015}) and (\ref{201308150932}), equation (\ref{ee179n}) becomes
\begin{equation}
\sum_{l,m,n}2i\pi\,l\,Z_{l,m,n}(t)\,\,e^{2i\pi(l\theta+mx_1+nx_2)} $$
$$-\Big(\sum_{l,m,n}\widetilde{\mathcal{A}}_{l,m,n}^{grad}(t)\,e^{2i\pi(l\theta+mx_1+nx_2)}\Big)\cdot\Big(\sum_{l,m,n}2i\pi\left(\begin{array}{ccc}m\\
n\end{array}\right)Z_{l,m,n}(t)\,e^{2i\pi(l\theta+mx_1+nx_2)}\Big)$$
$$+\Big(\sum_{l,m,n}\widetilde{\mathcal{A}}_{l,m,n}(t)\,e^{2i\pi(l\theta+mx_1+nx_2)}\Big)\Big( \sum_{l,m,n}4\pi^2(m^2+n^2)Z_{l,m,n}(t)\,e^{2i\pi(l\theta+mx_1+nx_2)}\Big)=$$
$$\sum_{l,m,n}\widetilde{\mathcal{C}}_{l,m,n}(t)\,e^{2i\pi(l\theta+mx_1+nx_2)},
\end{equation}
which gives after identification, the following algebraic system for $(Z_{l,m,n})$:
\begin{eqnarray}\label{Fa02}
{2i\pi\,l\,Z_{l,m,n}(t)-\sum_{i,j,k}2i\pi\widetilde{\mathcal{A}}_{i,j,k}^{grad}(t)\cdot\left(\begin{array}{ccc}m-j\\
n-k\end{array}\right)Z_{l-i,m-j,n-k}(t) {} }~~~~~~~~
\nonumber\\
+4\pi^2\sum_{i,j,k}\widetilde{\mathcal{A}}_{i,j,k}(t)((m-j)^2+(n-k)^2)Z_{l-i,m-j,n-k}(t)= \widetilde{\mathcal{C}}_{l,m,n}(t).
\end{eqnarray}
In formula (\ref{F01}), the integers $m, n$ and $l$ vary from $-\infty$ to $+\infty.$ But in practice, we will consider the truncated
Fourier series of order $P\in\mathbb N$ defined by
\begin{equation}\label{Fa01}  
 Z_P(t,\theta,x)=\sum_{0\leq l\leq P,0\leq m\leq P,0\leq n\leq P}Z_{l,m,n}(t)\,\,e^{2i\pi(l\theta+mx_1+nx_2)}.
\end{equation}
Using (\ref{Fa01}), formula (\ref{Fa02}) becomes:
\begin{eqnarray}
{2i\pi\,l\,Z_{l,m,n}(t)-\sum_{0\leq i\leq P,\,1\leq j\leq P,\,\,0\leq k\leq P}2i\pi\widetilde{\mathcal{A}}_{i,j,k}^{grad}(t)\cdot\left(\begin{array}{ccc}m-j\\
n-k\end{array}\right)Z_{l-i,m-j,n-k}(t) {} }~~~~~~~~
\nonumber\\
+4\pi^2\sum_{0\leq i\leq P,\,\,0\leq j\leq P,\,\,0\leq k\leq P}\widetilde{\mathcal{A}}_{i,j,k}(t)((m-j)^2+(n-k)^2)Z_{l-i,m-j,n-k}(t)= \widetilde{\mathcal{C}}_{l,m,n}(t).
\end{eqnarray}
\section{Convergence result}
\begin{proof} of Theorem\,\ref{thHomSecHomn}. For self-containedness, we recall the proof of Theorem\,\ref{thHomSecHomn}. Firstly, we obtain an estimate leading to that $z^\epsilon$ is bounded in $L^\infty((0,T);L^2(\torus^2)).$ Secondly,
defining test function $\psi^{\epsilon}(t,x)=\psi(t,\frac{t}{\epsilon},x)$ for any $\psi(t,\theta,x)$,
regular with a compact support over $[0,T)\times\torus^{2}$ and 1-periodic in $\theta,$ 
multiplying (\ref{3.5n}) by $\psi^{\epsilon}$  and integrating over $[0,T)\times\torus^{2}$ gives
\begin{equation}\label{H3}\int_{\torus^{2}}\int_{0}^{T}\frac{\partial z^{\epsilon}}{\partial t}\psi^{\epsilon}dtdx-
    \frac{1}{\epsilon}\int_{\torus^{2}}\int_{0}^{T}\nabla\cdot(\mathcal{A}^{\epsilon}\nabla z^{\epsilon})\psi^{\epsilon}dtdx=\frac{1}{\epsilon}\int_{\torus^{2}}\int_{0}^{T}\nabla\cdot\mathcal{C}^{\epsilon}\psi^{\epsilon}dtdx.
\end{equation}
Then integrating by parts in the first integral over $[0,T)$ and using the Green formula in $\torus^{2}$ in the second integral  we have
\begin{eqnarray}\label{H4}
    \lefteqn{\hspace{0.80cm}-\int_{\torus^{2}}z_{0}(x)\psi(0,0,x)dx-\int_{\torus^{2}}\int_{0}^{T}\frac{\partial\psi^{\epsilon}}{\partial t}z^{\epsilon}dtdx {} }
                       \nonumber\\
    & & {}+\frac{1}{\epsilon}\int_{\torus^{2}}\int_{0}^{T}\mathcal{A}^{\epsilon}\nabla z^{\epsilon}\nabla\psi^{\epsilon}dtdx=\frac{1}{\epsilon}\int_{\torus^{2}}\int_{0}^{T}
    \nabla\cdot\mathcal{C}^{\epsilon}\psi^{\epsilon}dtdx.
\end{eqnarray}
Again using the Green formula in the third integral we obtain
\begin{eqnarray}\label{H4.111}
    \lefteqn{\hspace{0.80cm}-\int_{\torus^{2}}z_{0}(x)\psi(0,0,x)\,dx-\int_{\torus^{2}}\int_{0}^{T}\frac{\partial\psi^{\epsilon}}{\partial t}z^{\epsilon}dtdx {} }
                       \nonumber\\
    & & {}-\frac{1}{\epsilon}\int_{\torus^{2}}\int_{0}^{T}z^{\epsilon}\,\nabla\cdot(\mathcal{A}^{\epsilon}\nabla\psi^{\epsilon})\,dtdx=
    \frac{1}{\epsilon}\int_{\torus^{2}}\int_{0}^{T}
    \nabla\cdot\mathcal{C}^{\epsilon}\psi^{\epsilon}dtdx.
\end{eqnarray}
But
\begin{equation}\label{H5}
 \frac{\partial \psi^{\epsilon}}{\partial t}
 =\left(\frac{\partial \psi}{\partial t}\right)^{\epsilon}
 +\frac{1}{\epsilon}\left(\frac{\partial\psi}{\partial \theta}\right)^{\epsilon},
\end{equation}
where
\begin{gather}\label{H4a}
   \left(\frac{\partial\psi}{\partial t}\right)^{\epsilon}(t,x)
   =\frac{\partial\psi}{\partial t}(t,\frac{t}{\epsilon},x)\,\,\textrm{ and }
     \left(\frac{\partial\psi}{\partial\theta}\right)^{\epsilon}(t,x)=\frac{\partial\psi}{\partial\theta}(t,\frac{t}{\epsilon},x),
\end{gather}
then we have
\begin{eqnarray}\label{H6s}
    \lefteqn{\hspace{0.80cm} \int_{\torus^{2}}\int_{0}^{T}z^{\epsilon}\left( \left(\frac{\partial \psi}{\partial t}\right)^{\epsilon}+\frac{1}{\epsilon}\left(\frac{\partial\psi}{\partial \theta}\right)^{\epsilon}+\frac{1}{\epsilon}\nabla\cdot(\mathcal{A}^{\epsilon}\nabla\psi^{\epsilon})\right)dxdt {}}~~~~~~~~
    \nonumber\\
    & &+
\frac{1}{\epsilon}\int_{\torus^{2}}\int_{0}^{T}
    \nabla\cdot\mathcal{C}^{\epsilon}\psi^{\epsilon}dtdx=-\int_{\torus^{2}}z_{0}(x)\psi(0,0,x)\,dx.
    \end{eqnarray}
\\
Using the Two-Scale convergence due to Nguetseng \cite{nguetseng:1989} and
Allaire \cite{allaire:1992} (see also Fr\'enod Raviart and Sonnendr\"{u}cker \cite{FRS:1999}),  since $z^\epsilon$ is bounded in $L^\infty([0,T),L^{2}(\torus^{2})),$  there exists a profile
$Z(t,\theta,x)$, periodic of period 1 with respect to $\theta$, such that for all
$\psi(t,\theta,x),$ regular with a compact support with respect to $(t,x)$ and  1-periodic with respect to $\theta$, we have
 \begin{equation}\label{H7}
    \int_{\torus^{2}}\int_{0}^{T}z^{\epsilon}\psi^{\epsilon}dtdx
    \longrightarrow\int_{\torus^{2}}\int_{0}^{T}\int_{0}^{1}Z\psi \,d\theta dtdx,\,\,\text{as}\,\,\epsilon\,\,\text{tends to zero},
 \end{equation}
 for a subsequence extracted from $(z^{\epsilon})$.
 \\
Multiplying (\ref{H6s}) by $\epsilon,$ passing to the limit as $\epsilon\rightarrow 0$ and using (\ref{H7})
we have
    \begin{equation}\label{H8}
\int_{\torus^{2}}\int_{0}^{T}\int_{0}^{1}Z\frac{\partial \psi}{\partial\theta}\,d\theta dtdx+\lim_{\epsilon\rightarrow0}\int_{\torus^{2}}\int_{0}^{T}z^{\epsilon}\nabla\cdot(\mathcal{A}^{\epsilon}\nabla\psi^{\epsilon})\,dtdx
 =\lim_{\epsilon\rightarrow 0}\int_{\torus^{2}}\int_{0}^{T}\mathcal{C}^{\epsilon}\cdot\nabla\psi^{\epsilon} dtdx,
    \end{equation}
    for an extracted subsequence. As $\mathcal{A}^{\epsilon}$ and $\mathcal{C}^{\epsilon}$ are bounded   and $\psi^{\epsilon}$ is a regular function, $\mathcal{A}^{\epsilon}\nabla\psi^{\epsilon}$ and $\nabla\psi^{\epsilon}$ can be considered as test functions.
 Using (\ref{H7}) we have
\begin{equation}
   \int_{\torus^{2}}\int_{0}^{T}z^{\epsilon}\,\nabla\cdot(\mathcal{A}^{\epsilon}\nabla\psi^{\epsilon})dtdx\longrightarrow
    \int_{\torus^{2}}\int_{0}^{T}\int_{0}^{1}Z\nabla\cdot(\widetilde{\mathcal{A}}\nabla\psi)\,d\theta dtdx,
\end{equation}
and
\begin{equation}\label{H9}
\int_{\torus^{2}}\int_{0}^{T}\mathcal{C}^{\epsilon}\cdot\nabla\psi^{\epsilon}dtdx
\,\,\textrm{Two-Scale converges to} \,\,\int_{\torus^{2}}\int_{0}^{T}\int_{0}^{1}\widetilde{\mathcal{C}}\cdot\nabla\psi \,d\theta dtdx.
\end{equation}
Passing to the limit as $\epsilon\rightarrow 0$ we obtain from (\ref{H8}) a weak formulation of the equation (\ref{ee179n}) satisfied by $Z$.

Using (\ref{3.1n}) and (\ref{3.3n}) equation (\ref{3.5n}) becomes
\begin{equation}\label{eq5.1}\frac{\partial z^{\epsilon}}{\partial t}-\frac{1}{\epsilon}\nabla\cdot(\widetilde{\mathcal{A}}^{\epsilon}\nabla z^{\epsilon})=\frac{1}{\epsilon}\nabla\cdot\widetilde{\mathcal{C}}^{\epsilon}+\nabla\cdot(\widetilde{\mathcal{A}}_{1}^{\epsilon}\nabla z^{\epsilon})+\nabla\cdot\widetilde{\mathcal{C}}_{1}^{\epsilon}.
\end{equation}
For $Z^{\epsilon}$, we have
\begin{equation}\frac{\partial Z^{\epsilon}}{\partial t}=\left(\frac{\partial Z}{\partial t}\right)^{\epsilon}+\frac{1}{\epsilon}\left(\frac{\partial Z}{\partial \theta}\right)^{\epsilon},
\end{equation}
where
\begin{gather}\label{H4acopie}
   \left(\frac{\partial Z}{\partial t}\right)^{\epsilon}(t,x)
   =\frac{\partial Z}{\partial t}(t,\frac{t}{\epsilon},x)\,\,\textrm{ and }
     \left(\frac{\partial Z}{\partial\theta}\right)^{\epsilon}(t,x)=\frac{\partial Z}{\partial\theta}(t,\frac{t}{\epsilon},x).
\end{gather}
Using (\ref{ee179n}), $Z^{\epsilon}$ is solution to
\begin{equation}\label{eq5.2}
\frac{\partial Z^{\epsilon}}{\partial t}-\frac{1}{\epsilon}\nabla\cdot\left(\widetilde{\mathcal{A}}^{\epsilon}\nabla Z^{\epsilon}\right)=\frac{1}{\epsilon}\nabla\cdot\widetilde{\mathcal{C}}^{\epsilon}+\left(\frac{\partial Z}{\partial t}\right)^{\epsilon}.
\end{equation}
Formulas (\ref{eq5.1}) and (\ref{eq5.2}) give
\begin{equation}\label{eq5.3}
\frac{\partial (z^{\epsilon}-Z^{\epsilon})}{\partial t}-\frac{1}{\epsilon}\nabla\cdot\left(\widetilde{\mathcal{A}}^{\epsilon}\nabla (z^{\epsilon}- Z^{\epsilon})\right)=\nabla\cdot\widetilde{\mathcal{C}}_{1}^{\epsilon}+\left(\frac{\partial Z}{\partial t}\right)^{\epsilon}+\nabla\cdot(\widetilde{\mathcal{A}}_{1}^{\epsilon}\nabla z^{\epsilon}).
\end{equation}
Multiplying equation (\ref{eq5.3}) by $\frac{1}{\epsilon}$ and using the fact that $z^{\epsilon}=z^{\epsilon}-Z^{\epsilon}+Z^{\epsilon}$ in the right hand side of equation (\ref{eq5.3})$,\frac{z^{\epsilon}-Z^{\epsilon}}{\epsilon}$ is solution to:
\begin{equation}\label{eq5.4n}\frac{\ds\partial \Big(\frac{z^{\epsilon}-Z^{\epsilon}}{\epsilon}\Big)}{\partial t}-\frac{1}{\epsilon}\nabla\cdot\Big((\widetilde{\mathcal{A}}^{\epsilon}+\epsilon\widetilde{\mathcal{A}}_{1}^{\epsilon})\nabla (\frac{z^{\epsilon}- Z^{\epsilon}}{\epsilon})\Big)
=\frac{1}{\epsilon}\Big(\nabla\cdot\widetilde{\mathcal{C}}_{1}^{\epsilon}+(\frac{\partial Z}{\partial t})^{\epsilon}+\nabla\cdot(\widetilde{\mathcal{A}}_{1}^{\epsilon}\nabla Z^{\epsilon})\Big).
\end{equation}
Our aim here is to prove that $\frac{z^{\epsilon}-Z^{\epsilon}}{\epsilon}$ is bounded by a constant $\alpha$ not depending on $\epsilon.$ For this let us use that
$\widetilde{\mathcal{A}}^{\epsilon},\,\,\widetilde{\mathcal{A}}_{1}^{\epsilon},\,\,
\widetilde{\mathcal{C}}^{\epsilon}\,\,\textrm{and}\,\,\widetilde{\mathcal{C}}_{1}^{\epsilon}$ are regular and bounded coefficients (see (\ref{totitotin})) and that $\widetilde{\mathcal{A}}^{\epsilon}\geq G_{thr}$
(see (\ref{EqRajn})).
Hence, $\nabla\cdot\widetilde{\mathcal{C}}_{1}^{\epsilon}$ is bounded, $\nabla\cdot(\widetilde{\mathcal{A}}_{1}^{\epsilon}\nabla Z^{\epsilon})$ is also bounded. Since $Z^{\epsilon}$ is solution to (\ref{eq5.2}),
$\frac{\partial Z}{\partial t}$ satisfies the following equation
\begin{equation}\label{eq5.5a}\frac{\ds\partial \left(\frac{\partial Z}{\partial t}\right)}{\partial\theta}-\nabla\cdot\left(\widetilde{\mathcal{A}}\nabla\frac{\partial Z}{\partial t}\right)=\frac{\partial\nabla\cdot\widetilde{\mathcal{C}}}{\partial t}+\nabla\cdot\left(\frac{\partial \widetilde{\mathcal{A}}}{\partial t}\nabla Z\right).
\end{equation}
Equation (\ref{eq5.5a}) is linear with regular and bounded coefficients. Using a result of Ladyzenskaja, Solonnikov and Ural'Ceva \cite{LSU}, $\frac{\partial Z}{\partial t}$ is regular
and bounded and so the coefficients of equations (\ref{eq5.4n}) are regular and bounded.
Then, using the same arguments as in the proof of Theorem 1.1 in    \,\cite{FaFreSeN} we obtain that
$\left(\frac{z^{\epsilon}-Z^{\epsilon}}{\epsilon}\right)$  is bounded.\\
To determine the value of the constant $\alpha$, we proceed in the same way as in the proof of Theorem 3.16 of \cite{FaFreSeN}.
Since the coefficients $\Big(\widetilde{\mathcal{A}}^{\epsilon},\,\,\widetilde{\mathcal{A}}_{1}^{\epsilon},\,\,
\widetilde{\mathcal{C}}^{\epsilon}\,\,\textrm{and}\,\,\widetilde{\mathcal{C}}_{1}^{\epsilon},\,\,\nabla\cdot\widetilde{\mathcal{C}}_{1}^{\epsilon},\,\,\nabla\cdot(\widetilde{\mathcal{A}}_{1}^{\epsilon}\nabla Z^{\epsilon}),\,\,\text{and}\,\,\frac{\partial Z}{\partial t}\Big)$ are bounded by constants, let $\beta$ denotes the maximum between all these constants. Then we use the same argument as in the proof of Theorems 1.1 and 3.16 and we get:
\begin{equation}\label{CVR1}
\Big\|\frac{z^{\epsilon}-Z^{\epsilon}}{\epsilon}\Big\|_{L^\infty([0,T),L^2(\torus^2))}\leq \|z_0(\cdot)-Z(0,0,\cdot)\|_2\sqrt{\frac{\beta+\beta^3}{\sqrt{\widetilde{G}_{thr}}}+2\beta}\;T.
\end{equation}
\end{proof}
\begin{theorem}\label{therem3.1}
Let $\epsilon$ be a positive real, $z^\epsilon$ be the solution to (\ref{3.5n}),  $Z_P$ be the truncated Fourier series (defined by (\ref{Fa01})) of $Z$
solution to (\ref{ee179n}) and $Z_P^\epsilon$ defined by $Z_P^\epsilon(t,x)=Z_P(t,\frac t\epsilon,x)$. Then,
under assumptions (\ref{eq2n}), (\ref{eq8n}) and (\ref{201308151611}),
 $z^\epsilon-Z_P^\epsilon$ satisfies the following estimate:
\begin{equation} \label{CVR2}
\|z^\epsilon-Z_P^\epsilon\|_{L^\infty([0,T),L^2(\torus^2))}\leq \epsilon\|z_0(\cdot)-Z(0,0,\cdot)\|_2\sqrt{\frac{\beta+\beta^3}{\sqrt{\widetilde{G}_{thr}}}+2\beta}\;T+f(P),
\end{equation}
where $f$ is a non-negative function of $P$ not depending on $\epsilon$ and satisfying $\lim_{P\rightarrow+\infty}f(P)=0.$
\end{theorem}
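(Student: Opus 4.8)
The plan is to interpose the profile-based approximation $Z^\epsilon$ between $z^\epsilon$ and $Z_P^\epsilon$ and split by the triangle inequality,
\[
\|z^\epsilon-Z_P^\epsilon\|_{L^\infty([0,T),L^2(\torus^2))}\leq \|z^\epsilon-Z^\epsilon\|_{L^\infty([0,T),L^2(\torus^2))}+\|Z^\epsilon-Z_P^\epsilon\|_{L^\infty([0,T),L^2(\torus^2))},
\]
treating the two pieces by completely different mechanisms: the first by the already-established homogenization estimate, the second by Fourier truncation error.

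For the first piece I would simply invoke Theorem \ref{thHomSecHomn}. Under assumption (\ref{201308151611}), estimate (\ref{CVR1}) bounds $\|(z^\epsilon-Z^\epsilon)/\epsilon\|$ by $\|z_0(\cdot)-Z(0,0,\cdot)\|_2\sqrt{(\beta+\beta^3)/\sqrt{\widetilde G_{thr}}+2\beta}\,T$, so multiplying through by $\epsilon$ reproduces verbatim the first summand on the right-hand side of (\ref{CVR2}). No new work is needed here, beyond recording that (\ref{EqRajn}), and hence (\ref{CVR1}), require precisely the supplementary hypothesis (\ref{201308151611}) appearing in the statement.

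For the second piece, the key observation is that $Z^\epsilon-Z_P^\epsilon$ is just the Fourier tail $Z-Z_P$ evaluated at the single point $\theta=t/\epsilon$; since that point is one particular value of $\theta$, the norm is dominated by the supremum over all $\theta$,
\[
\|Z^\epsilon-Z_P^\epsilon\|_{L^\infty([0,T),L^2(\torus^2))}\leq \sup_{t\in[0,T]}\;\sup_{\theta\in\R}\,\|(Z-Z_P)(t,\theta,\cdot)\|_{L^2(\torus^2)}=:f(P),
\]
a quantity manifestly independent of $\epsilon$ and non-negative, which is exactly the role $f(P)$ plays in (\ref{CVR2}). It then remains to show $f(P)\to 0$ as $P\to\infty$, and for this I would upgrade the mere boundedness of $Z$ furnished by Theorem \ref{thHomSecHomn} to genuine smoothness: differentiating (\ref{ee179n}) in $\theta$ and $x$ and applying the parabolic regularity result of \cite{LSU} as in the derivation of (\ref{eq5.5a}), using that the coefficients $\widetilde{\mathcal{A}},\widetilde{\mathcal{C}}$ are regular and bounded by (\ref{totitotin}), yields that $Z(t,\cdot,\cdot)$ is smooth in $(\theta,x)$ uniformly in $t$. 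Hence the coefficients $Z_{l,m,n}(t)$ decay faster than any power of $(1+|l|+|m|+|n|)$ uniformly in $t$, and the tail sum defining $f(P)$ tends to zero.

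The main obstacle is precisely this last step: I must obtain \emph{uniform} convergence of the $\theta$-Fourier series, not merely $L^2_\theta$ convergence, because the argument $\theta=t/\epsilon$ sweeps across all of $\R$ as $\epsilon$ varies and a bound at a single value of $\theta$ cannot be recovered from an $L^2_\theta$ estimate. Securing enough joint $(\theta,x)$-regularity of $Z$ to force this uniform decay of the tail—while only needing $f(P)\to0$ and not any explicit rate—is the crux of the argument; everything else is bookkeeping of the triangle inequality and a direct appeal to (\ref{CVR1}).
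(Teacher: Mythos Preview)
Your proposal is correct and follows essentially the same route as the paper: the paper also interposes $Z^\epsilon$, applies the triangle inequality, bounds $\|z^\epsilon-Z^\epsilon\|$ directly by (\ref{CVR1}), and controls $\|Z^\epsilon-Z_P^\epsilon\|$ by the $L^\infty_t L^\infty_\theta L^2_x$ norm of the Fourier remainder $Z-Z_P$, invoking the regularity of $Z$ (as solution of (\ref{ee179n}) with regular coefficients) to obtain $f(P)\to 0$. Your added remark that uniform-in-$\theta$ control is essential because $\theta=t/\epsilon$ sweeps all values is a welcome clarification the paper leaves implicit.
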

\begin{proof} We can write\,:
\begin{eqnarray}\label{CVR3}  
{\|z^\epsilon-Z_P^\epsilon\|_{L^\infty([0,T),L^2(\torus^2))}=
\|z^\epsilon-Z^\epsilon+Z^\epsilon-Z_p^\epsilon\|_{L^\infty([0,T),L^2(\torus^2))}{} }~~~~~~~~
\nonumber\\
\leq
\|z^\epsilon-Z^\epsilon\|_{L^\infty([0,T),L^2(\torus^2))}+\|Z^\epsilon-Z_p^\epsilon\|_{L^\infty([0,T),L^2(\torus^2))}.
\end{eqnarray}
Using (\ref{CVR1}), the first term in the right hand side of (\ref{CVR3}) is bounded by
\begin{equation}\label{CVR4}
\|z^\epsilon-Z^\epsilon\|_{L^\infty([0,T),L^2(\torus^2))}\leq\epsilon\|z_0(\cdot)-Z(0,0,\cdot)\|_2\sqrt{
\frac{\beta+\beta^3}{\sqrt{\widetilde{G}_{thr}}}+2\beta}T.
\end{equation}
For the second term of (\ref{CVR3}), using classical results of Fourier series theory, since $Z-Z_P$  is nothing but the rest of the
Fourier series of order $P$ of $Z$ and since $Z$ is regular (because it is the solution of (\ref{ee179n}) which has regular coefficients), the non-negative function  $f$ satisfying $\lim_{P\rightarrow+\infty}f(P)=0$ such that
\begin{gather}
\label{201308151641}  
\|Z-Z_p\|_{L^{\infty}([0,T],L^{\infty}_\#(\R,L^2(\torus^{2})))} \leq f(P),
\end{gather}
exists. From this last inequality,
\begin{equation}\label{CVR5}
\|Z^\epsilon-Z_p^\epsilon\|_{L^\infty([0,T),L^2(\torus^2))}\leq f(P),
\end{equation}
follows
and coupling this with (\ref{CVR4}) and (\ref{CVR3})  gives inequality (\ref{CVR2}).
\end{proof}
\section{Numerical illustration of Theorem\,\ref{therem3.1}.}
\subsection{Reference solution}\label{grand}
Having Fourier coefficients of $Z$ on hand, we will do the same for function $z^\epsilon(t,x)$ solution to (\ref{3.5n}) in order to compare it to the profile $Z$ for a given $\epsilon,$ in a fixed time.
The Fourier expansion of $z^{\epsilon}$ is given by
\begin{equation}\label{eqalgeb0}
z^{\epsilon}(t,x_1,x_2)=\sum_{m,n}z_{m,n}(t)\,\,e^{2\pi i(mx_1+nx_2)},
\end{equation} where $m=0,1,2,\dots$ and $n=0,1,2,\ldots,$
then the Fourier expansion of $\frac{\partial z^\epsilon}{\partial t}$ is
\begin{equation}
\frac{\partial z^\epsilon}{\partial t}=\sum_{m,n}\dot{z}_{m,n}(t)\,\,e^{2\pi i(mx_1+nx_2)}.
\end{equation}
Using the same idea as in the Fourier expansion of $Z,$ we obtain the following infinite system of Ordinary Differential Equations
\begin{eqnarray}\label{geb0}
{\frac{\partial z_{m,n}}{\partial t}(t)-\frac{1}{\epsilon}\sum_{i,j}2i\pi\mathcal{A}_{i,j}^{grad}(t)\cdot\left(\begin{array}{ccc}m-i\\
n-j\end{array}\right)z_{m-i,n-j}(t){} }~~~~~~~~
\nonumber\\
\hspace{1,5cm}+\frac{1}{\epsilon}4\pi^2\sum_{i,j}\mathcal{A}_{i,j}(t)((m-i)^2+(n-j)^2)z_{m-i,n-j}(t)=\frac{1}{\epsilon}\mathcal{C}_{m,n}(t),
\end{eqnarray}
where $\mathcal{A}_{i,j}^{grad}(t),\,\, \mathcal{A}_{i,j}(t)\,\,\text{and}\,\, \mathcal{C}_{m,n}(t)$ are respectively the Fourier coefficients of $\nabla\mathcal{A}^\epsilon,\,\,\mathcal{A}^\epsilon$ and $\nabla\cdot\mathcal{C}^\epsilon.$\\
In the same way, the truncated Fourier series of order $P\in \N$ of $z^\epsilon$ is  given by
\begin{equation}
z^{\epsilon}_{P}(t,x_1,x_2)=\sum_{m,n=0}^{P}z_{m,n}(t)\,\,e^{2\pi i(mx_1+nx_2)},
\end{equation}
which gives from (\ref{geb0}) the following system Ordinary Differential Equations
\begin{eqnarray}\label{algeb}
{\frac{\partial z_{m,n}}{\partial t}(t)-\frac{1}{\epsilon}\sum_{i,j=0}^{P}2i\pi\mathcal{A}_{i,j}^{grad}(t)\cdot\left(\begin{array}{ccc}m-i\\
n-j\end{array}\right)z_{m-i,n-j}(t){} }~~~~~~~~
\nonumber\\
+\frac{1}{\epsilon}4\pi^2\sum_{i,j=0}^{P}\mathcal{A}_{i,j}(t)((m-i)^2+(n-j)^2)z_{m-i,n-j}(t)=\frac{1}{\epsilon}\mathcal{C}_{m,n}(t).
\end{eqnarray}
In (\ref{algeb}), we will use an initial condition $z_{m,n}(0,x).$ To solve (\ref{algeb}) we use, for the discretization in time, a Runge-Kutta method (ode45).
\subsection{Comparison Two-Scale Numerical Solution and reference solution}
In this paragraph, we consider the truncated solution $z^\epsilon_{P}(t,x_1,x_2)$ and $Z_P(t,\frac{t}{\epsilon},x_1,x_2).$
The objective here is to compare for a fixed $\epsilon$ and a given time, the quantity $|z^\epsilon_{P}(t,x_1,x_2)-Z_P(t,\frac{t}{\epsilon},x_1,x_2)|$ when the water velocity $\mathcal U$ is given.

\subsubsection{Comparisons of $z_{P}^\epsilon(t,x)$ and $Z_P(t,\frac{t}{\epsilon},x)$ with  $\mathcal{U}$ given by (\ref{wat0}).\label{sectionavant}}
For the numerical simulations, concerning $z^{\epsilon},\,$ we take $z_0(x_1,x_2)=\,\cos2\pi x_1 +\,\,\cos4\pi x_1 $  and $z_0(x_1,x_2)=Z(0,0,x_1,x_2)$.
In what concerns the water velocity field, we consider the function
\begin{equation}\label{wat0}
\mathcal{U}(t,\theta,x_1,x_2)=\sin\pi x_1\sin2\pi\theta\,\textbf e_1,
\end{equation}
where $\textbf e_1$ and $\textbf{e}_2$ are respectively
the first and the second vector
of the canonical basis of $\mathbb{R}^{2}$ and $x_1,\,\,x_2$ are the first and the second components of $x.$\\
In  Figure \ref{UA2x0} , we can see the space distribution of the first component of the velocity $\mathcal U$ for a given time $t=1$ and for various
values of $\theta$: $0.3$, $0.55$, and $0.7$.
\begin{figure}[htbp]
\begin{center}
\includegraphics[angle=0, width=5cm]{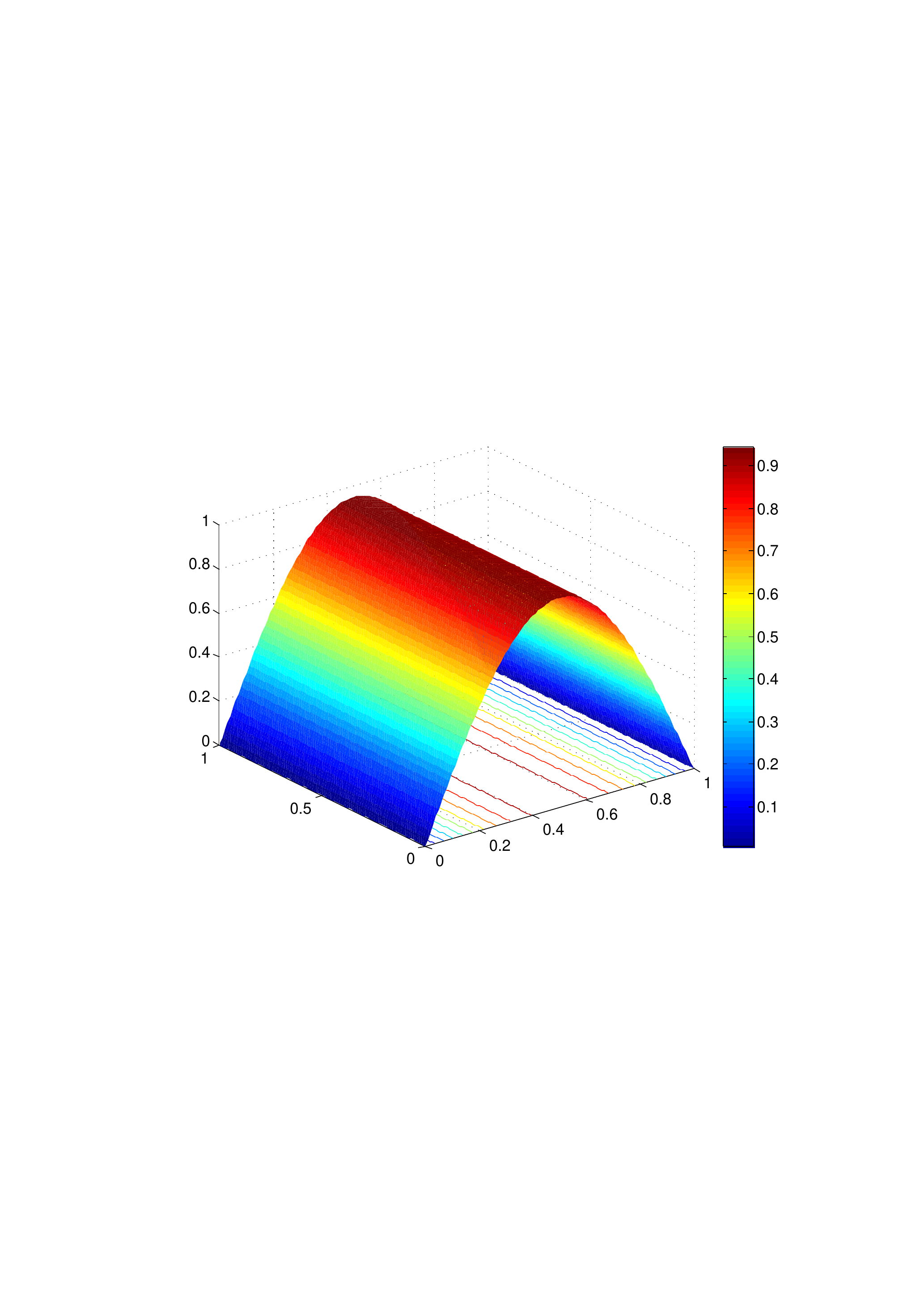}\includegraphics[angle=0, width=5cm]{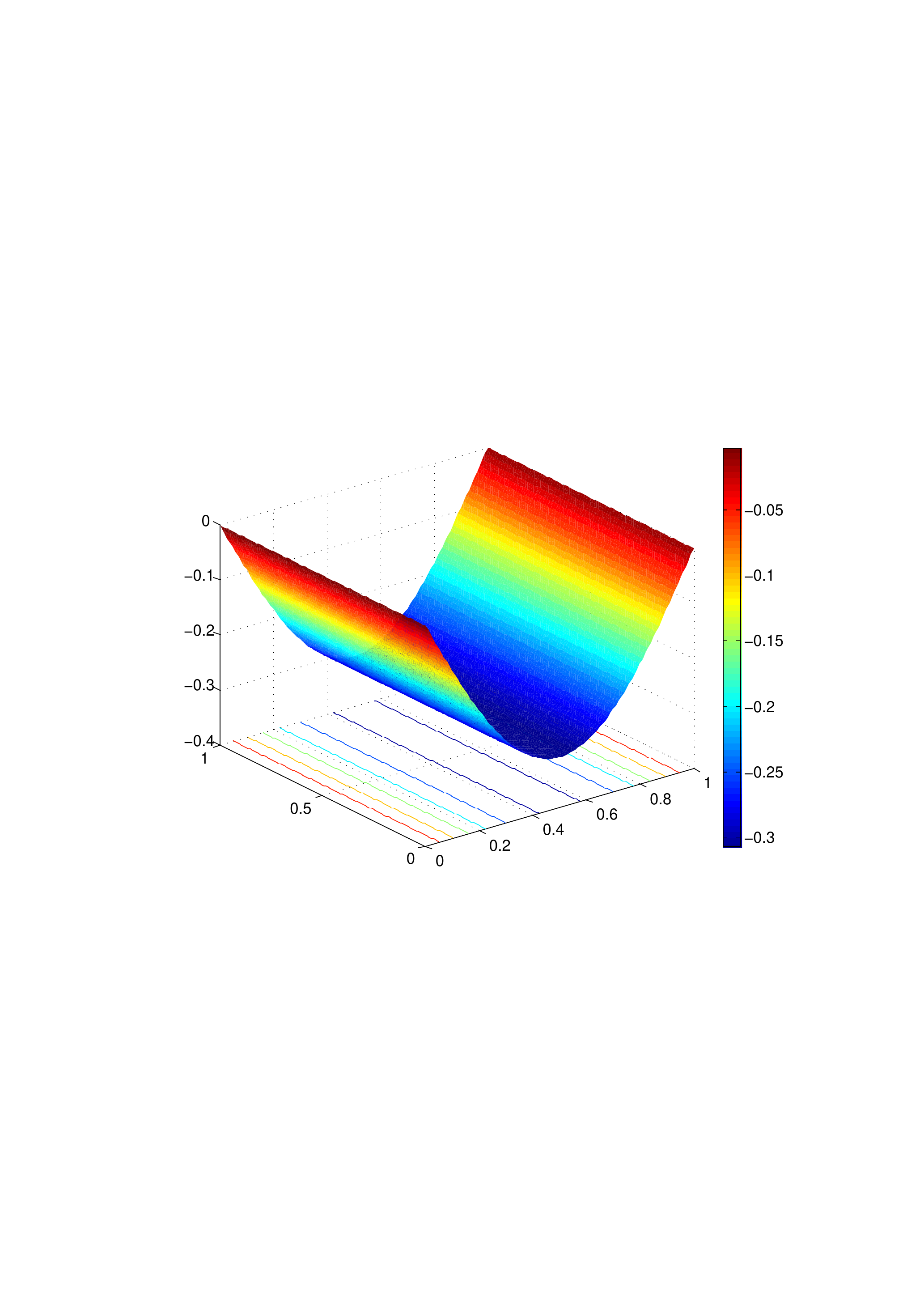}\\
\includegraphics[angle=0, width=5cm]{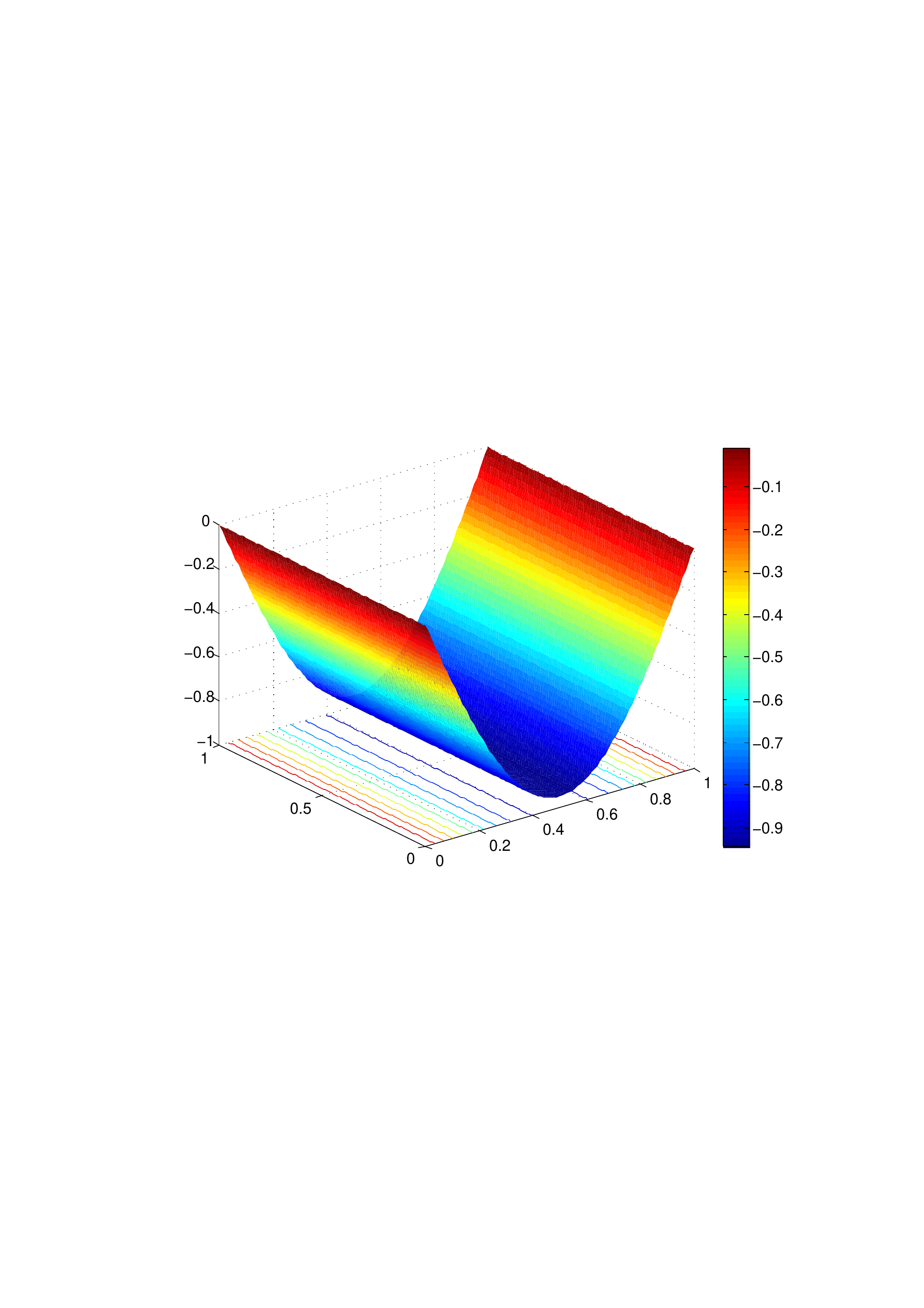}
\end{center}
\caption{Space distribution of the first component of $\mathcal{U}(1, 0.3,(x_1,x_2))$, $\mathcal{U}(1, 0.55,(x_1,x_2))$
and $\mathcal{U}(1, 0.7,(x_1,x_2))$ when $\mathcal{U}$ is given by (\ref{wat0}).} \label{UA2x0}
\end{figure}
In Figure\,\ref{gra}, we see, for a fixed point $x=(x_1,x_2)$, how the water velocity $\widetilde{\mathcal U}(\theta)$ evolves with respect to $\theta.$
\begin{figure}[htbp]
\begin{center}
\includegraphics[angle=0,width=8cm]{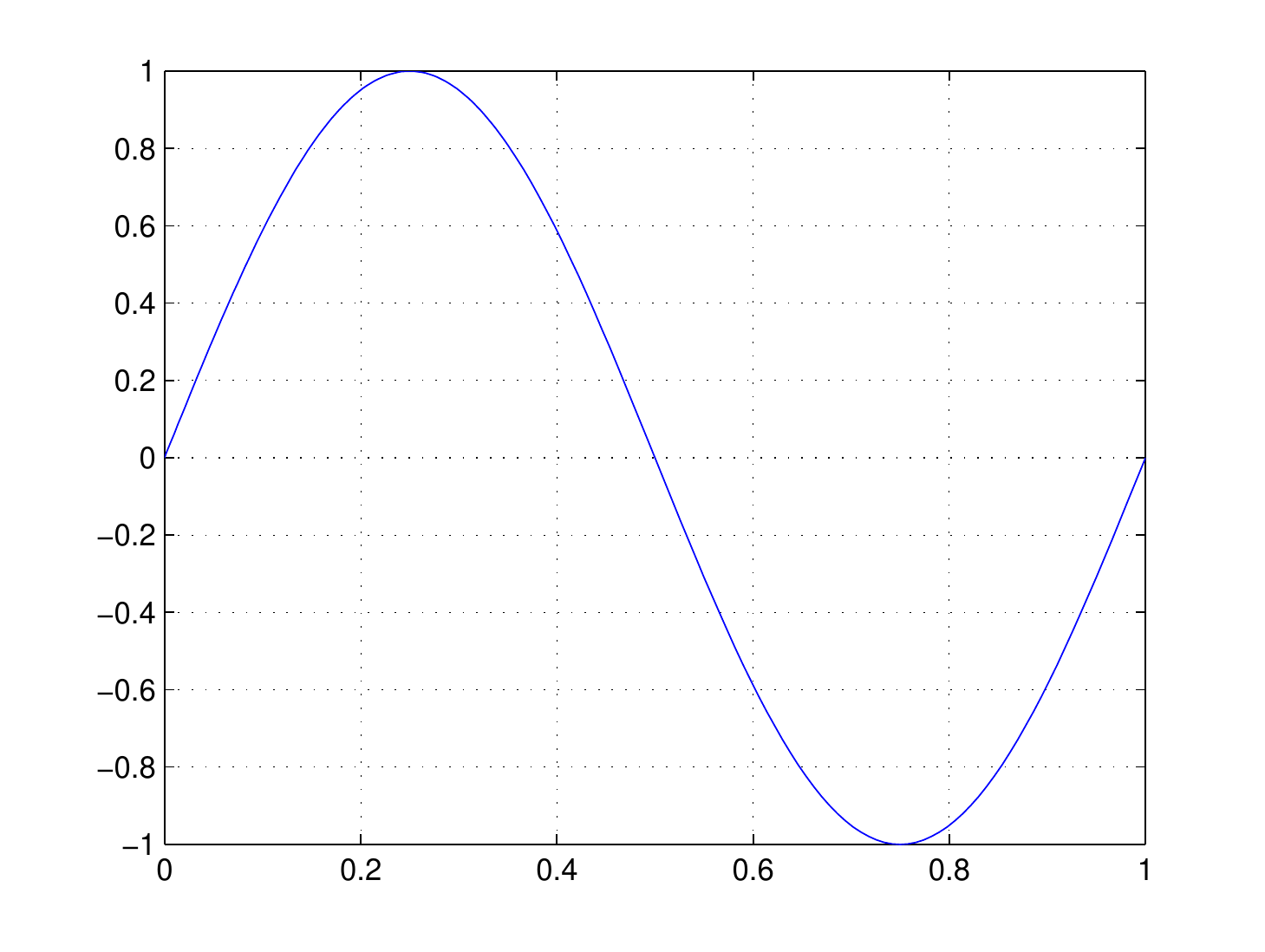}\includegraphics[angle=0, width=8cm]{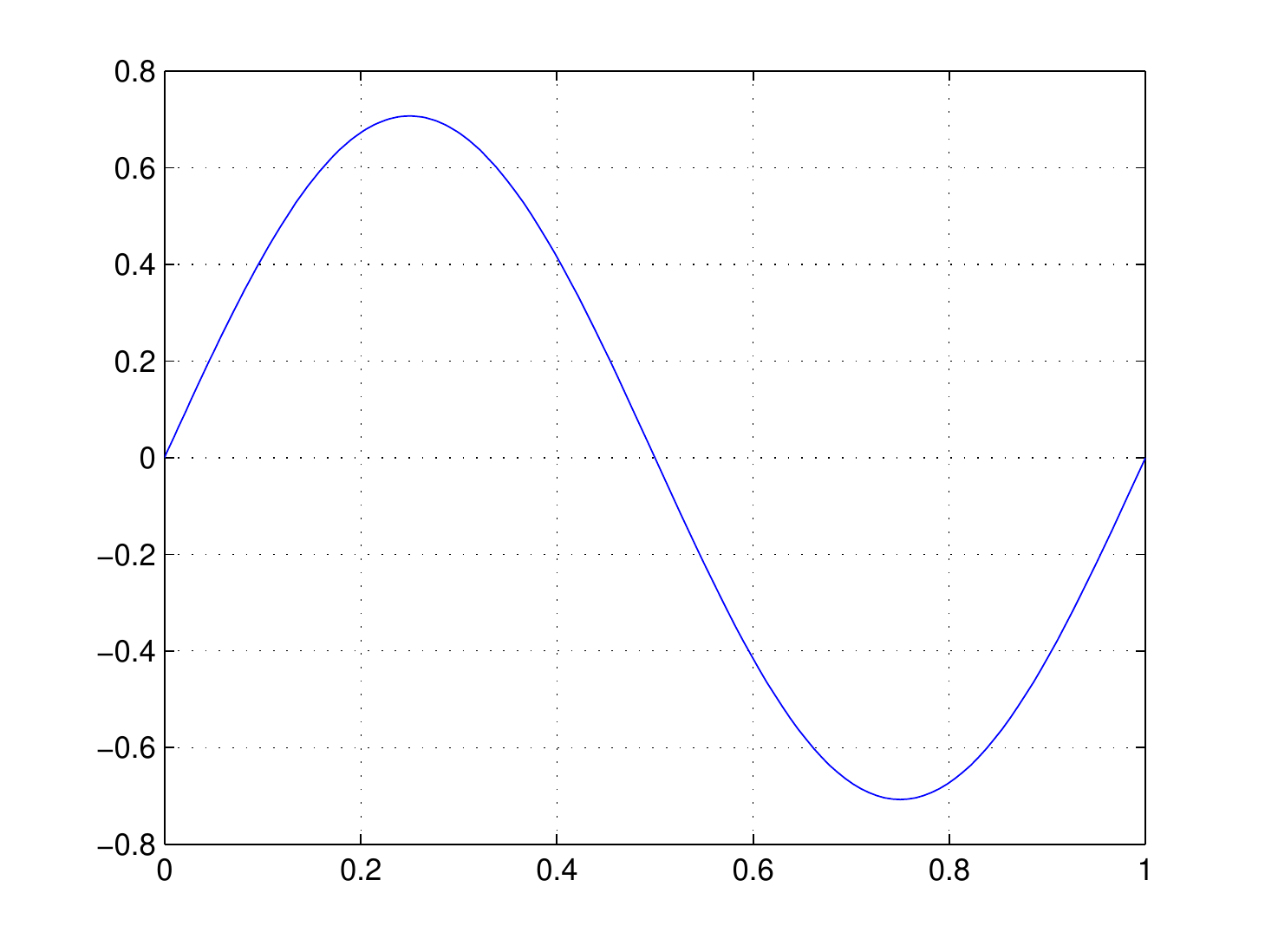}\\
\caption{$\theta$-evolution of  $\widetilde{\mathcal{U}}(\theta,(1/2,0))$ and $\widetilde{\mathcal{U}}(\theta,(1/4,0))$ when
$\mathcal{U}$ is given by (\ref{wat0})}\label{gra}
\end{center}
\end{figure}
%
\noindent In Figure\,\ref{grap}, the $\theta$-evolution of $\widetilde{\mathcal A}(\theta)$ is also given in various points
$(x_1,x_2)\in\mathbb R^2.$
\begin{figure}[htbp]
\includegraphics[angle=0, width=8cm]{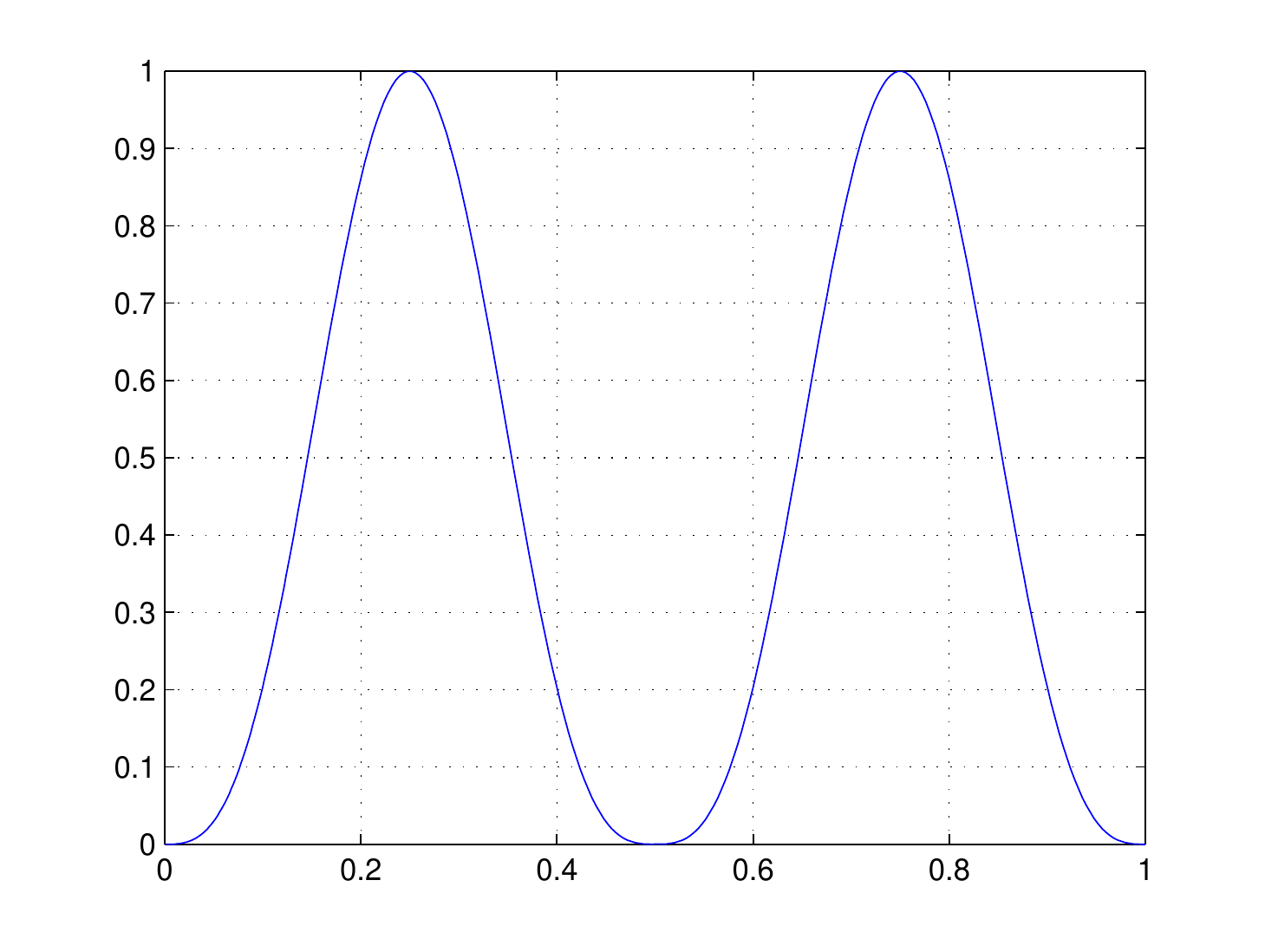},\includegraphics[angle=0, width=8cm]{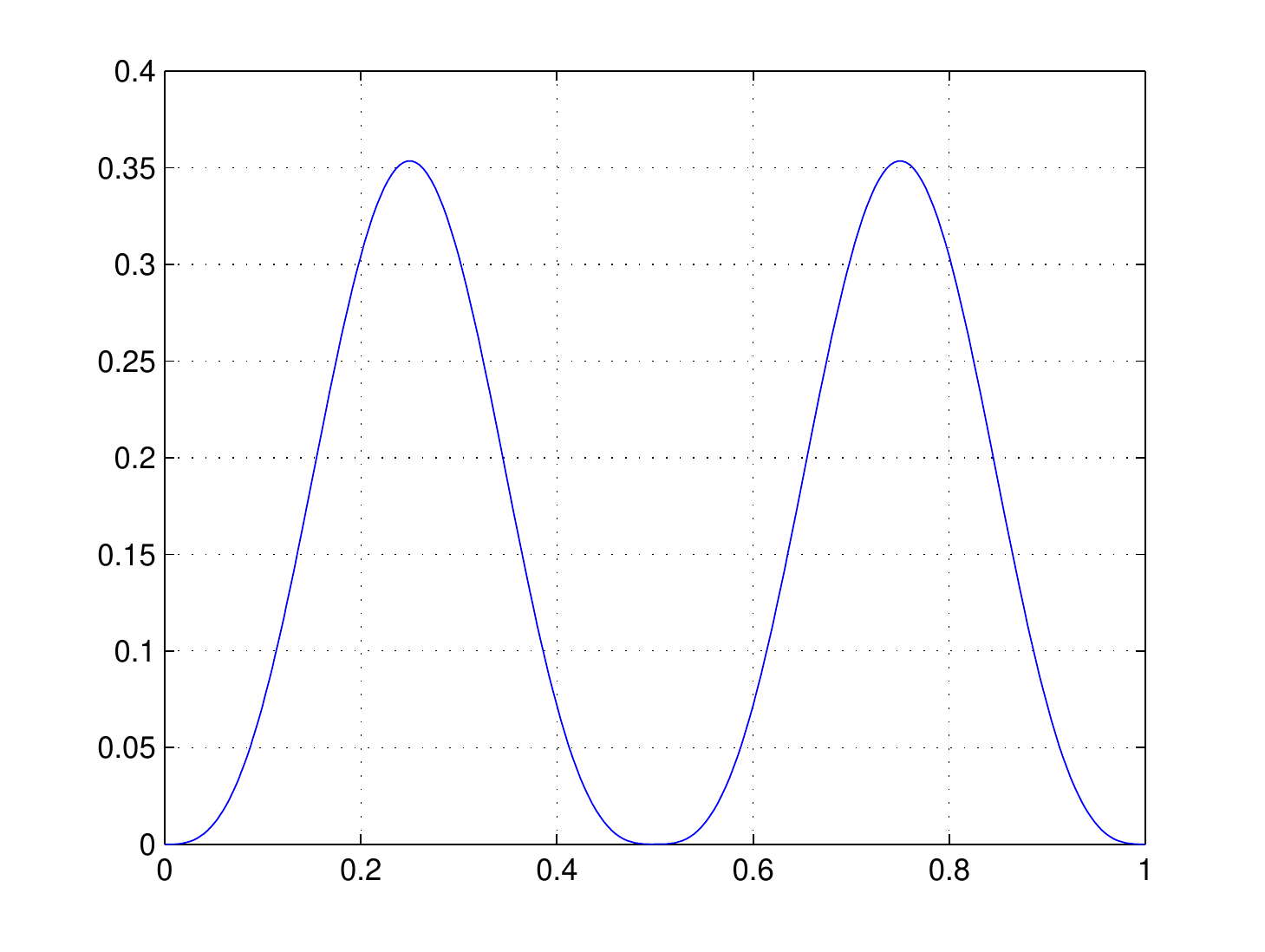}\\
\caption{$\theta$-evolution of  $\widetilde{\mathcal{A}}(\theta,(1/2,0))$ and $\widetilde{\mathcal{A}}(\theta,(1/4,0))$ when
$\mathcal{U}$ is given by (\ref{wat0})}\label{grap}
\end{figure}
\newpage
\noindent In this paragraph, we present numerical simulations in order to validate the Two-Scale convergence presented in Theorem \ref{thHomSecHomn}. For a given $\epsilon,$ we compare $Z_P(t,\frac{t}{\epsilon},x),$ where $Z_P$ is the Fourier expansion of order $P$ of the solution to (\ref{ee179n}) and $z_{P}^\epsilon(t,x)$ the Fourier expansion of order $P$ of the solution to the reference problem. The simulations presented are given for $P=4$.
\noindent The calculation of $z^\epsilon_{P}(t,x)$ implies knowledge of $z_0(x).$ For an initial condition $z_0(x)$
well prepared  and equal to $Z(0,0, x)$, we obtain the results of Figure \ref{U11unistra} and  we remark that the results obtained are the same for
$z_{P}^\epsilon(t,x)$ and $Z_P(t,\frac{t}{\epsilon},x).$\\
\begin{figure}[htbp]
  \includegraphics[angle=0, width=8cm]{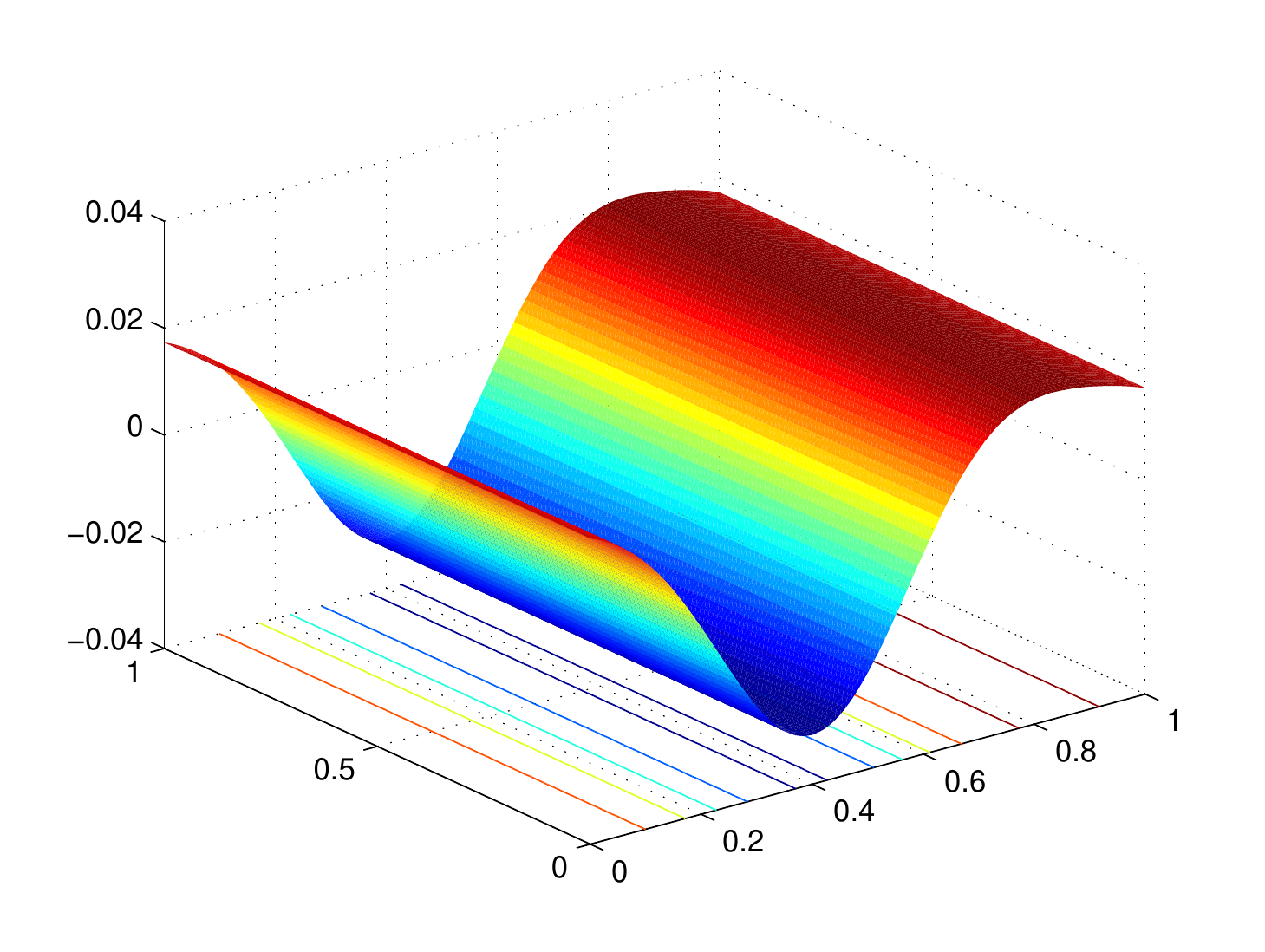}\includegraphics[angle=0, width=8cm]{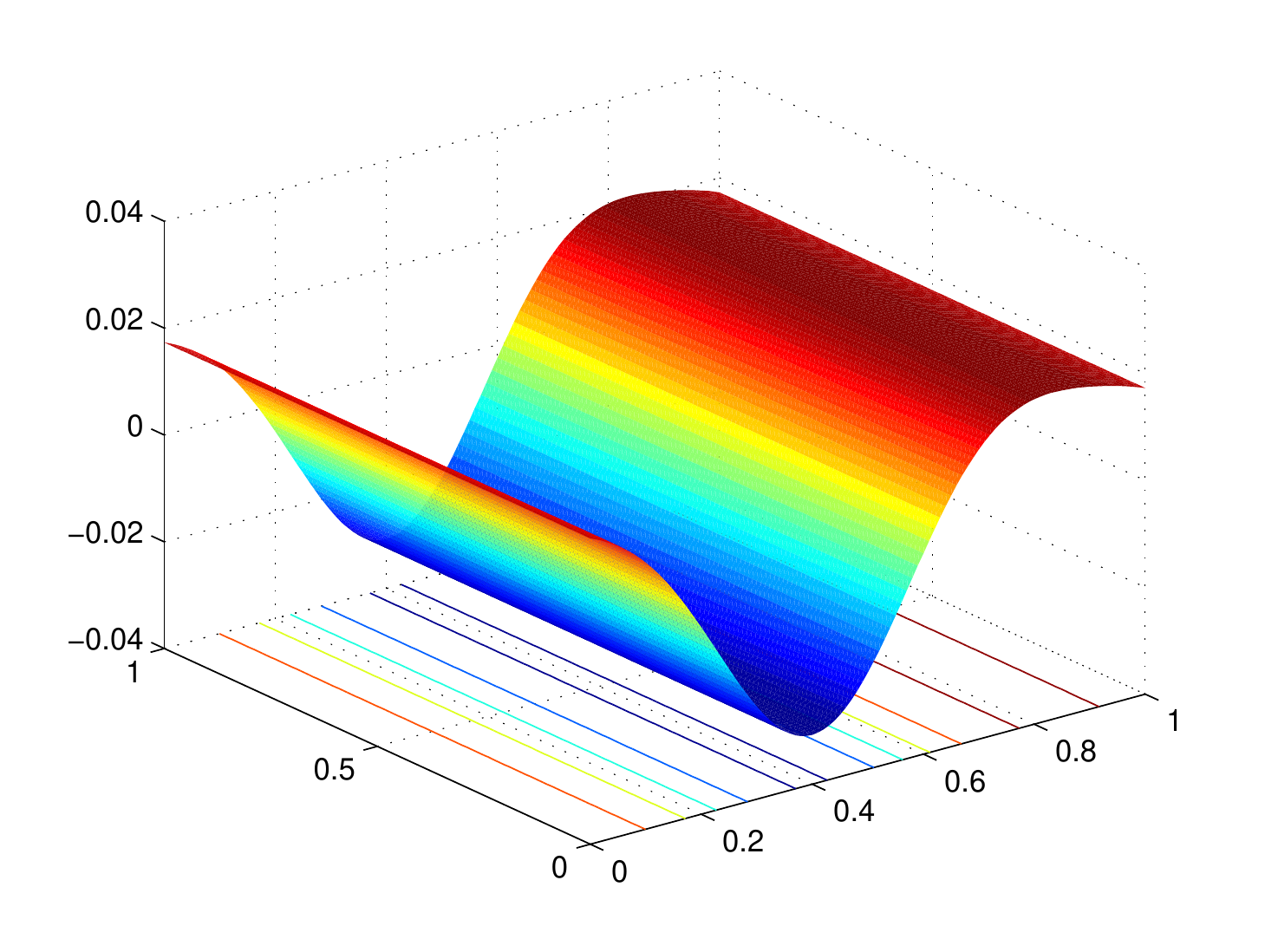}\\
  \caption{Comparison of $z_P^{\epsilon}(t,\cdot)$ and $Z_P(t,\frac{t}{\epsilon},\cdot)$, $P=4$, at time $t=1,\,\,\epsilon=0.001,\,\text{when}\,\,\mathcal{U}$ is given by (\ref{wat0}) and when $z_0(\cdot)=Z(0,0,\cdot).$ On the left $z_P^\epsilon(t,\cdot),$ on the right $Z_P(t,\frac{t}{\epsilon},\cdot).$}\label{U11unistra}
\end{figure}

\newpage
In practice, the solution $Z_P,\,P\in\mathbb N$ evolves according to $P.$ For the simulations, we made the value of the integer $P$ vary and
we saw that this variation is very low from $P\geq6.$

\noindent To better show that $Z_P(t,\frac{t}{\epsilon},x_1,x_2)$ is close to the reference solution $z_{P}^\epsilon(t,x_1,x_2),$ we plot and  compare  $Z_P(t,\frac{t}{\epsilon},x_1,0)$  and $z_{P}^\epsilon(t,x_1,0),$ at different times $t.$ In these comparisons the initial condition $z_0(x_1,x_2)=\cos2\pi x_1+\cos 4\pi x_1$ is different from  $Z(0,0,x_1,x_2)$. The results are shown in Figure\,\ref{compare1} and Figure\,\ref{compare2}.
We see in these figures that the solution $z_{P}^\epsilon(t,x)$ get closer and closer to $Z_P(t,\frac t \epsilon,x)$ with time of order
$\epsilon.$\\
\begin{figure}[htbp]
\begin{center}
  \includegraphics[angle=0, width=5cm]{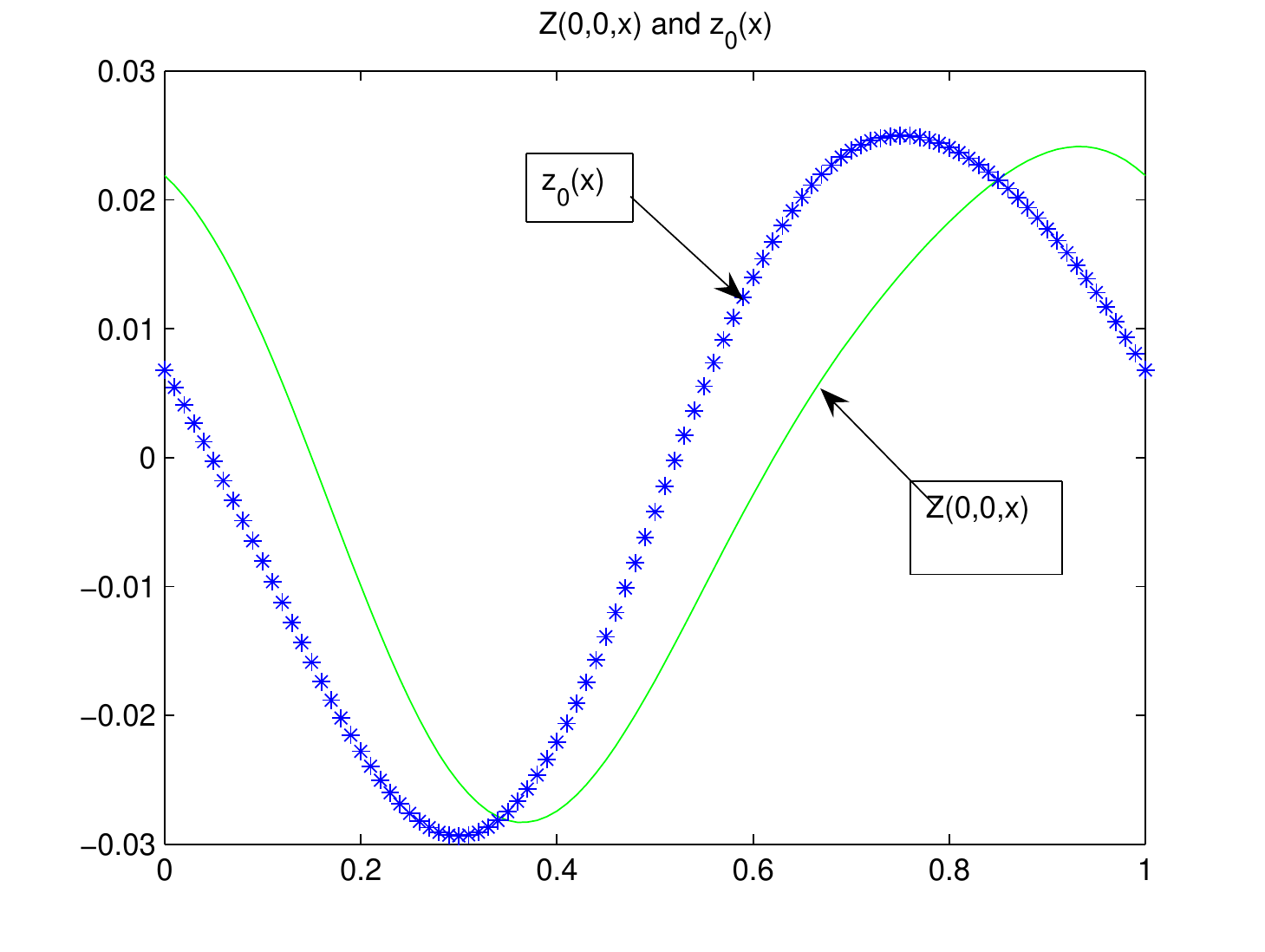}\includegraphics[angle=0, width=5cm]{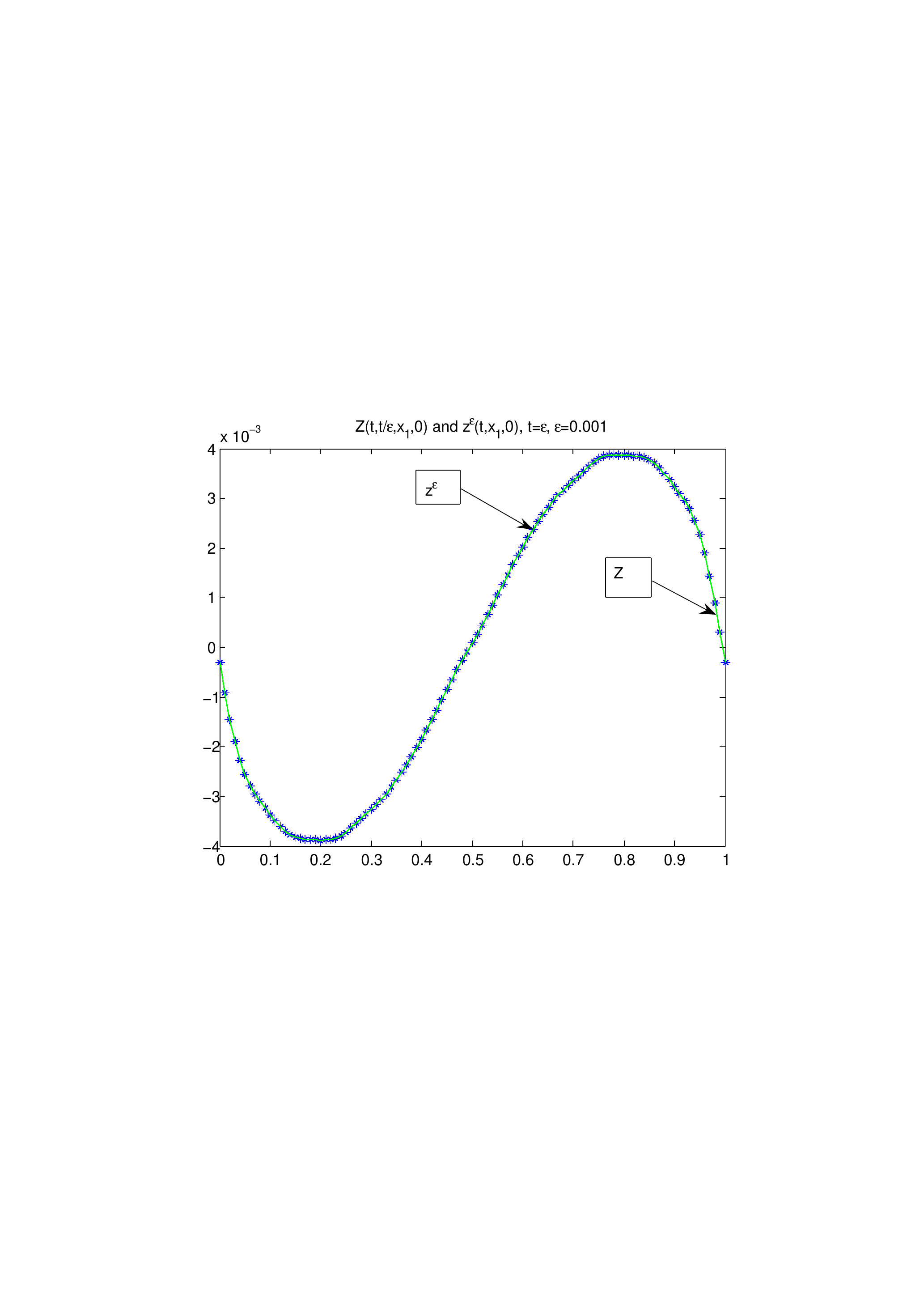}\includegraphics[angle=0, width=5cm]{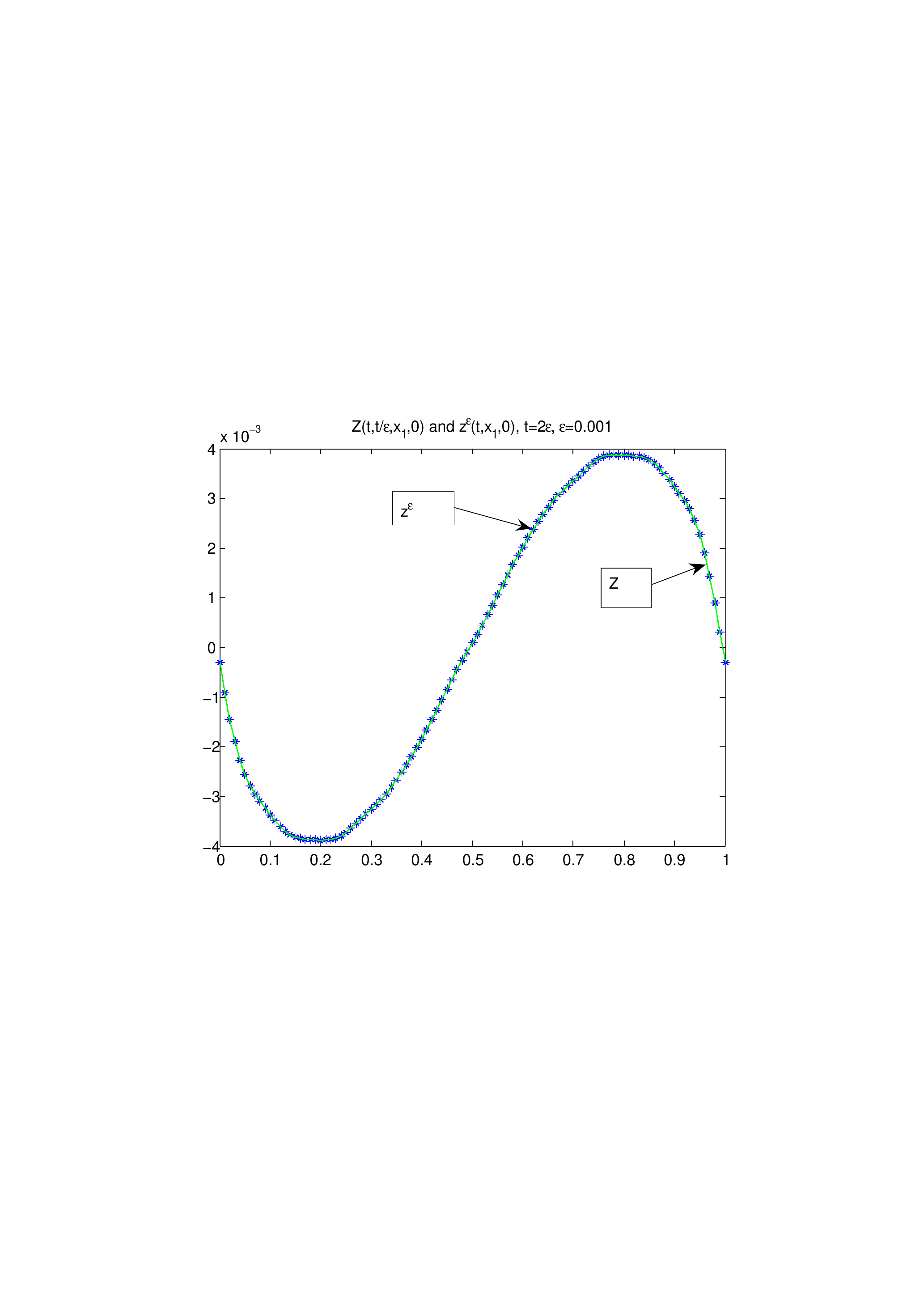}\\\end{center}
  \caption{\label{compare1}Comparison  of $z_P^{\epsilon}(t,x_1,0)$  and $Z_P(t,\frac{t}{\epsilon},x_1,0))$, $P=4$. On the left $t=0$, in the middle $t=\epsilon$ and $t=2\epsilon$ on the right,\,$\epsilon=0.001.$}
\end{figure}
\begin{figure}[htbp]
\begin{center}
  \includegraphics[angle=0, width=5cm]{compar00.pdf}\includegraphics[angle=0, width=5cm]{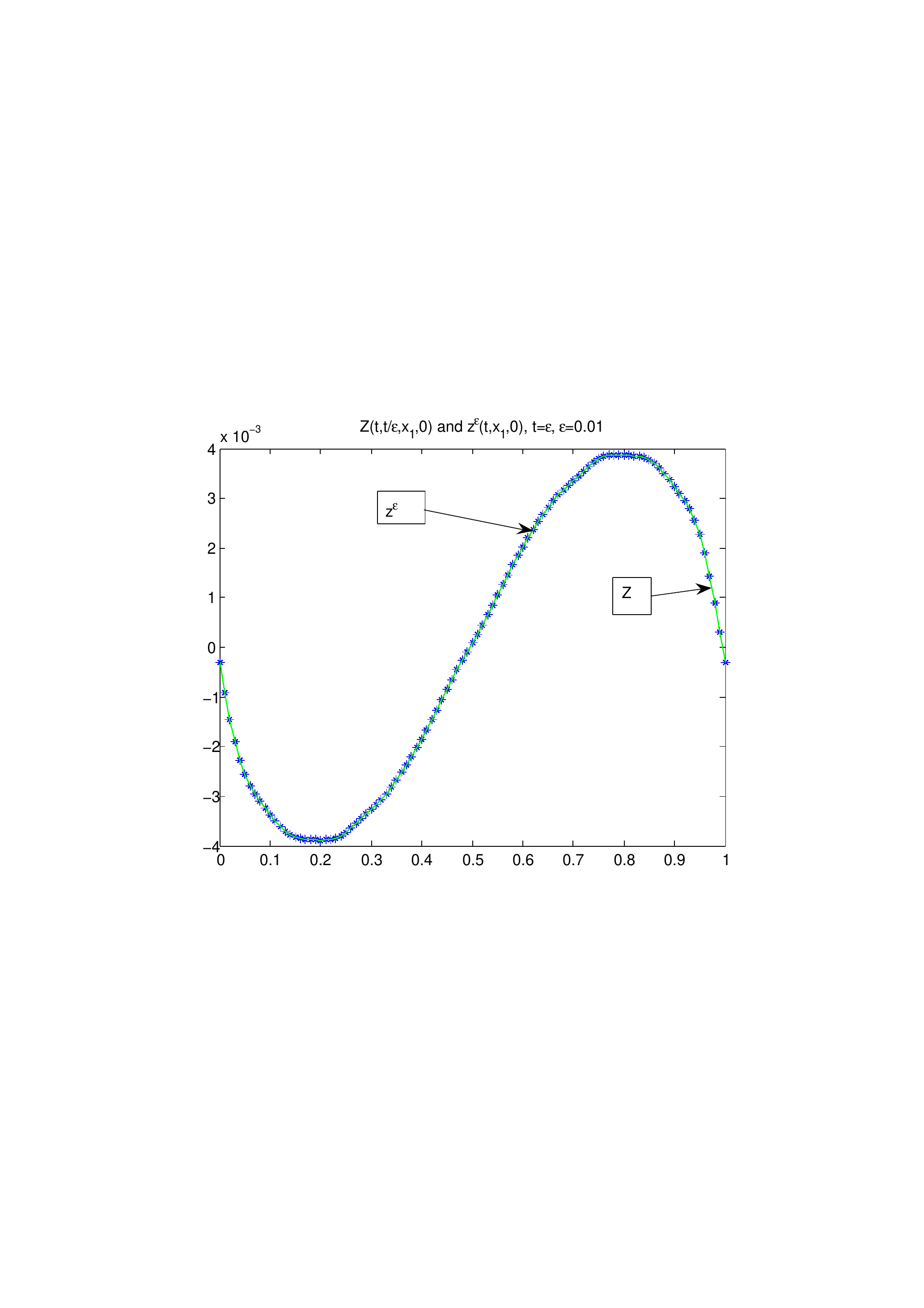}\includegraphics[angle=0, width=5cm]{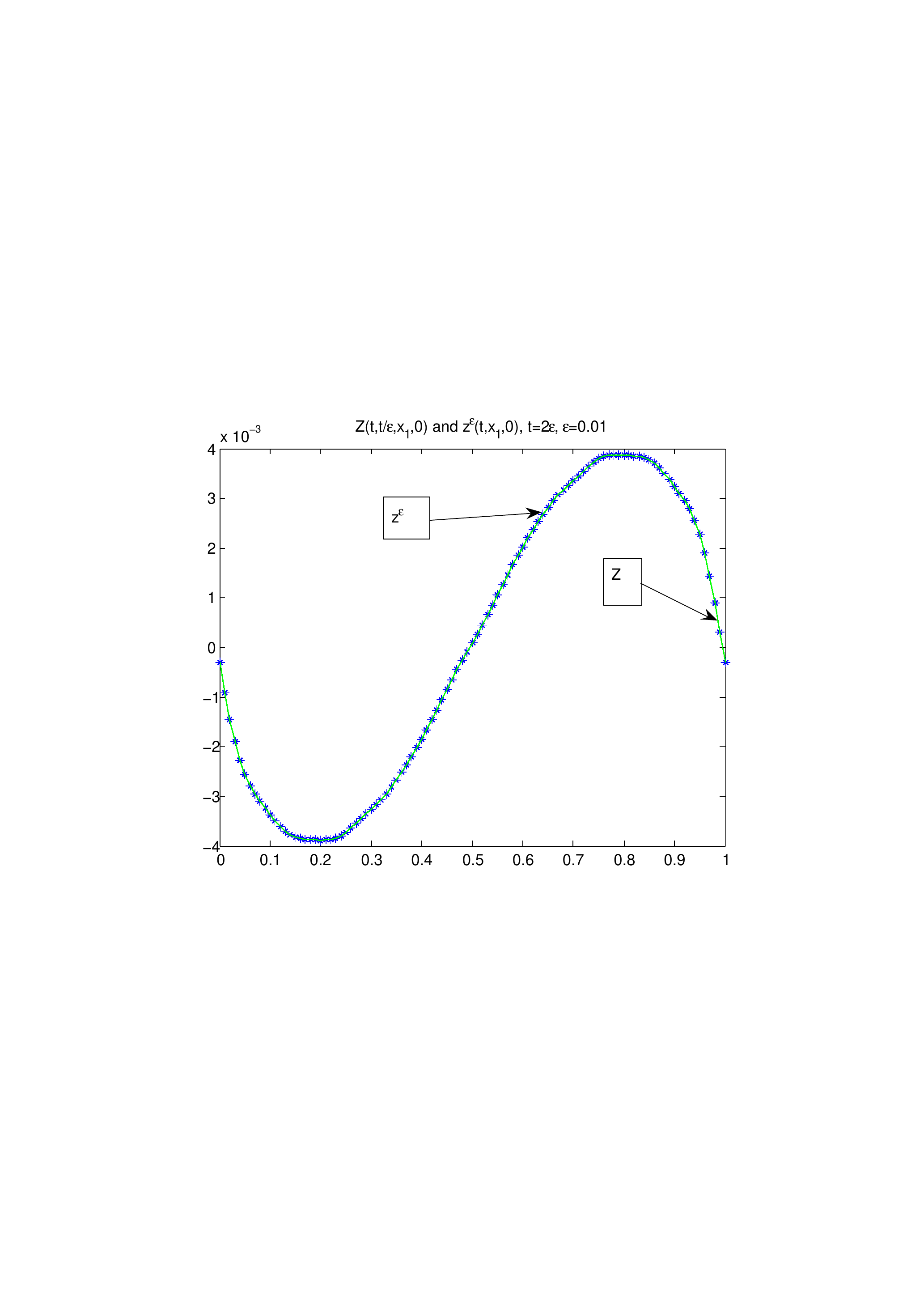}\\\end{center}
  \caption{\label{compare2}Comparison  of $z_P^{\epsilon}(t,x_1,0)$  and $Z_P(t,\frac{t}{\epsilon},x_1,0)).$ On the left $t=0$,
  in the middle $t=\epsilon$ and $t=2\epsilon$ on the right, $\epsilon=0.01.$}
\end{figure}
\newpage
So we can see from these figures that the solution $Z$ of the Two-Scale limit problem is such that $Z(t,\frac t\epsilon, \cdot,\cdot)$ is close
 to the solution $z^\epsilon(t,\cdot,\cdot)$ of the reference problem. 
In the presently considered case where the initial condition for $z^\epsilon$ is not $Z(0,0,\cdot,\cdot),$ we saw in Figure \ref{compare1} and Figure\,\ref{compare2} that $z_P^\epsilon$ tends to reach a steady state. This steady state is an oscillatory one in the sense that for large $t,\,\,z_P^\epsilon(t,\cdot,\cdot)$ behaves like $Z_P(t,\frac t\epsilon,\cdot,\cdot).$ This is illustrated by Figure\,\ref{imper} where $z_P^\epsilon(t,x_1,0)$ and $Z_P(t,\frac t\epsilon,x_1,0)$ are given for various value of $t$ in a period of lenght $\epsilon.$\\
More precisely, in this figure we see that within a period of time of lenght $\epsilon,\,z_{P}^\epsilon(t,\cdot,\cdot)$ and $Z_P(t,\frac t\epsilon,\cdot,\cdot)$ do not glue together completly. Nevertheless, despite this phenomenon which is linked with the fact that the Two-Scale approximation of $z^\epsilon(t,\cdot,\cdot)$ by $Z(t,\frac t\epsilon,\cdot,\cdot)$ is only of order 1 in $\epsilon,$ the two solutions re-glue well together at the end of the period.
\begin{figure}[htbp]
 \begin{center} \includegraphics[angle=0, width=7cm]{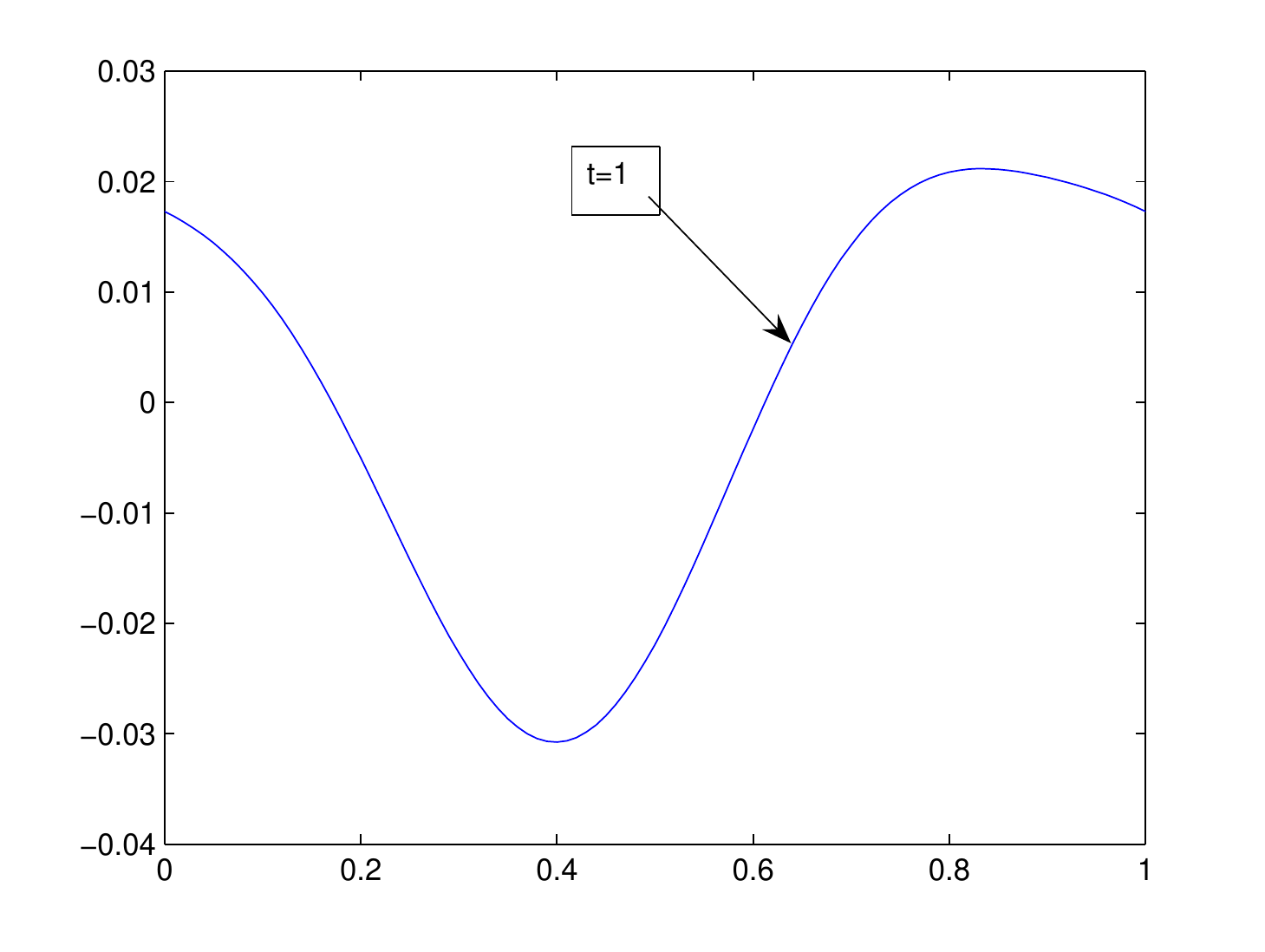}\includegraphics[angle=0, width=7cm]{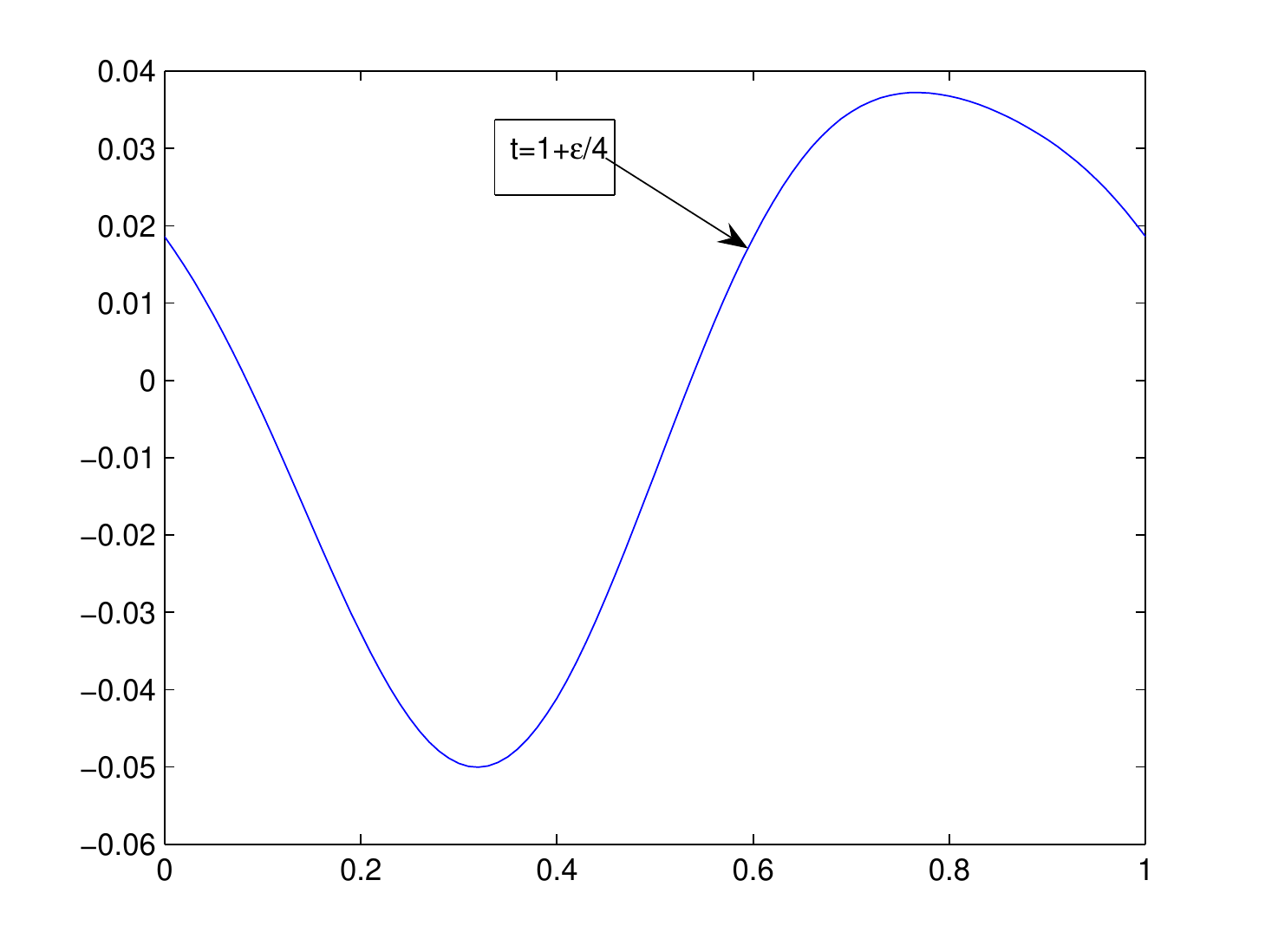}\\\includegraphics[angle=0, width=7cm]{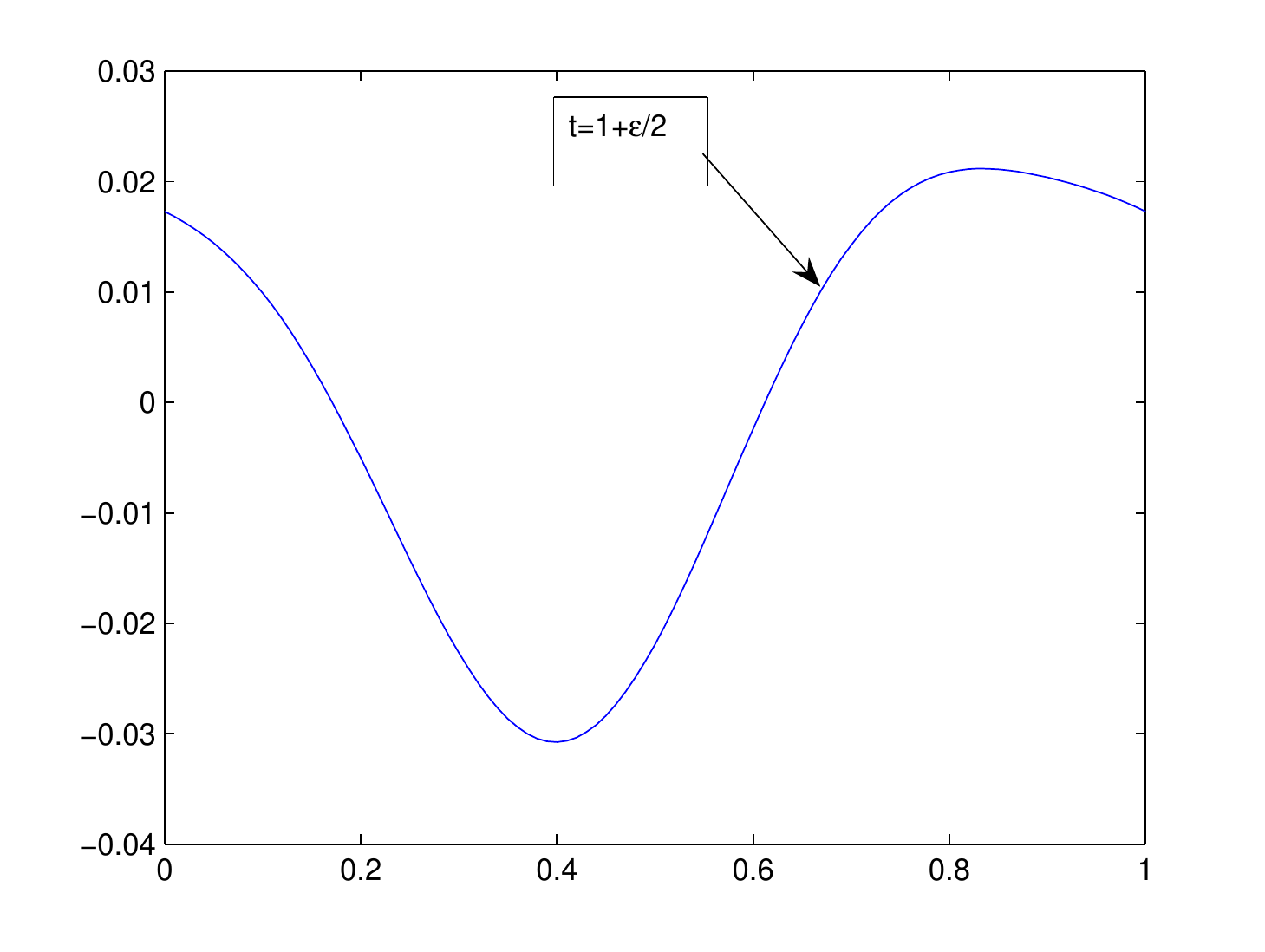}\includegraphics[angle=0, width=7cm]{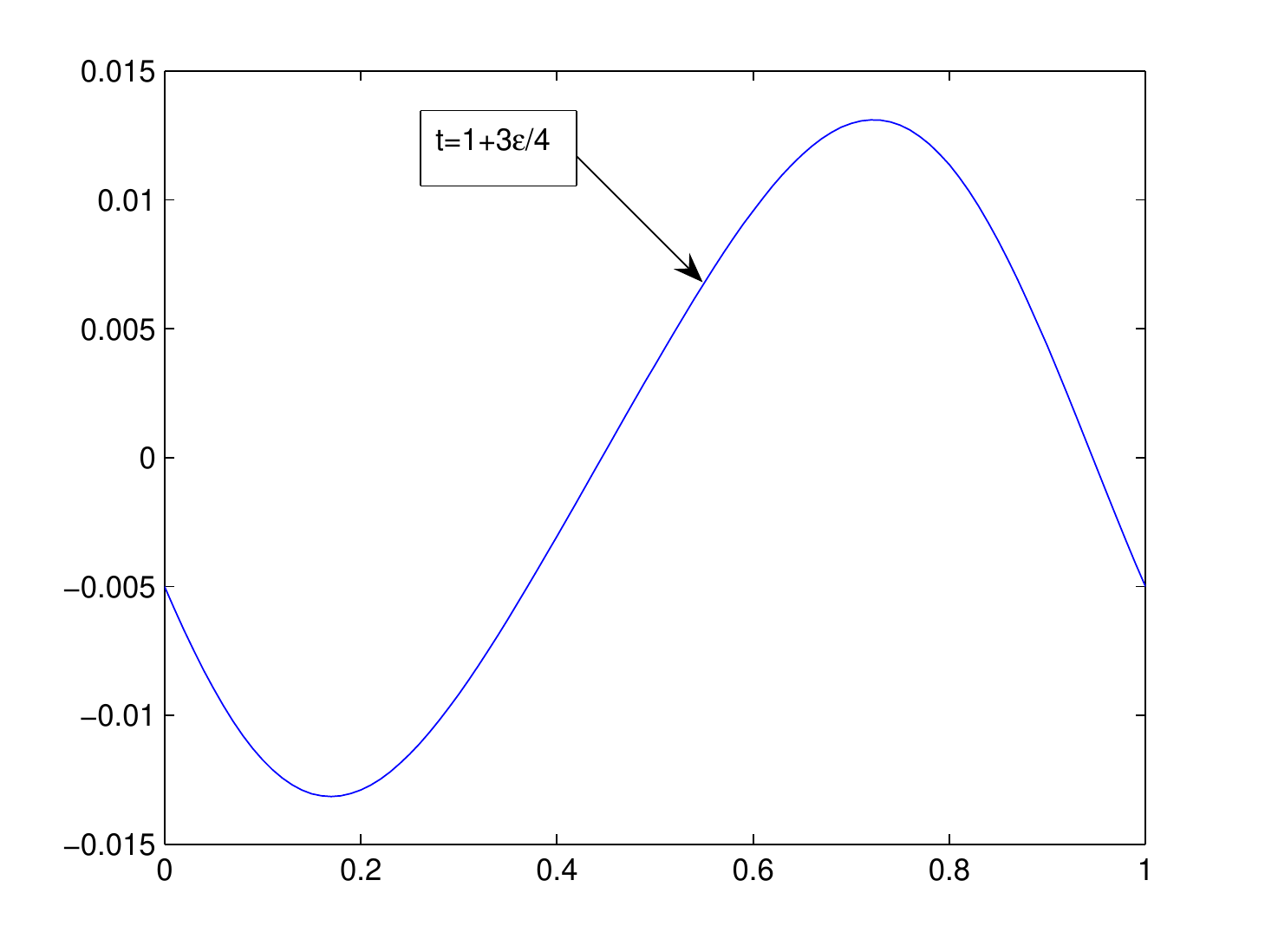}\\
 \includegraphics[angle=0, width=7cm]{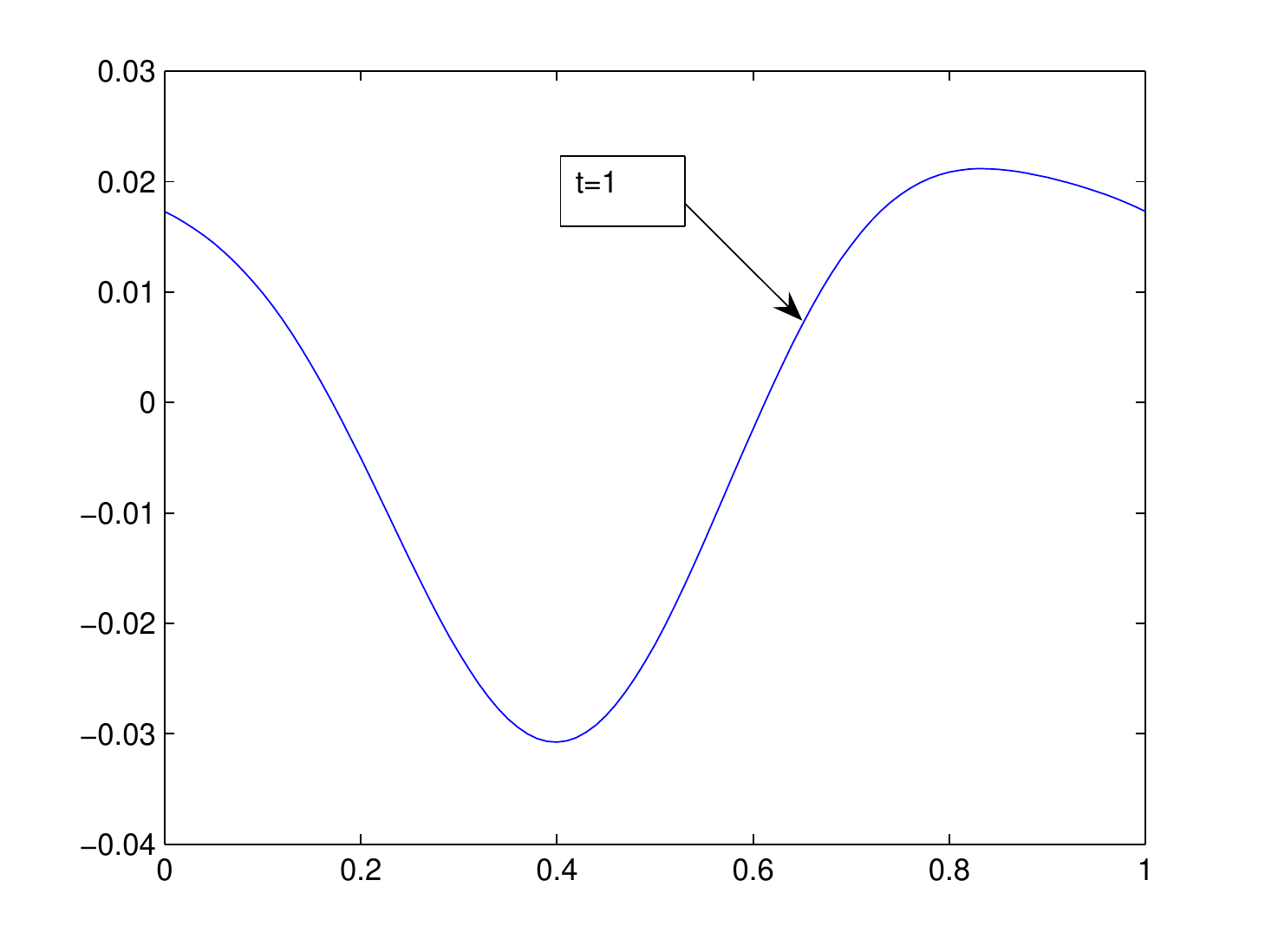}\includegraphics[angle=0, width=7cm]{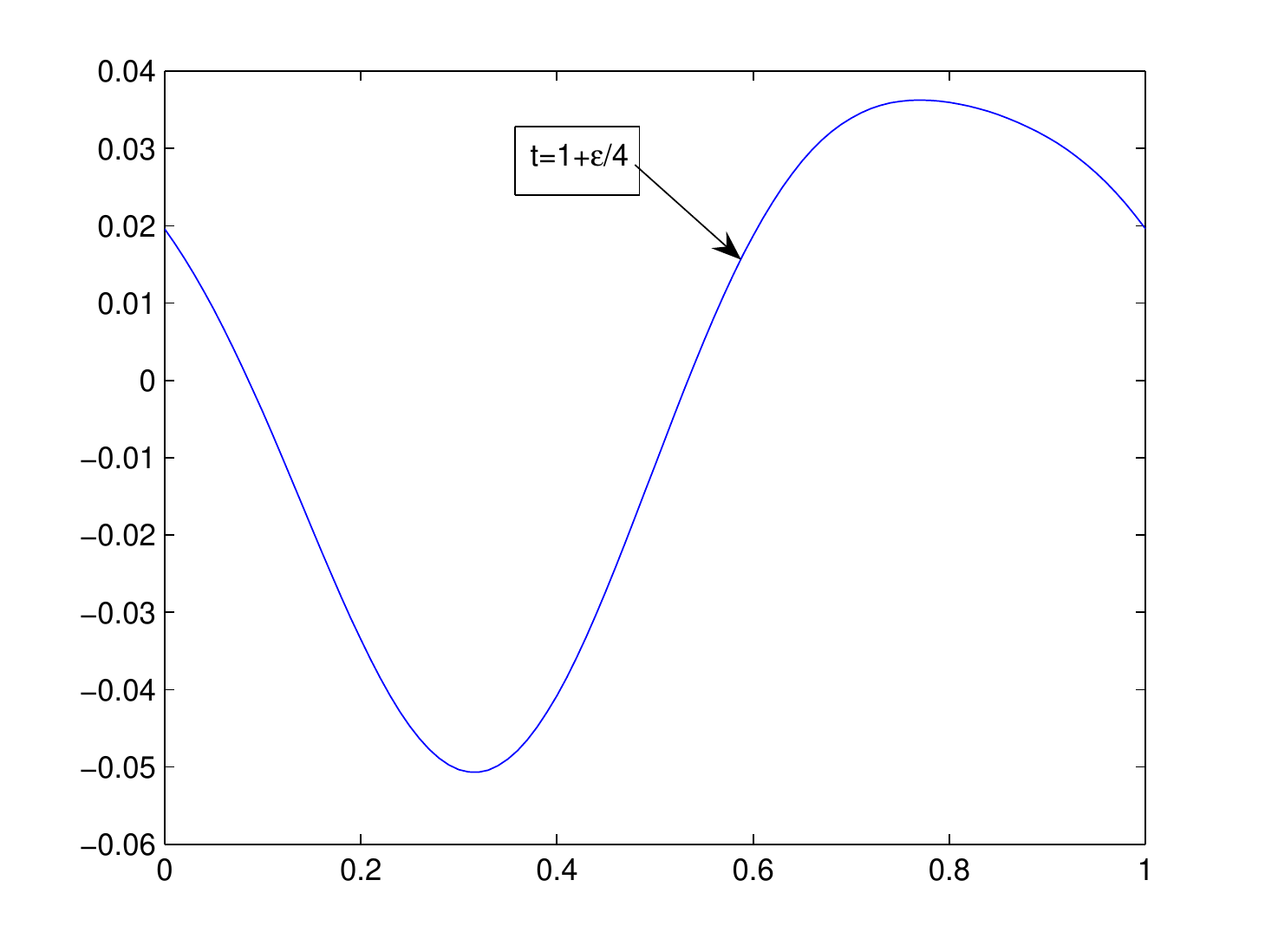}\\\includegraphics[angle=0, width=7cm]{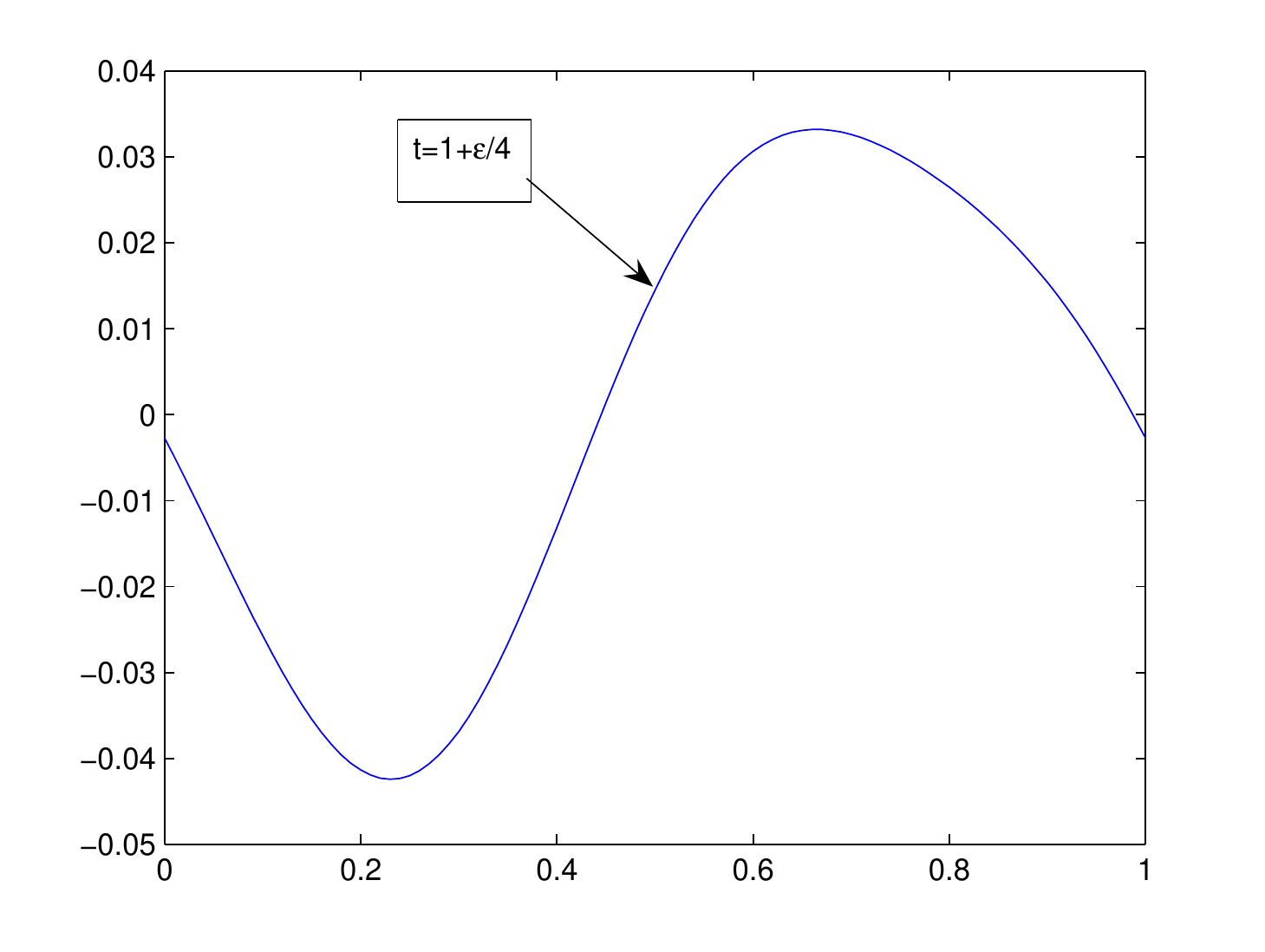}\includegraphics[angle=0, width=7cm]{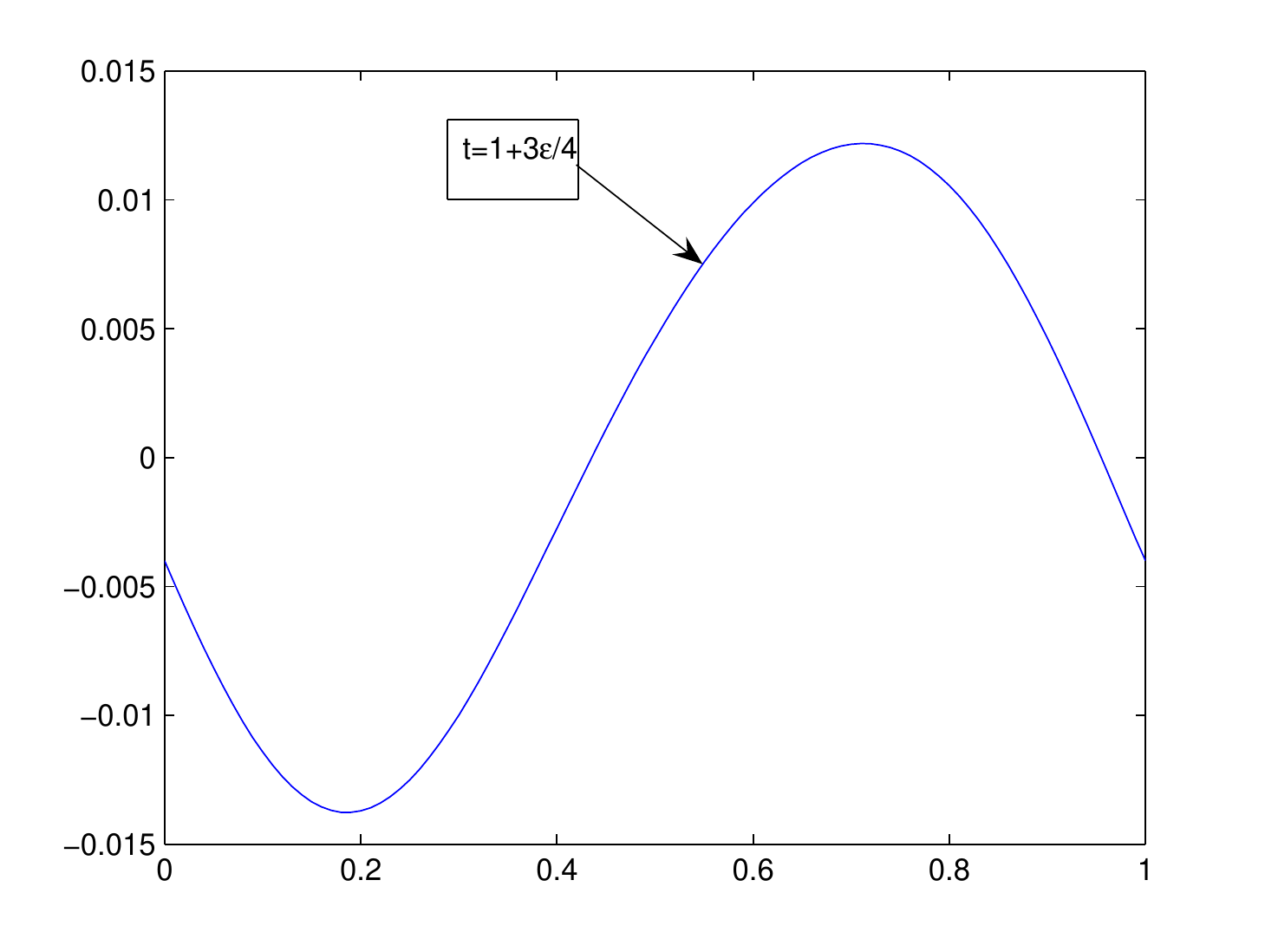}\\\end{center}
  \caption{\label{imper}Evolution of $Z_P(t,\frac{t}{\epsilon},x_1,0)$ in the top and $z_{P}^\epsilon(t,x_1,0)$ in the bottom, $t=1+\frac{n\epsilon}{4},\,\, n=0,1,2,3.$ }
\end{figure}
\newpage
\subsubsection{Comparisons of $z^\epsilon(t,x)$ and $Z(t,\frac t \epsilon,x)$ with $\mathcal{U}$ is given by (\ref{wat}).  }
In this subsection,  we do the same as in the precedent one, but when the velocity fields $\mathcal{U}$ given by (\ref{wat}). The results are all identical to the precedent one i.e.
the Two-Scale limit $Z_P(t,\frac{t}{\epsilon},x_1,x_2)$ is very close to the solution $z_P^\epsilon(t,x_1,x_2)$  to the reference problem when $P=4$.
The initial condition $z_0(x_1,x_2)\neq Z(0,0,x_1,x_2)$ and is the same as in the subsection \ref{sectionavant}.
The results are given for $\epsilon=0.1$ and $\epsilon=0.005$ and for various time $t.$ We notice that $z^\epsilon$ comes very close to
$Z(t,\frac{t}{\epsilon},x_1,x_2)$ when $\epsilon$ is very small. We begin by giving the space distribution of $\mathcal U$ at various time and
the $\theta-$evolution of $\mathcal U$ and $\widetilde{\mathcal{A}}.$
The second  velocity fields is given by 
\begin{equation}\label{wat}\mathcal{U}(t,\theta,x_1,x_2)=\mathcal{U}(t,\theta,x)=\left\{\begin{array}{ccc}\vspace{0.25cm}0\,\,\text{in}\,\,[0,\theta_1],\\
\vspace{0.25cm}
\frac{\theta-\theta_1}{\theta_2-\theta_1}U_{thr} \mathbf e_2\,\,\text{in}\,\,[\theta_1,\theta_2],\\
\vspace{0.25cm}
U_{thr}\mathbf e_2+\phi(\frac{\theta-\theta_2}{\theta_3-\theta_2})\psi(t,x)\,\,\text{in}\,\,[\theta_2,\theta_3],\\
\vspace{0.25cm}
\frac{\theta-\theta_3}{\theta_4-\theta_3}U_{thr} \mathbf e_2\,\,\text{in}\,\,[\theta_3,\theta_4],\\
\vspace{0.25cm}
0\,\,\text{in}\,\,[\theta_4,\theta_5],\\
\vspace{0.25cm}
\frac{\theta-\theta_5}{\theta_6-\theta_5}U_{thr} \mathbf e_2\,\,\text{in}\,\,[\theta_5,\theta_6],\\
\vspace{0.25cm}
-U_{thr} \mathbf e_2-\phi(\frac{\theta-\theta_6}{\theta_7-\theta_6})\psi(t,x)\,\,\text{in}\,\,[\theta_6,\theta_7],\\
\vspace{0.25cm}
-\frac{\theta-\theta_7}{\theta_8-\theta_7}U_{thr} \mathbf e_2\,\,\text{in}\,\,[\theta_7,\theta_8],\\
\vspace{0.25cm}
0\,\,\text{in}\,\,[\theta_8,1],\\
\end{array}\right.\end{equation}
where $U_{thr}>0,\,\,\phi$ is a regular positive function satisfying $\phi(s)=s(1-s)$ and 
$\psi(t,x_1)=(1+\sin\frac{\pi}{30}t)(U_{thr}\mathbf e_2+\frac{1}{10}(1+\sin2\pi x_1)\mathbf e_1),\,\,\theta_{i}=\frac{i+1}{10},\,\,i=1,\ldots,8.$\\
The $\theta$-evolution of $\mathcal{U},$ given by (\ref{wat}), is given in Figure\,\ref{UA1} for various position in $[0,1]^2.$\\
Function $g_a(\mathbf u)=g_c(\mathbf u)=|\mathbf u|^3, a=c=1$ and $\mathcal{M}(t,\theta,x)=0$ which yields a $\theta$-evolution of
$\widetilde{\mathcal{A}}(\theta)$ which is drawn for various positions in Figure\,\ref{UA2}.\\
\vspace{9cm}
\begin{figure}[htbp]
\begin{center}
  \includegraphics[angle=0, width=5cm]{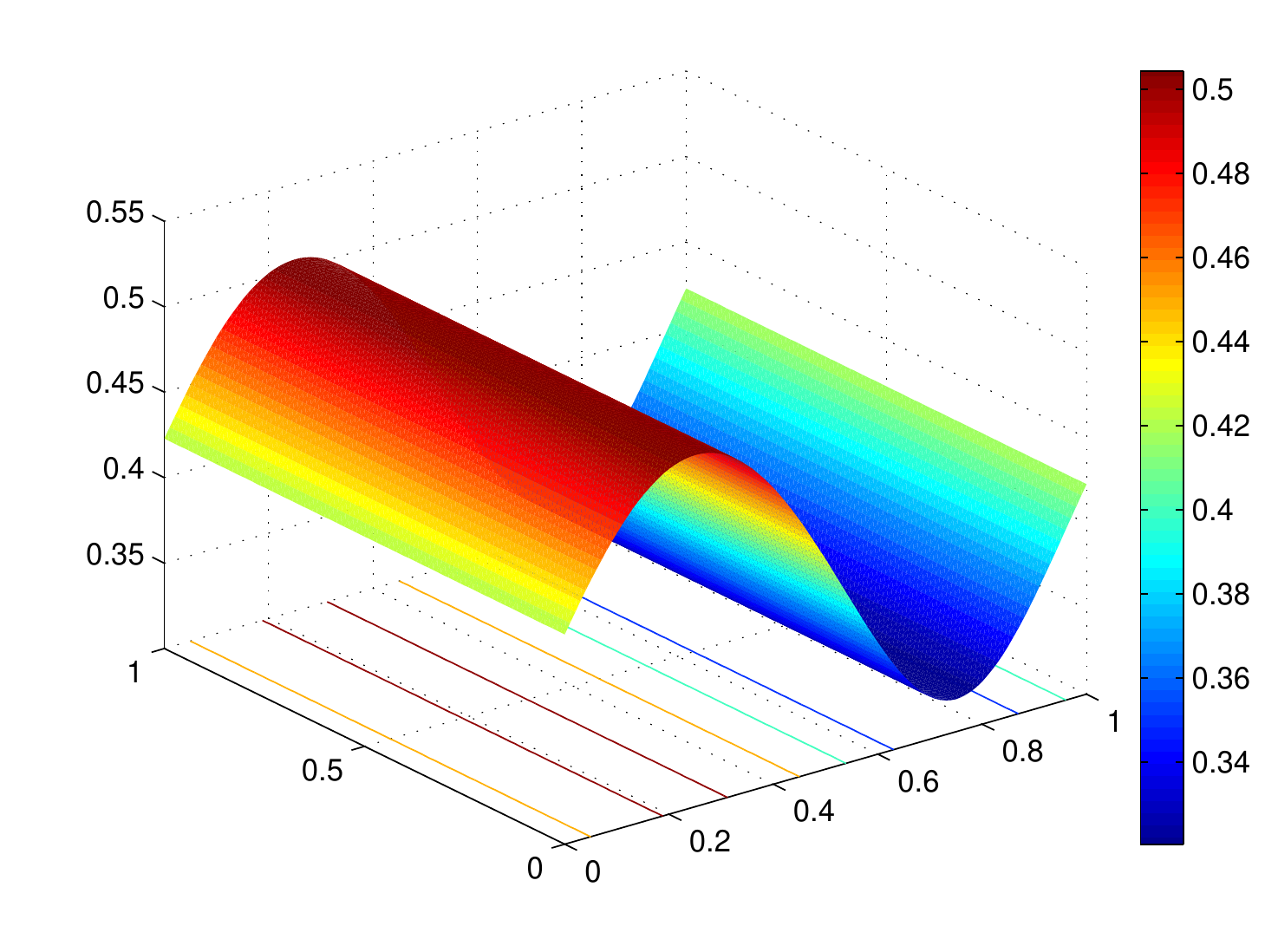}\includegraphics[angle=0, width=5cm]{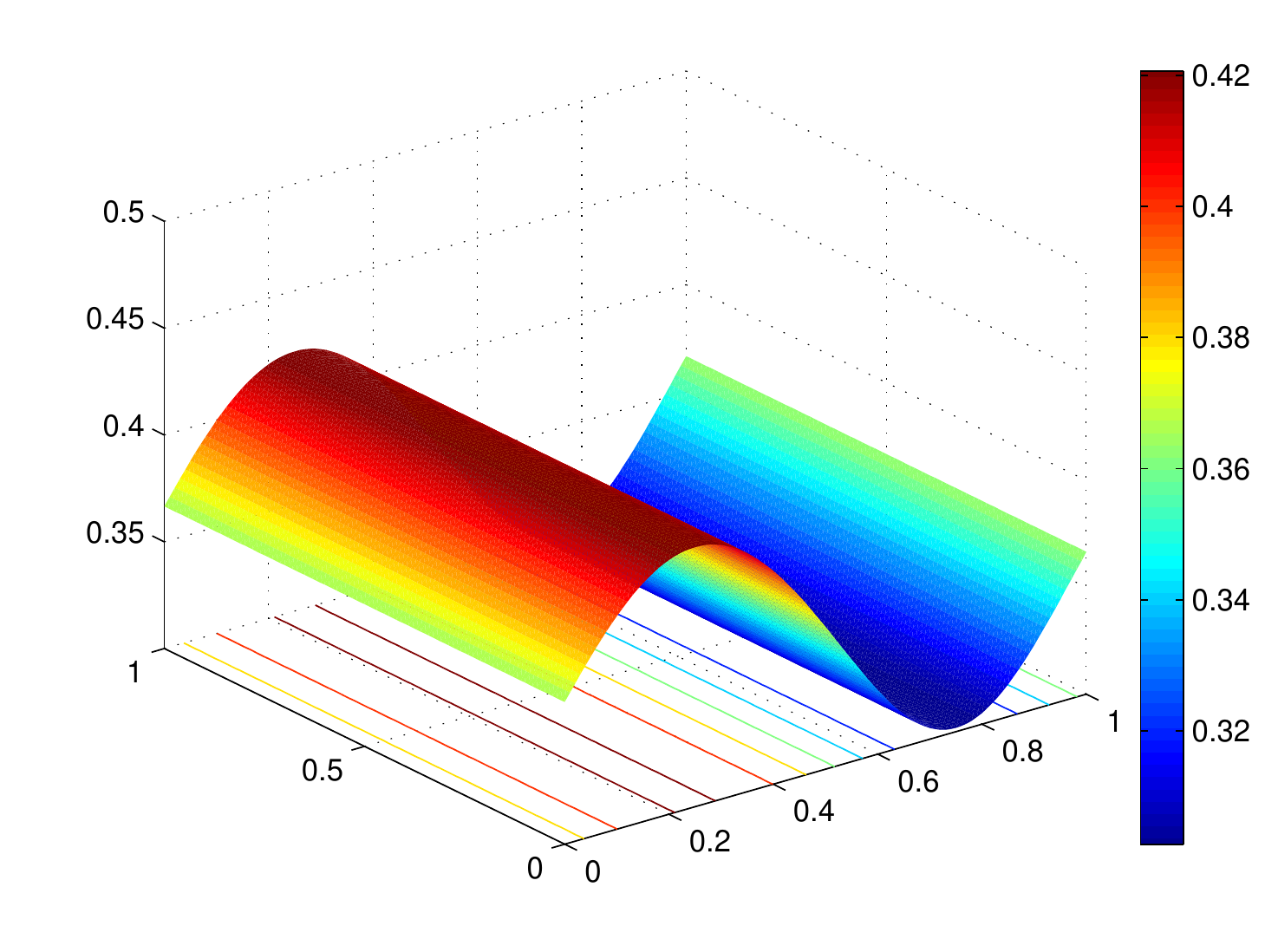}\\
\includegraphics[angle=0, width=5cm]{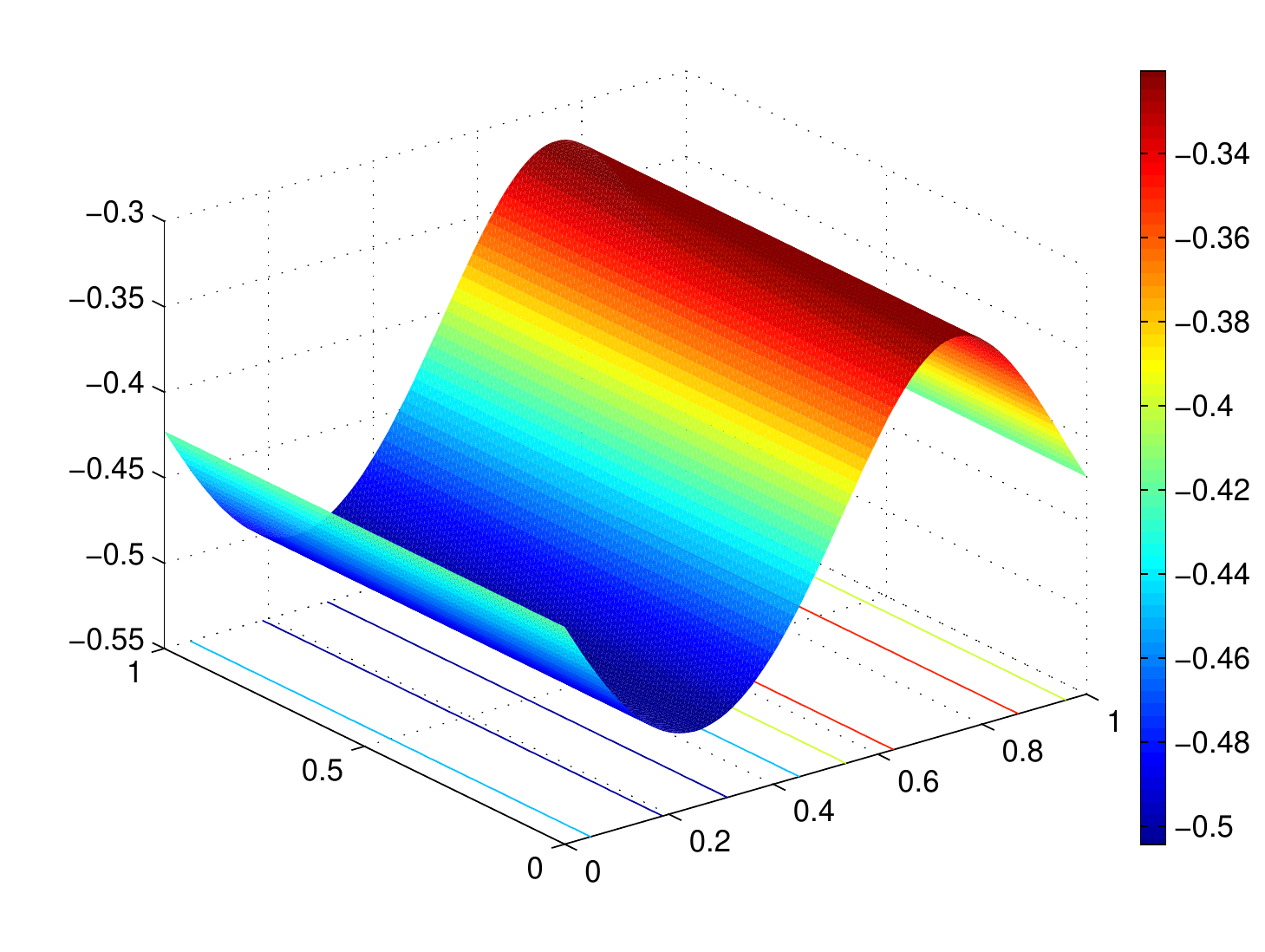}\end{center}
\caption{Space distribution of the first component of $\mathcal{U}(1, 0.25,(x_1,x_2))$, $\mathcal{U}(1, 0.275,(x_1,x_2))$
 and $\mathcal{U}(1, 0.75,(x_1,x_2))$ when $\mathcal{U}$ is given by (\ref{wat}).}\label{UA2x}
\end{figure}
\begin{figure}[!]
\begin{center}
  \includegraphics[angle=0, width=6cm]{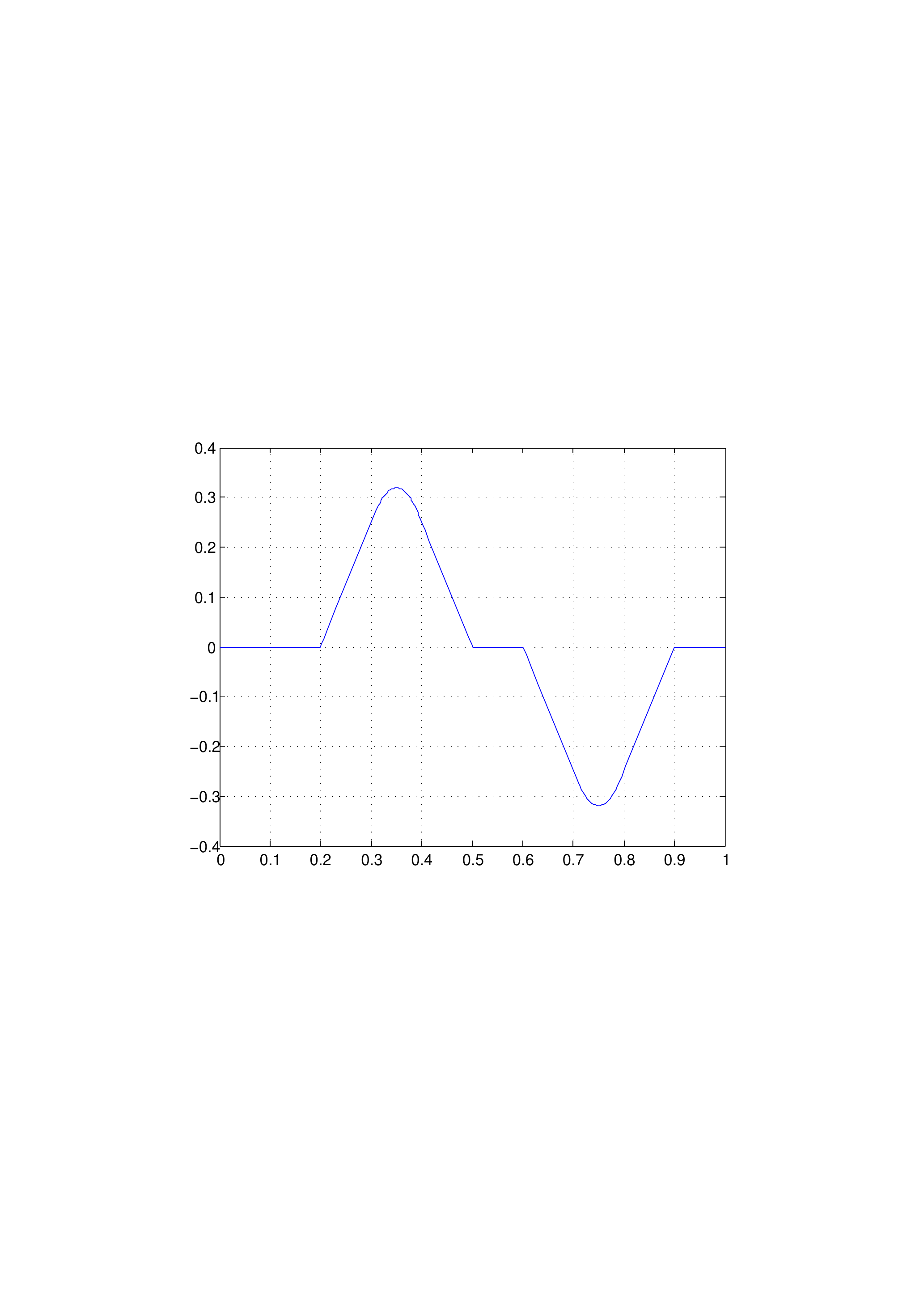}\includegraphics[angle=0, width=6cm]{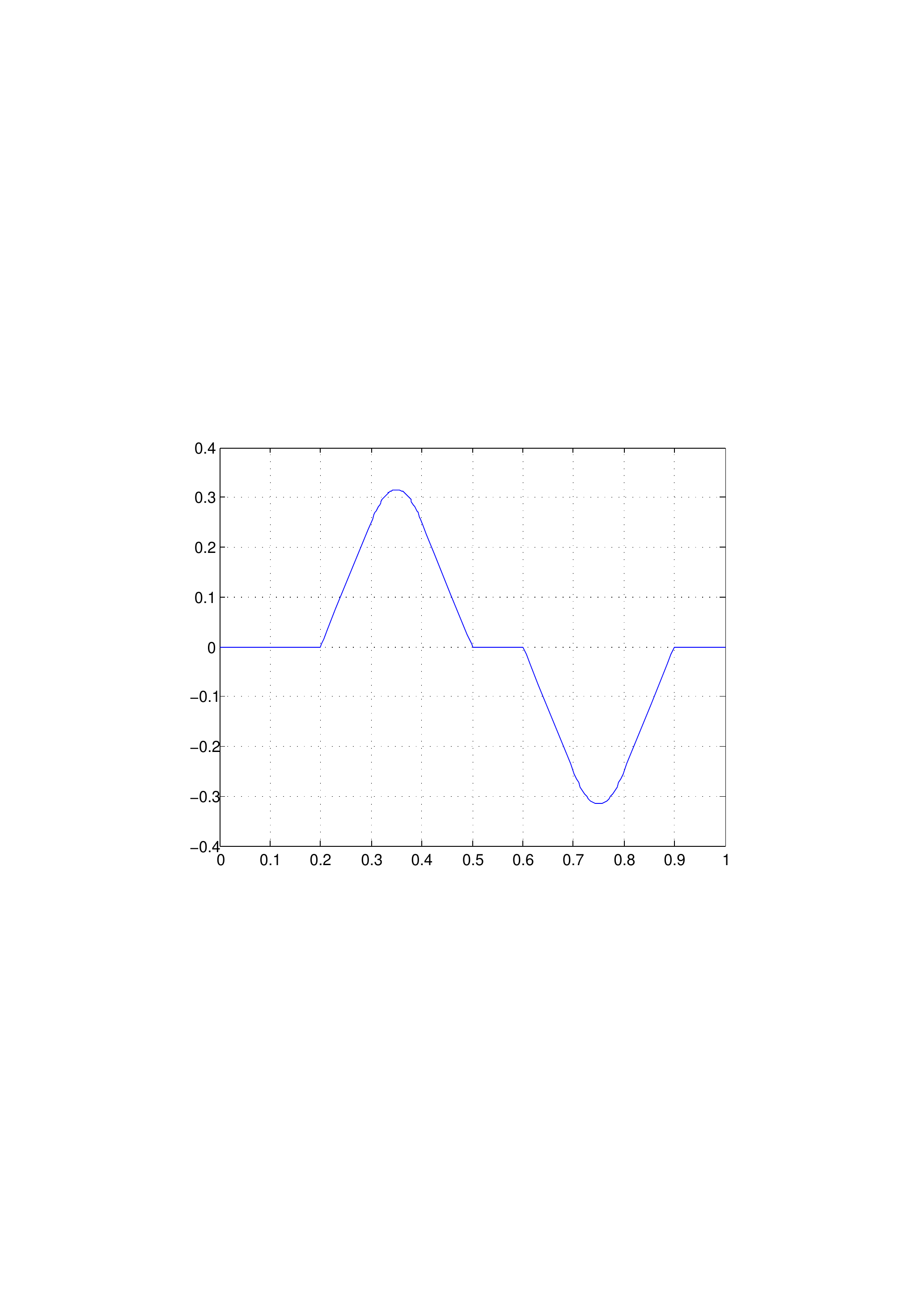}\\
\includegraphics[angle=0, width=6cm]{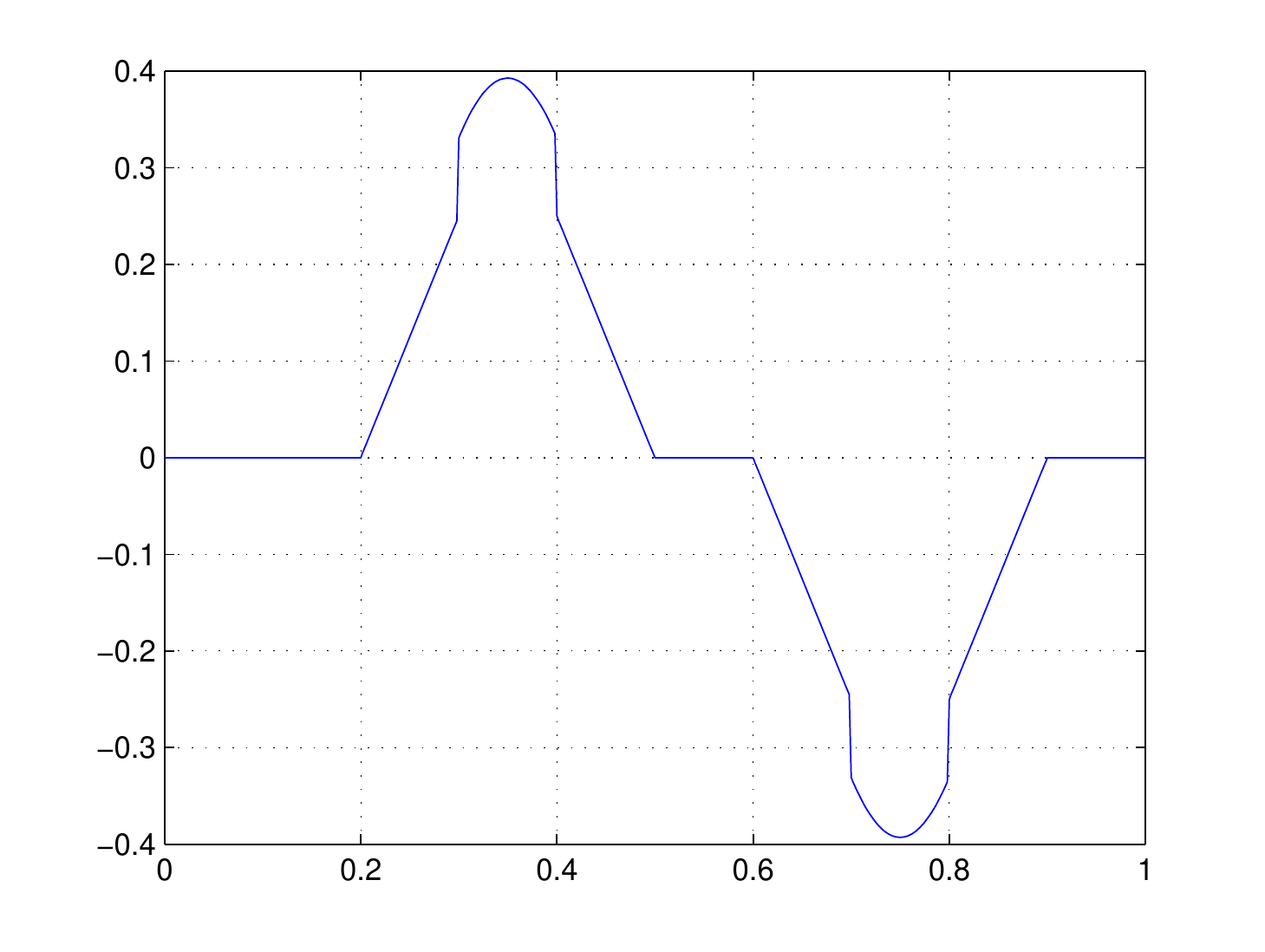}\\
\end{center}
\caption{\text{\label{UA1}$\theta$-evolution of} $\mathcal{U}(1, \theta,(1,0)),\,\,\mathcal{U}(1,\theta,(4,0))$
\text{and}
$\mathcal{U}(1,\theta,(1/3,1/3))$ \text{when} $\mathcal{U}$ \text{is given by} (\ref{wat}). }
\end{figure}
\begin{figure}[htbp]
\begin{center}  
  \includegraphics[angle=0, width=6cm]{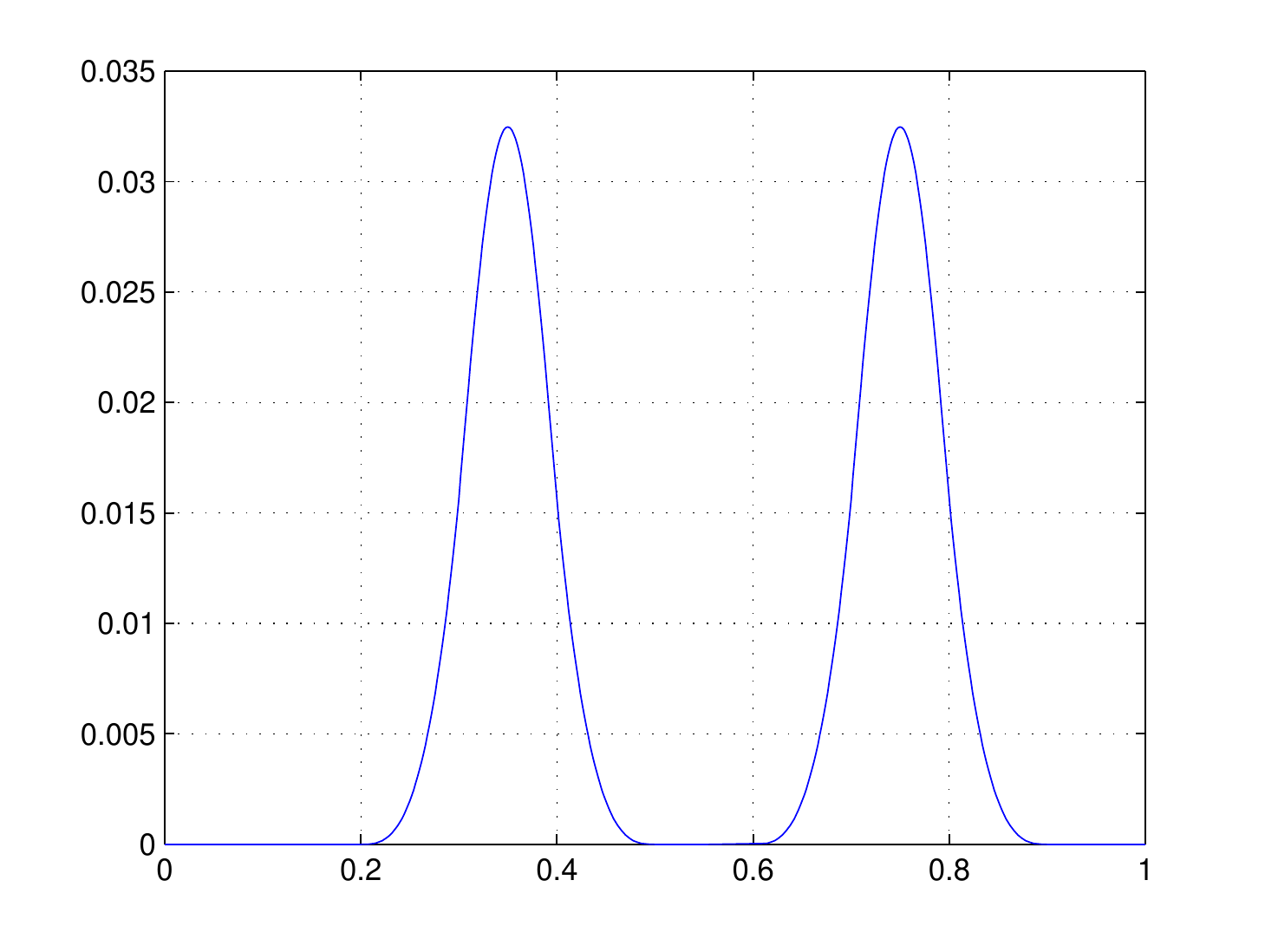}\includegraphics[angle=0, width=6cm]{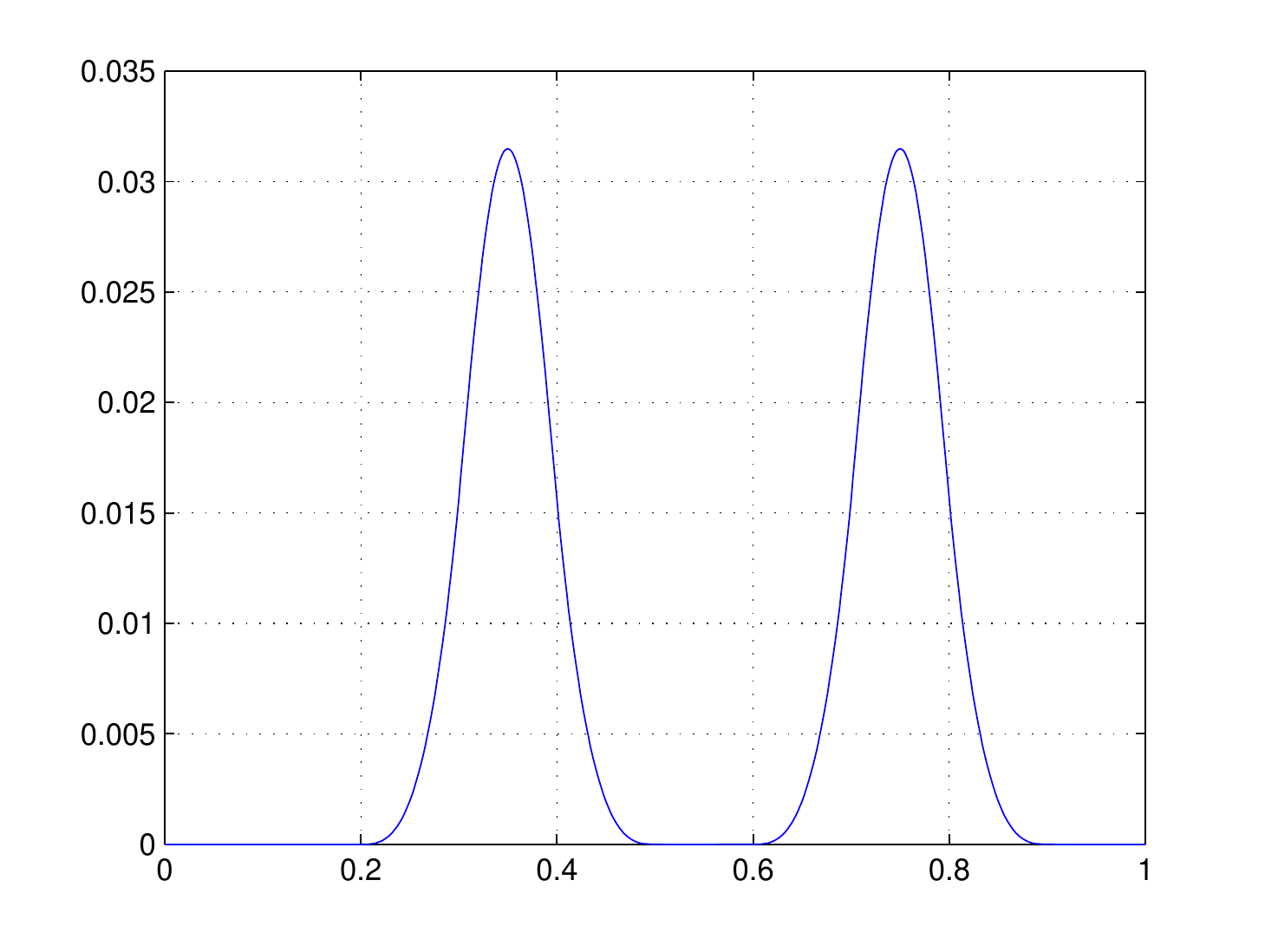}\\
\includegraphics[angle=0, width=6cm]{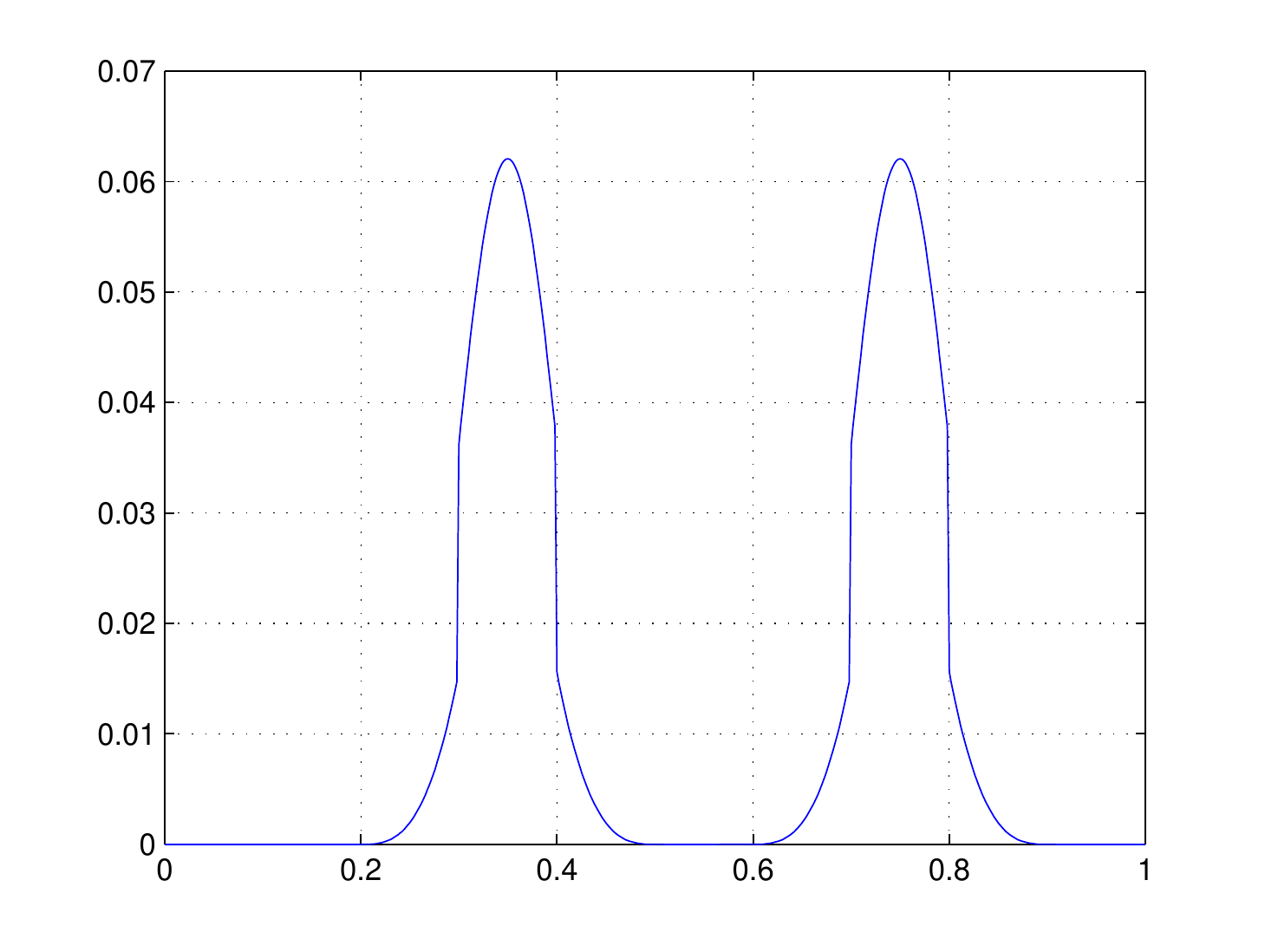}\\
\caption{$\theta$-evolution of $\widetilde{\mathcal{A}}(1, \theta,(1,0))$, $\widetilde{\mathcal{A}}(1,\theta,(4,0))$ and
 $\widetilde{\mathcal{A}}(1,\theta,(1/3,1/3))$ when $\mathcal{U}$ is given by (\ref{wat}).} \label{UA2}
 \end{center}
 \end{figure}

\newpage\noindent
Using this, we compute $Z_P(t,\frac{t}{\epsilon},x_1,x_2)$ and $z_{P}^\epsilon(t,x)$ for $P=4$.
To compute $z_{P}^\epsilon(t,x)$  we take $z_0(x_1,x_2)=\cos2\pi x_1+\cos 4\pi x_1$ which is not $Z(0,0,x_1,x_2).$
First we study  the errors $Z_P(t,\frac{t}{\epsilon},x_1,x_2)-z_{P}^\epsilon(t,x)$ at $t=1$. This quantity
decreases when $\epsilon$ decreases as illustrated in the following tabular.\\
\begin{center}
\begin{tabular}{|c|c|c|c|}
\hline
value of $\epsilon$ & norm $L^1$ & norm $L^2$ & norm $L^\infty$\\
\hline
0.01 &  0.012212 & 0.00048013 & 0.003376\\
0.03 &  0.019082     &  0.0005753     &  0.0017347         \\
0.05 &  0.030769      &  0.01348     &  0.0069818         \\
0.07 &  0.045123      &  0.029055      &  0.009         \\
0.09 &  0.17067      &   0.10562      &   0.038790        \\
0.1 &   0.3053      &    0.10562      &   0.04878        \\
\hline
\end{tabular}\\
Table: Errors norm $Z_P(t,\frac{t}{\epsilon},x_1,x_2)-z_{\tilde P}^\epsilon(t,x_1,x_2),\,\,\tilde P=(4,4),\,P=(4,4,4),\,\,t=1.$ \\

\end{center}
The results given in this table show that, at time $t=1,\,\, z^\epsilon(t,x)$ is closer to $Z(t,\frac{t}{\epsilon},x)$ when $\epsilon$ is very small. These results validate the results obtained in Theorem\,\ref{thHomSecHomn}.

In Figures \ref{mard3} and \ref{mard4},  we present simulations  at times $t=0.75$
and $t=0.775$. We see that $Z_P(t,\frac{t}{\epsilon},x_1,x_2)$ is close to $z_{P}^\epsilon(t,x_1,x_2).$ The numerical results shown in these figures
  are made with $\epsilon=0.1.$

\begin{figure}[htbp]
\begin{center}
 \includegraphics[angle=0, width=8cm]{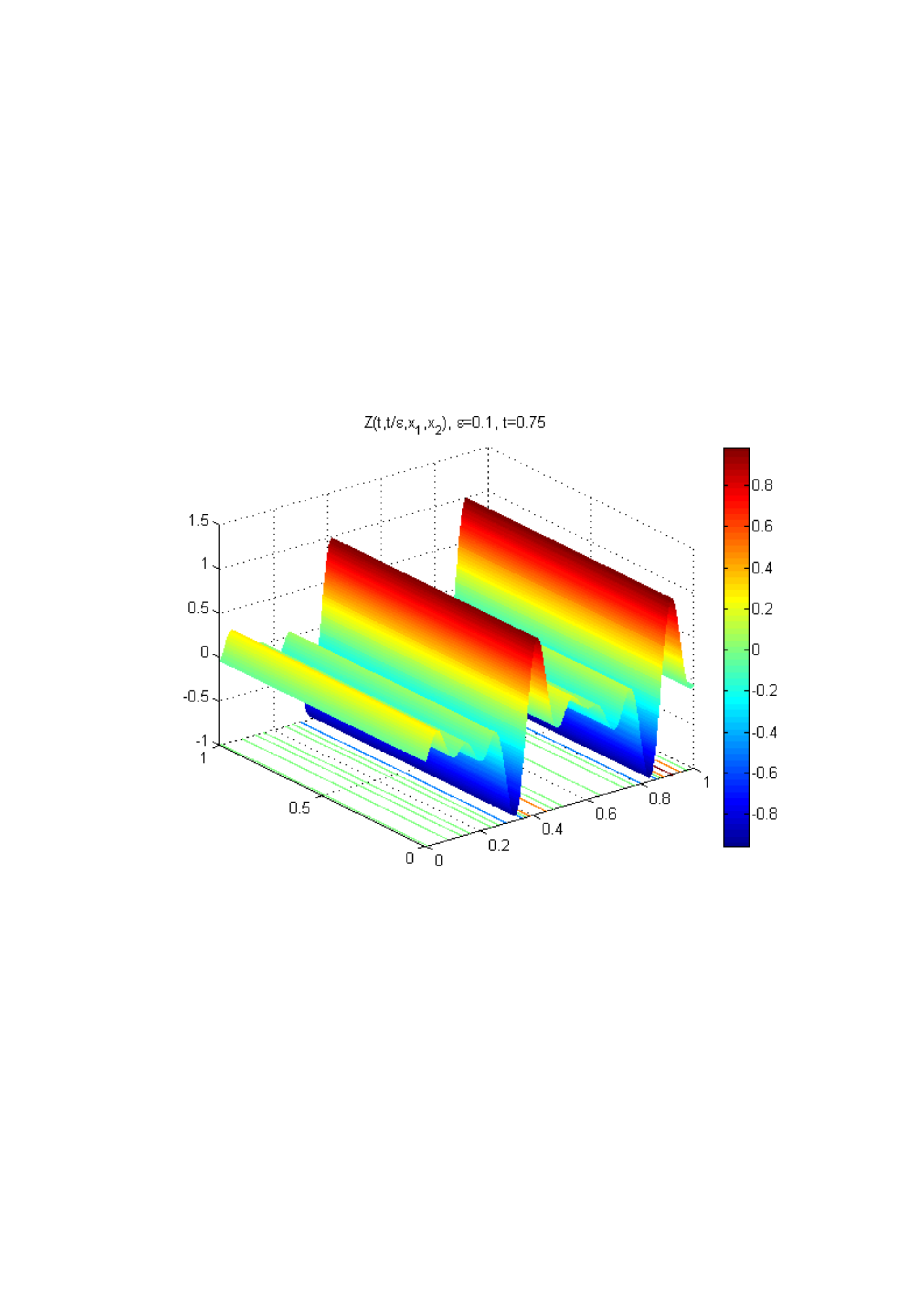}\includegraphics[angle=0, width=8cm]{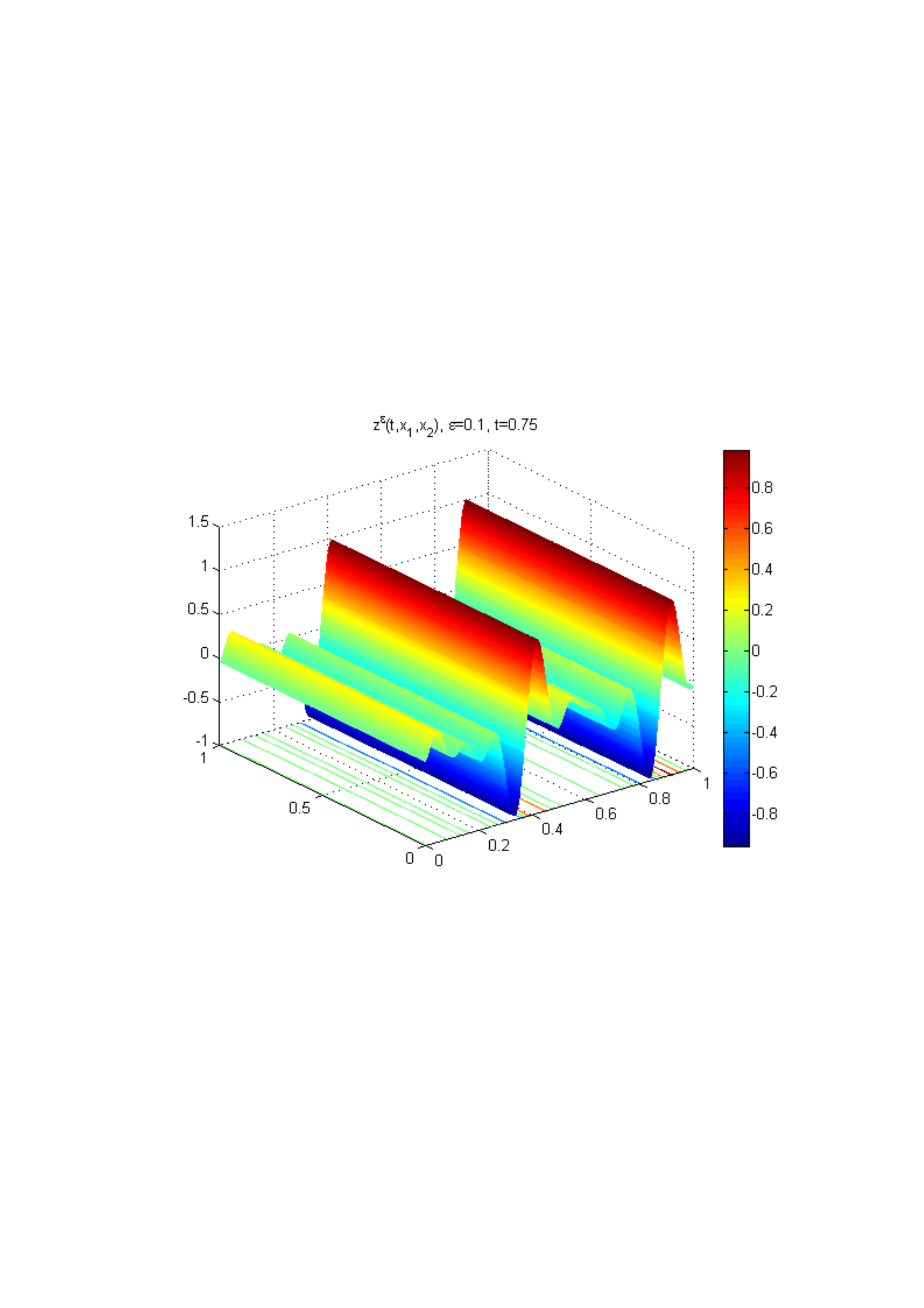}\\\end{center}
 \caption{\label{mard3}Comparison of $z_P^{\epsilon}(t,x_1,x_2)$  and
 $Z_P(t,\frac{t}{\epsilon},x_1,x_2)$,  $P=4$; $t=0.75,$ $\epsilon=0.1,\,\,z_0(x_1,x_2)=\cos2\pi x_1+\cos 4\pi x_1.$
 On the left $Z_P(t,\frac{t}{\epsilon},x_1,x_2)$, on the right $z_{P}^\epsilon(t,x_1,x_2)$.}
\begin{center}
 \includegraphics[angle=0, width=8cm]{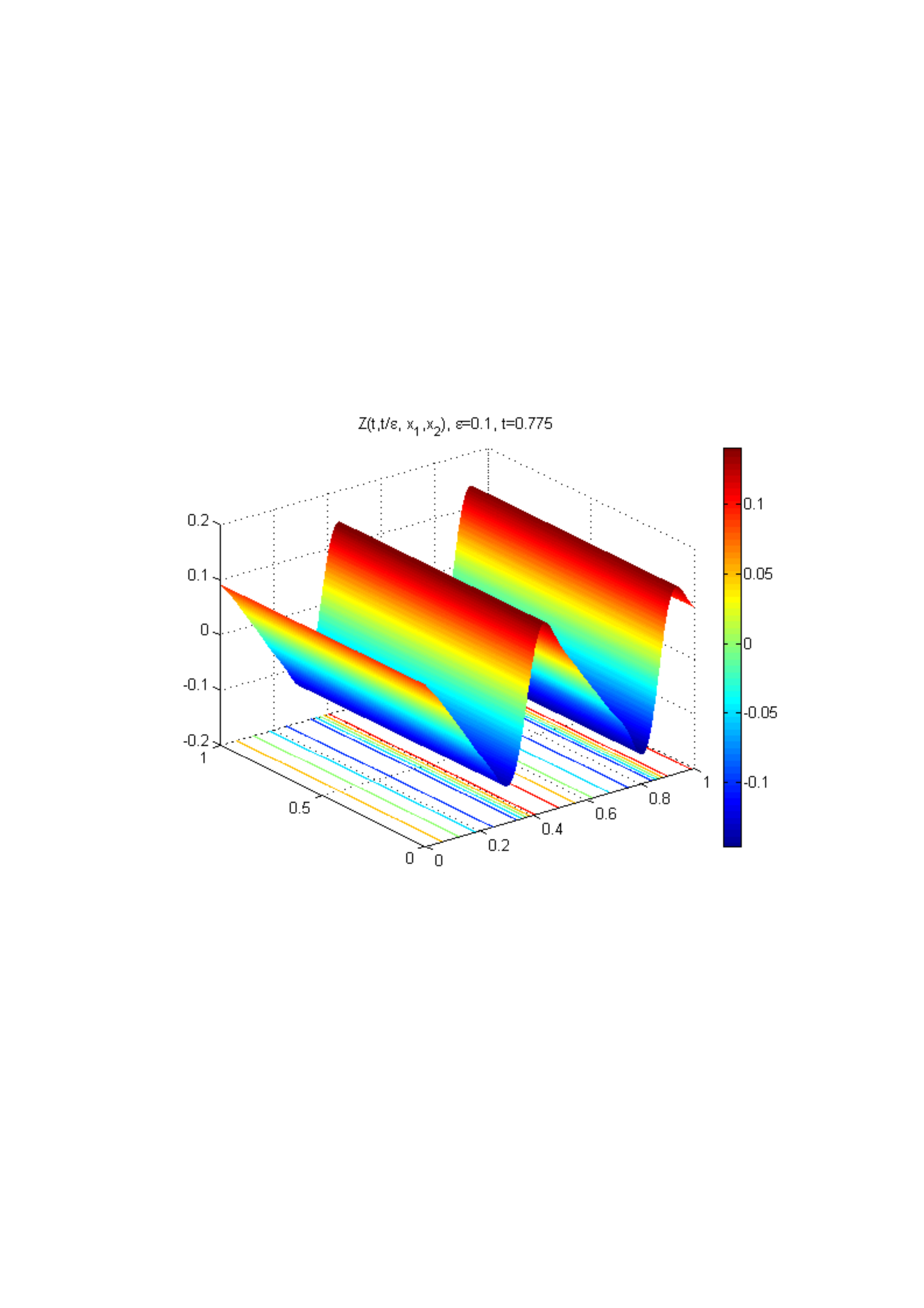}\includegraphics[angle=0, width=8cm]{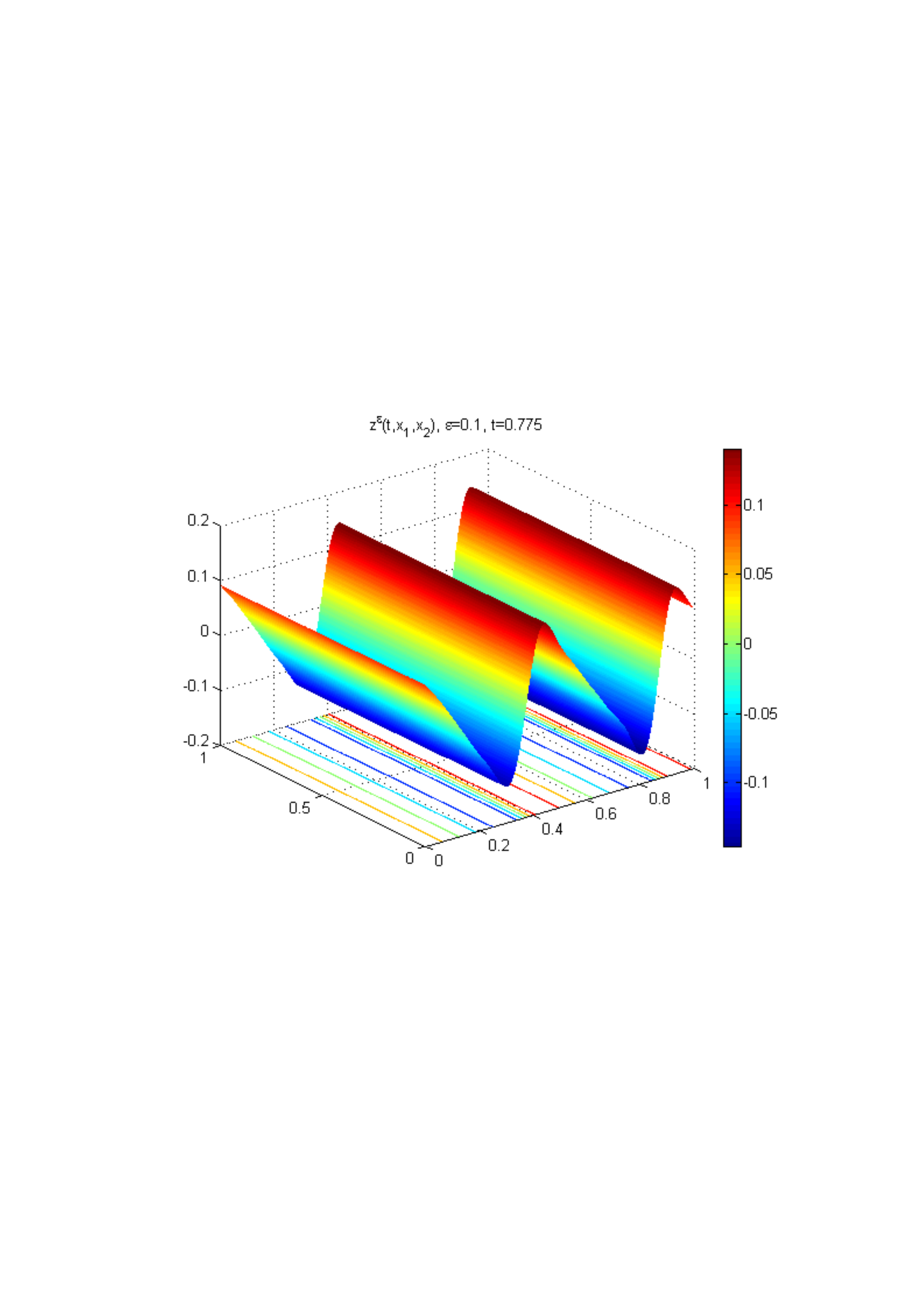}\\\end{center}
 \caption{\label{mard4}Comparison  of $z_{P}^{\epsilon}(t,x_1,x_2)$  and $Z_P(t,\frac{t}{\epsilon},x_1,x_2)$, $P=4$; $ t=0.775,$ $\epsilon=0.1,\,\,z_0(x_1,x_2)=\cos2\pi x_1+\cos 4\pi x_1$. On the left $Z_P(t,\frac{t}{\epsilon},x_1,x_2)$,
 on the right $z_{P}^\epsilon(t,x_1,x_2)$.}
\end{figure}
\newpage
In Figure\,\ref{mard13} and \ref{mard14}, we do the same but for  $\epsilon=0.005.$ The numerical results show that $z_{P}^\epsilon(t,x)$ is
also very close to $Z_P(t,\frac{t}{\epsilon},x_1,x_2).$
\newpage
\begin{figure}[htbp]
\begin{center}
 \includegraphics[angle=0, width=8cm]{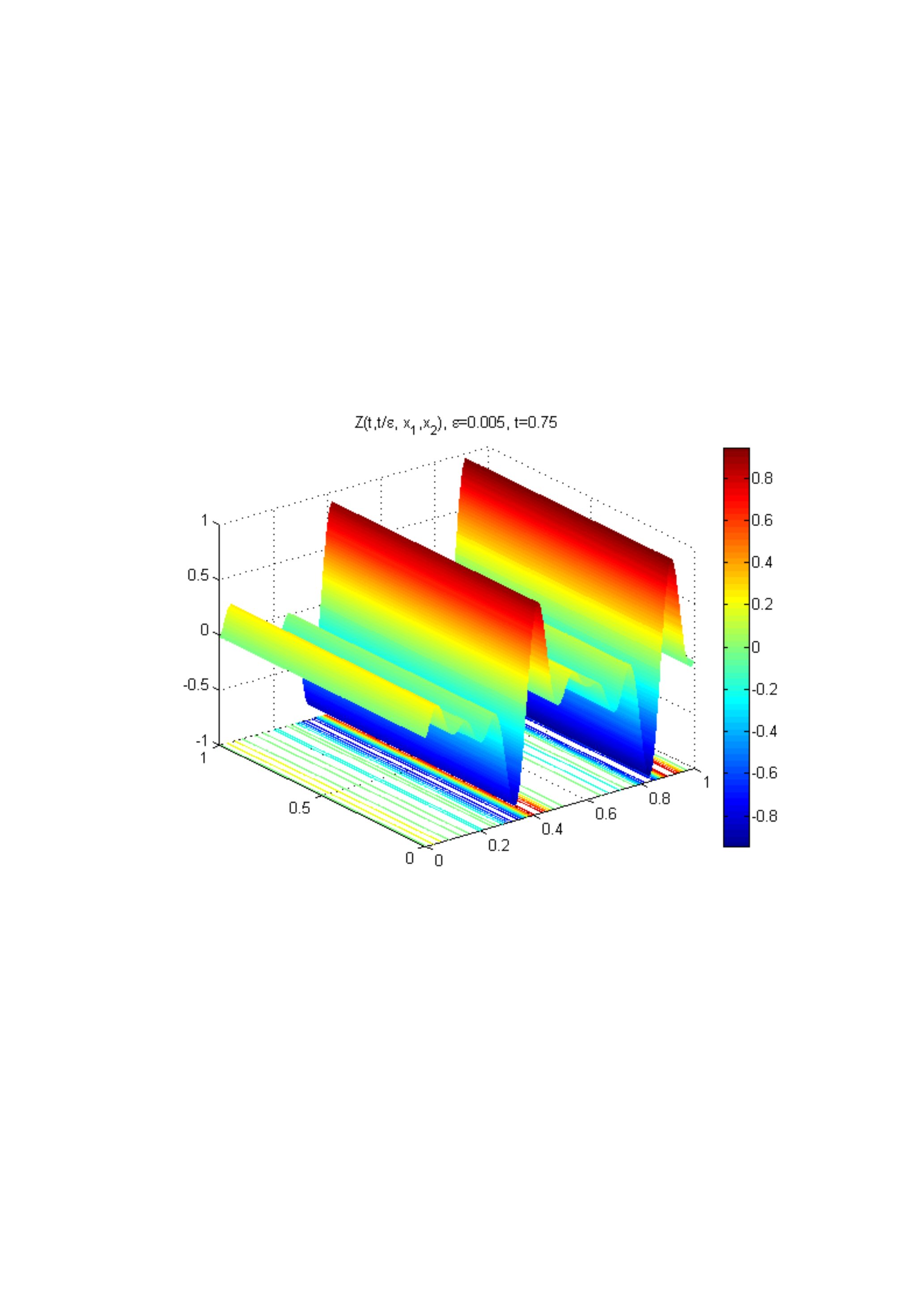}\includegraphics[angle=0, width=8cm]{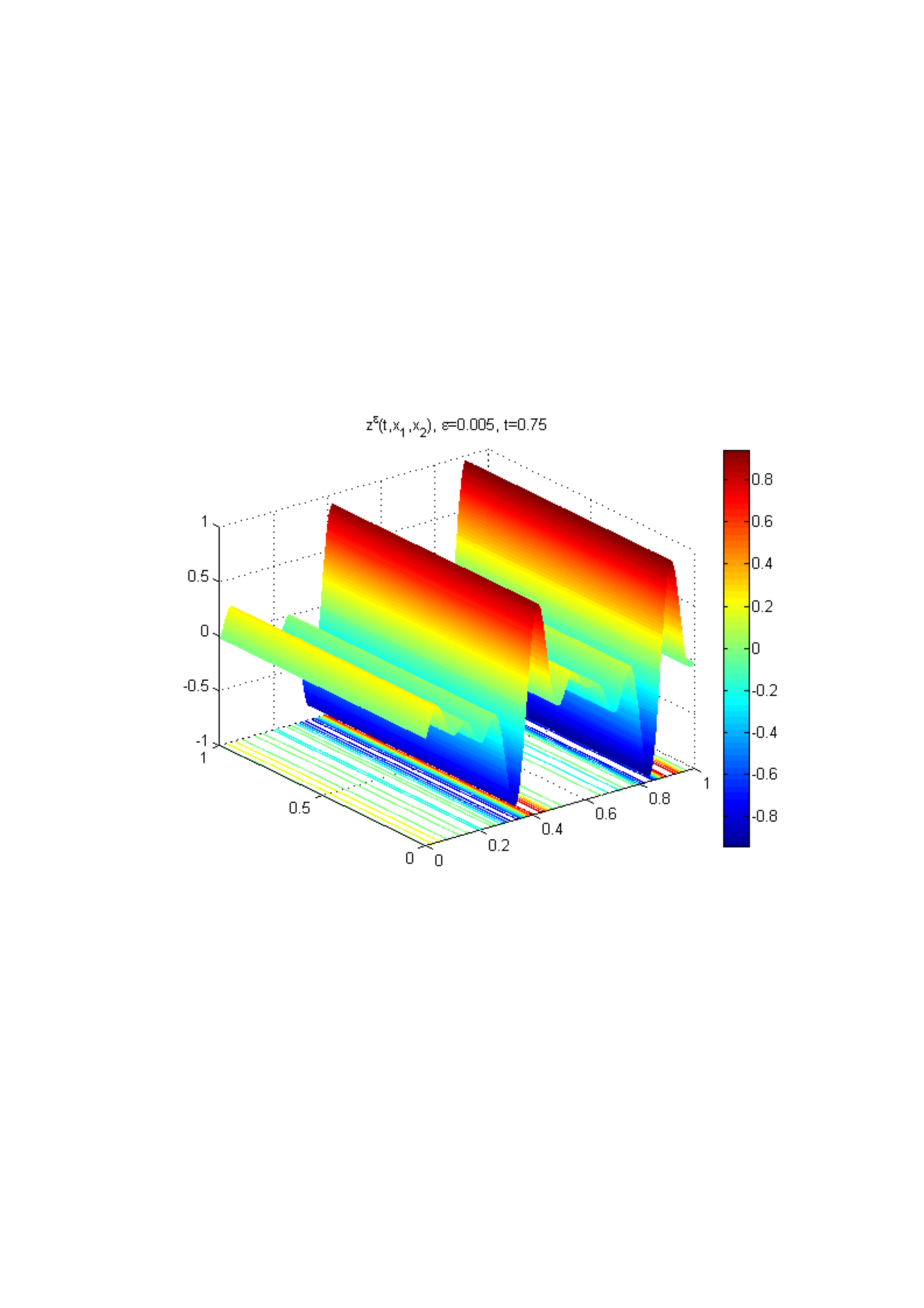}\\\end{center}
 \caption{\label{mard13}Comparison of $z_P^{\epsilon}(t,x_1,x_2)$  and $Z_P(t,\frac{t}{\epsilon},x_1,x_2)$, $ P=4$; $t=0.75,$
 $\epsilon=0.005,\,\,z_0(x_1,x_2)=\cos2\pi x_1+\cos 4\pi x_1.$ On the left $Z_P(t,\frac{t}{\epsilon},x_1,x_2)$, on the right
 $z_{P}^\epsilon(t,x_1,x_2)$.}
\begin{center}
 \includegraphics[angle=0, width=8cm]{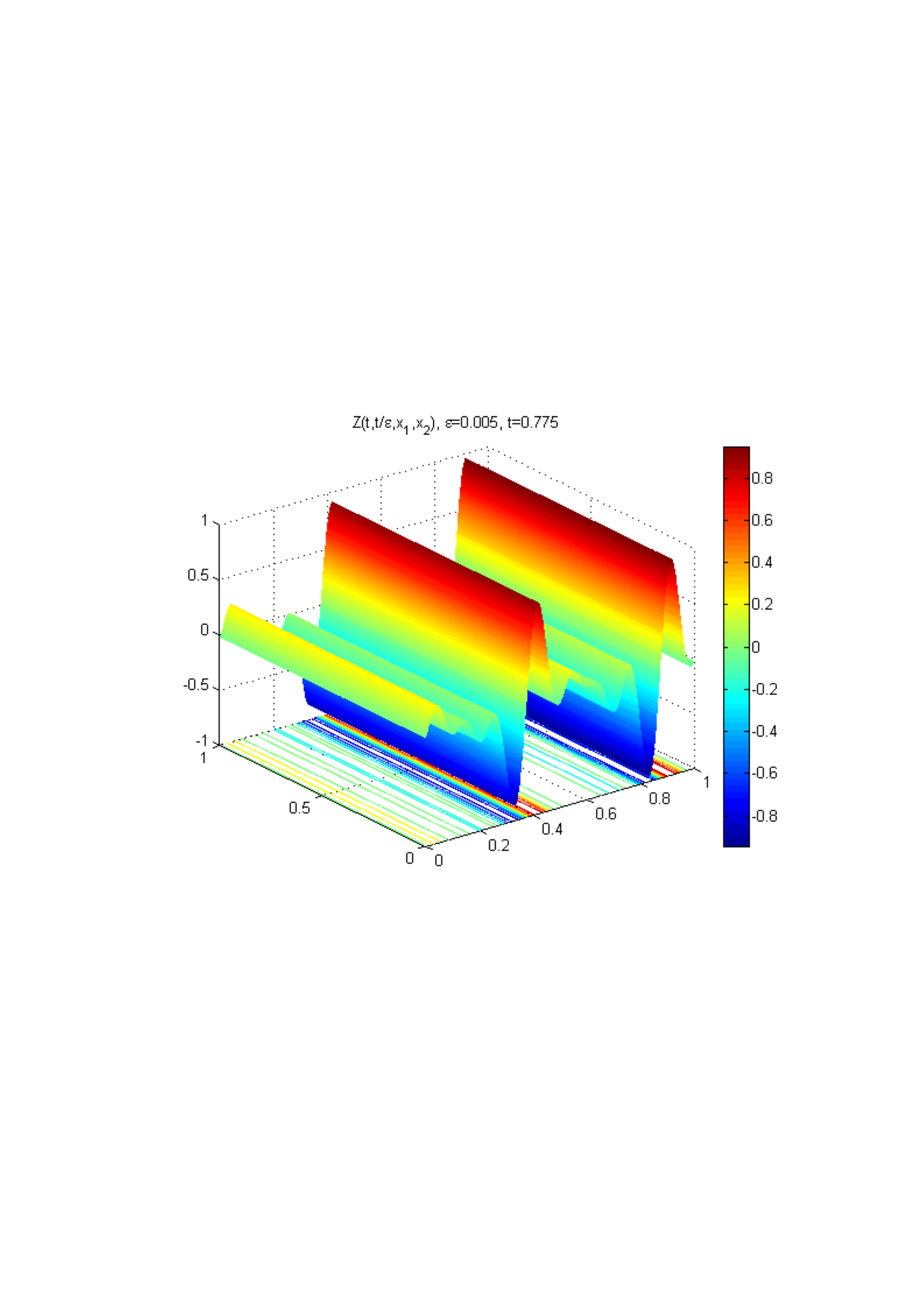}\includegraphics[angle=0, width=8cm]{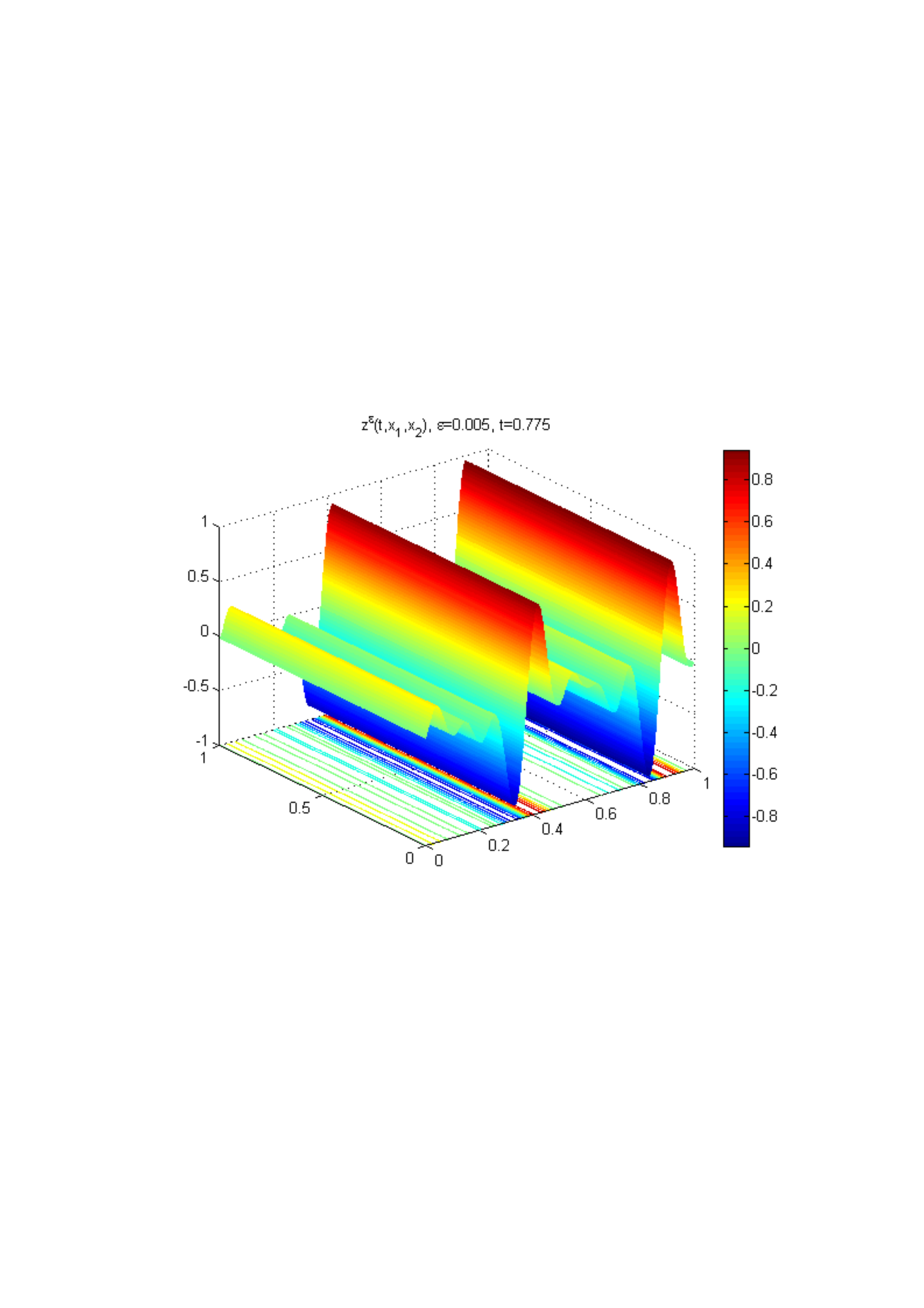}\\\end{center}
 \caption{\label{mard14}Comparison of $z_{P}^\epsilon(t,x_1,x_2)$  and $Z(t,\frac{t}{\epsilon},x_1,x_2)$, $P=4$;
 $t=0.775,$ $\epsilon=0.005,\,\,z_0(x_1,x_2)=\cos2\pi x_1+\cos 4\pi x_1.$ On the left $Z_P(t,\frac{t}{\epsilon},x_1,x_2)$,
 on the right $z_{P}^\epsilon(t,x_1,x_2)$.}
\end{figure}
We remark that for $\epsilon=0.1$  and $\epsilon=0.005,$ the solution $z_{P}^\epsilon(t,x)$ is very close
to $Z_P(t,\frac t \epsilon,x).$ But the approximation $z_{P}^\epsilon(t,x)\sim Z_P(t,\frac t\epsilon,x)$ is very good when $\epsilon$ is very small.\\
To show that $z_{P}^\epsilon $ is very close to $Z_P, $ we construct the same figures as previously but  in dimension 2
with $\epsilon=0.005 $ i.e. we construct $z_{P}^\epsilon(t,x_1,0)$ and $Z_P(t,\frac{t}{\epsilon},x_1,0)$ for $\epsilon=0.005$
at time $t=0.775$. This is given in Figure\,\ref{fg4}.
\newpage
\begin{figure}
\begin{center}
 \includegraphics[angle=0, width=8cm]{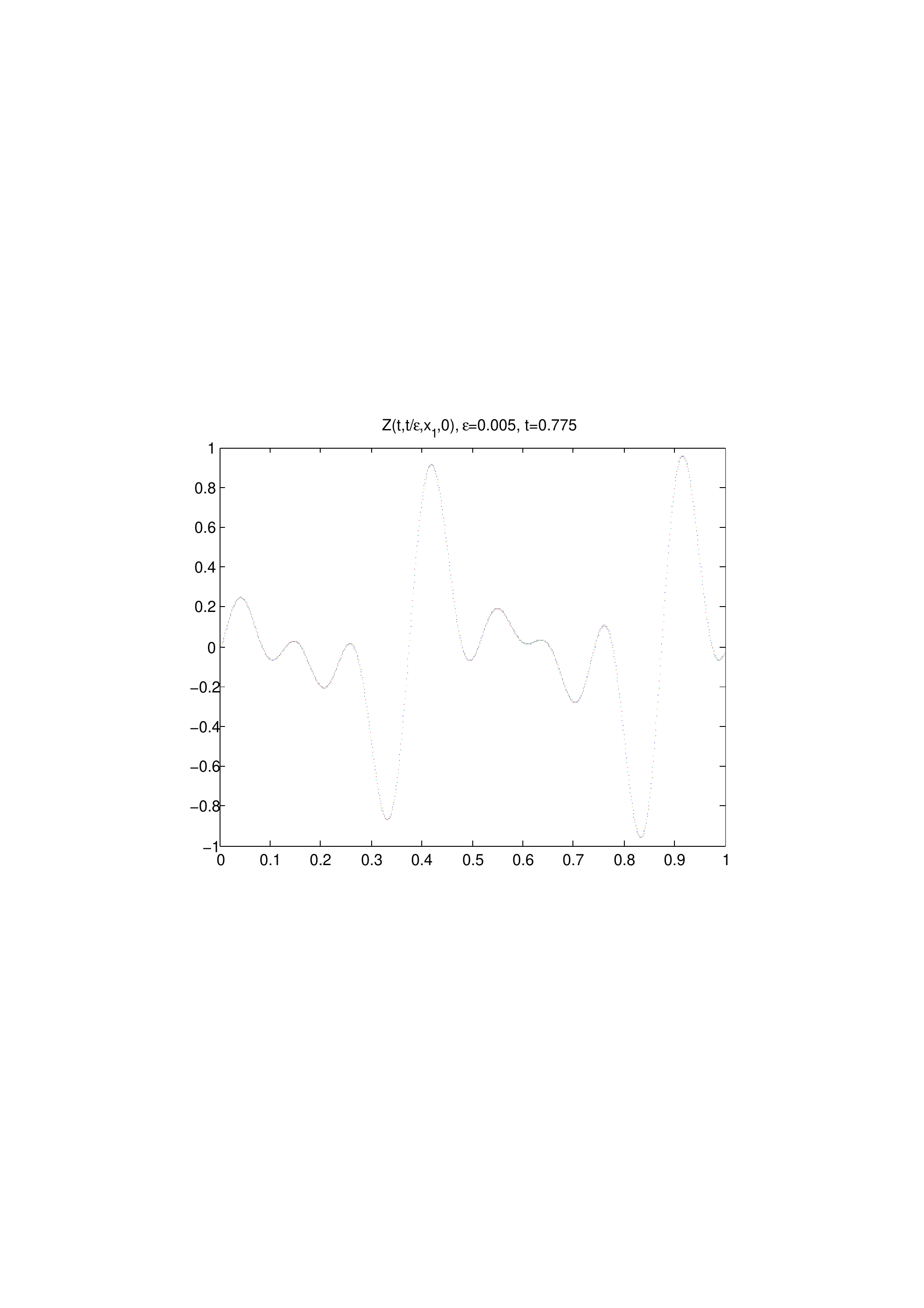}\includegraphics[angle=0, width=8cm]{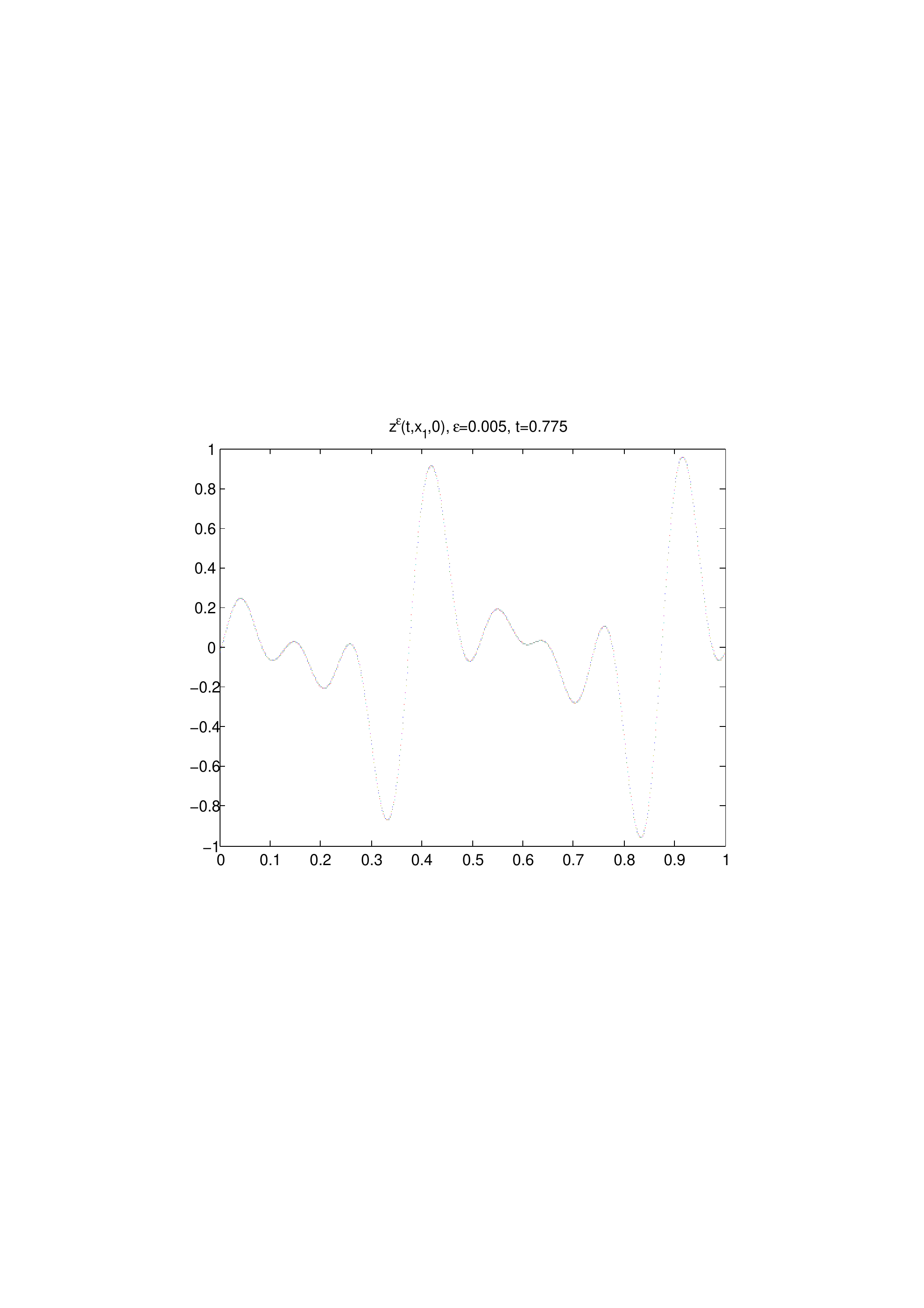}\\\end{center}
  \caption{\label{fg4} Comparison  of $z_{P}^\epsilon(t,x_1,0)$  and $Z_P(t,\frac{t}{\epsilon},x_1,0),\,\,t=0.775,\,\,
  \epsilon=0.005.$ On the left $Z_P(t,\frac{t}{\epsilon},x_1,0)$, on the right $z_{P}^\epsilon(t,x_1,0).$}
\end{figure}
~
%
\begin{figure}[htbp]
\begin{center}
 \includegraphics[angle=0, width=7cm]{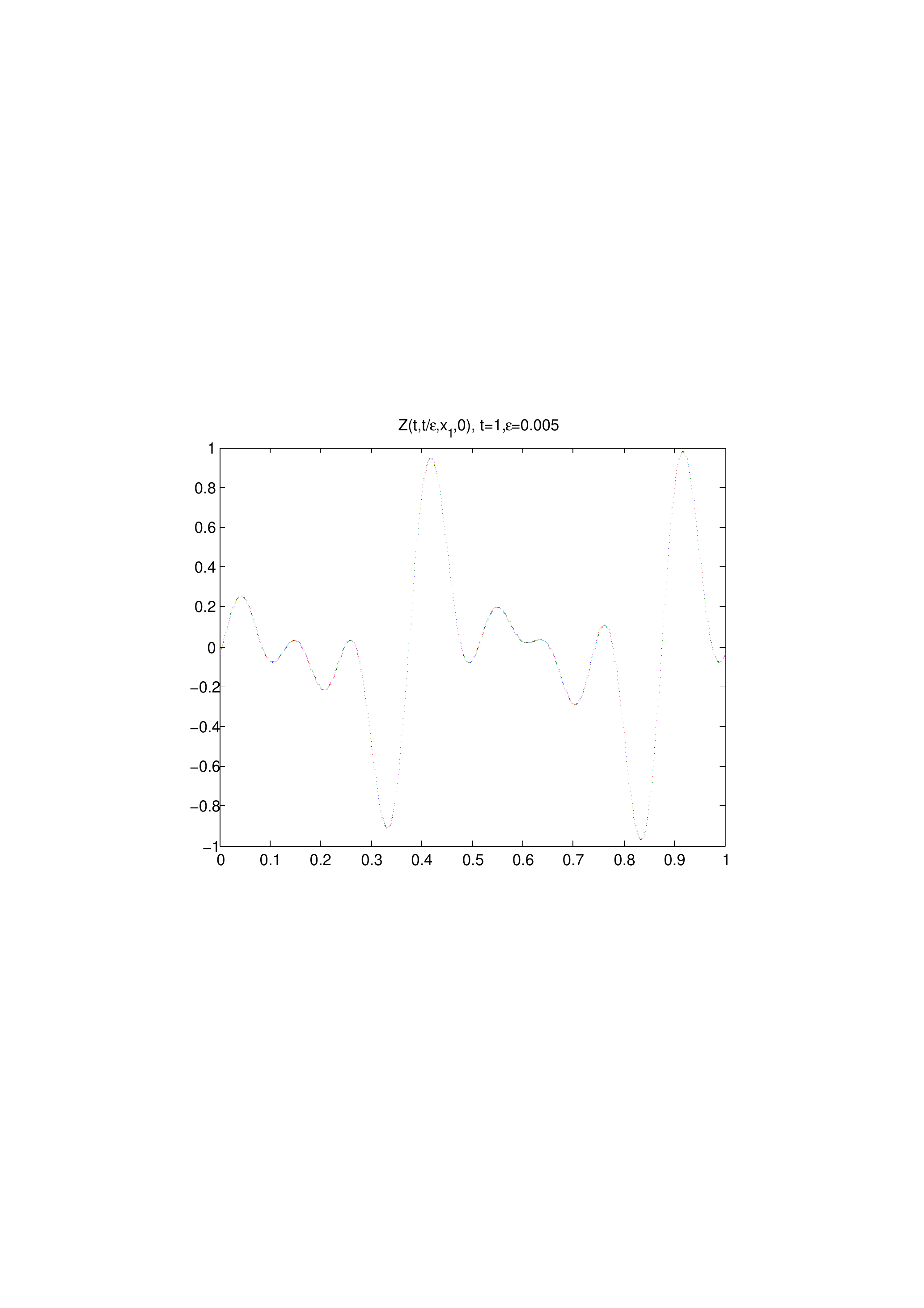}\includegraphics[angle=0, width=7cm]{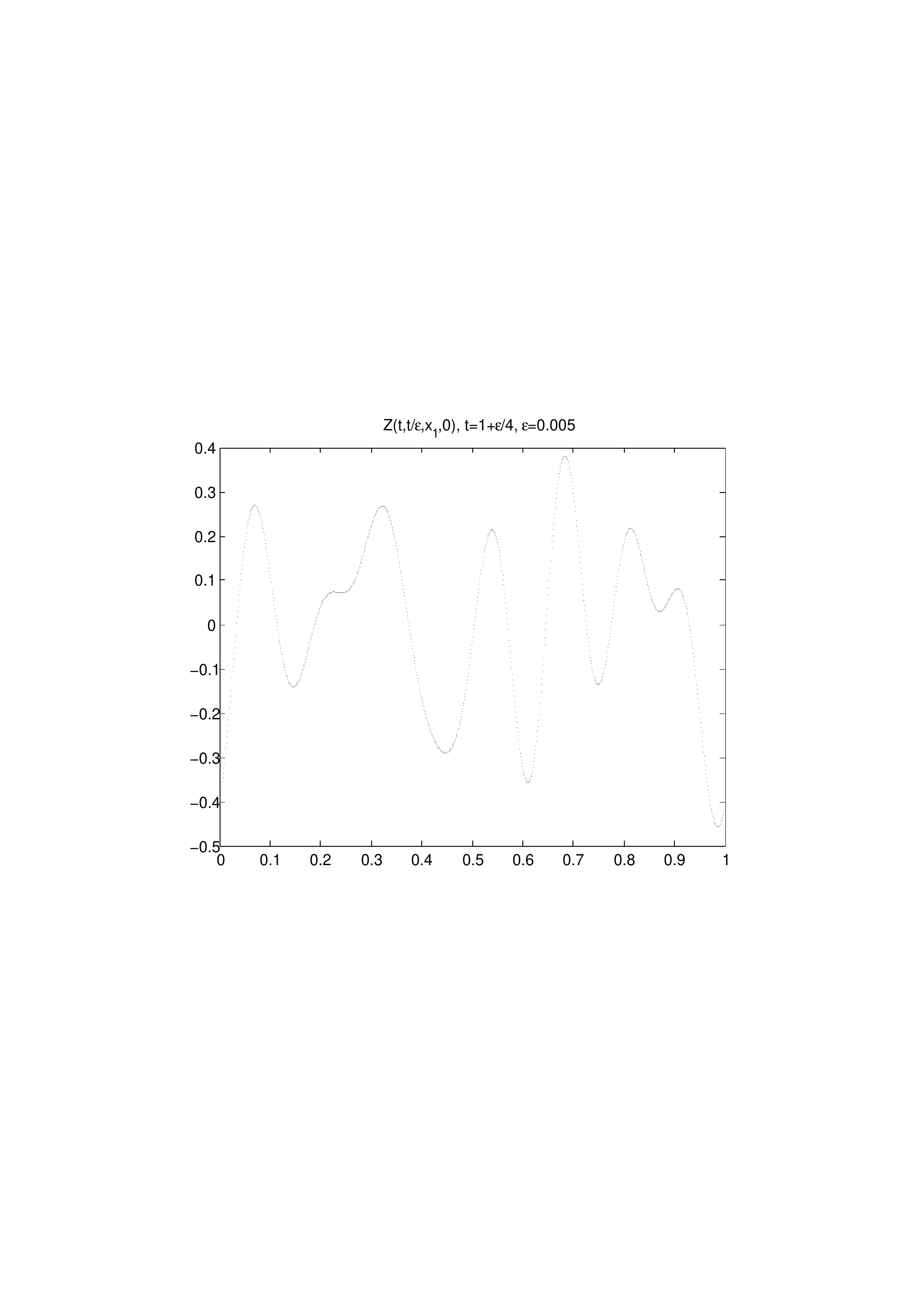}\\\includegraphics[angle=0, width=7cm]{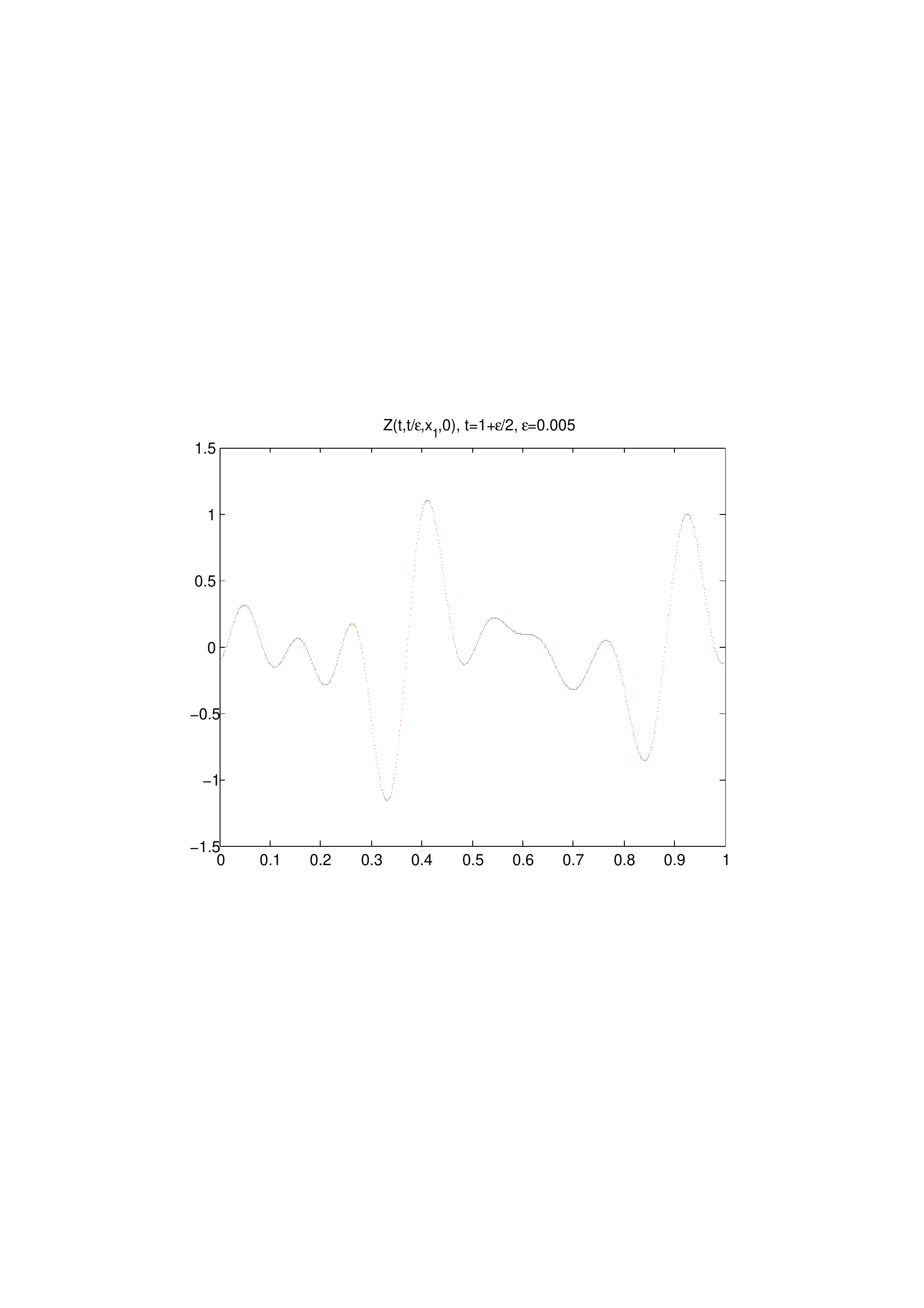}\includegraphics[angle=0, width=7cm]{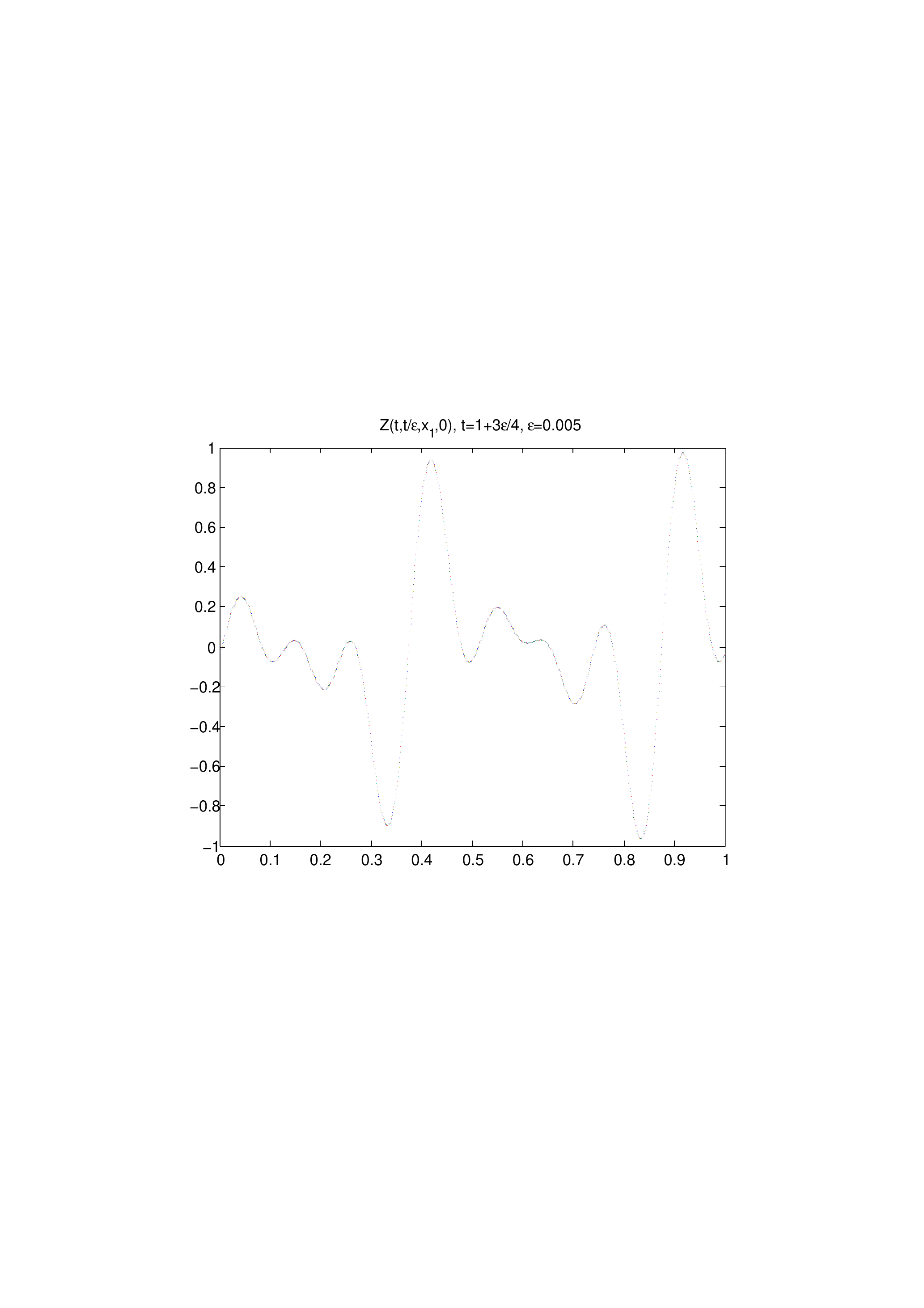}\\
 \includegraphics[angle=0, width=7cm]{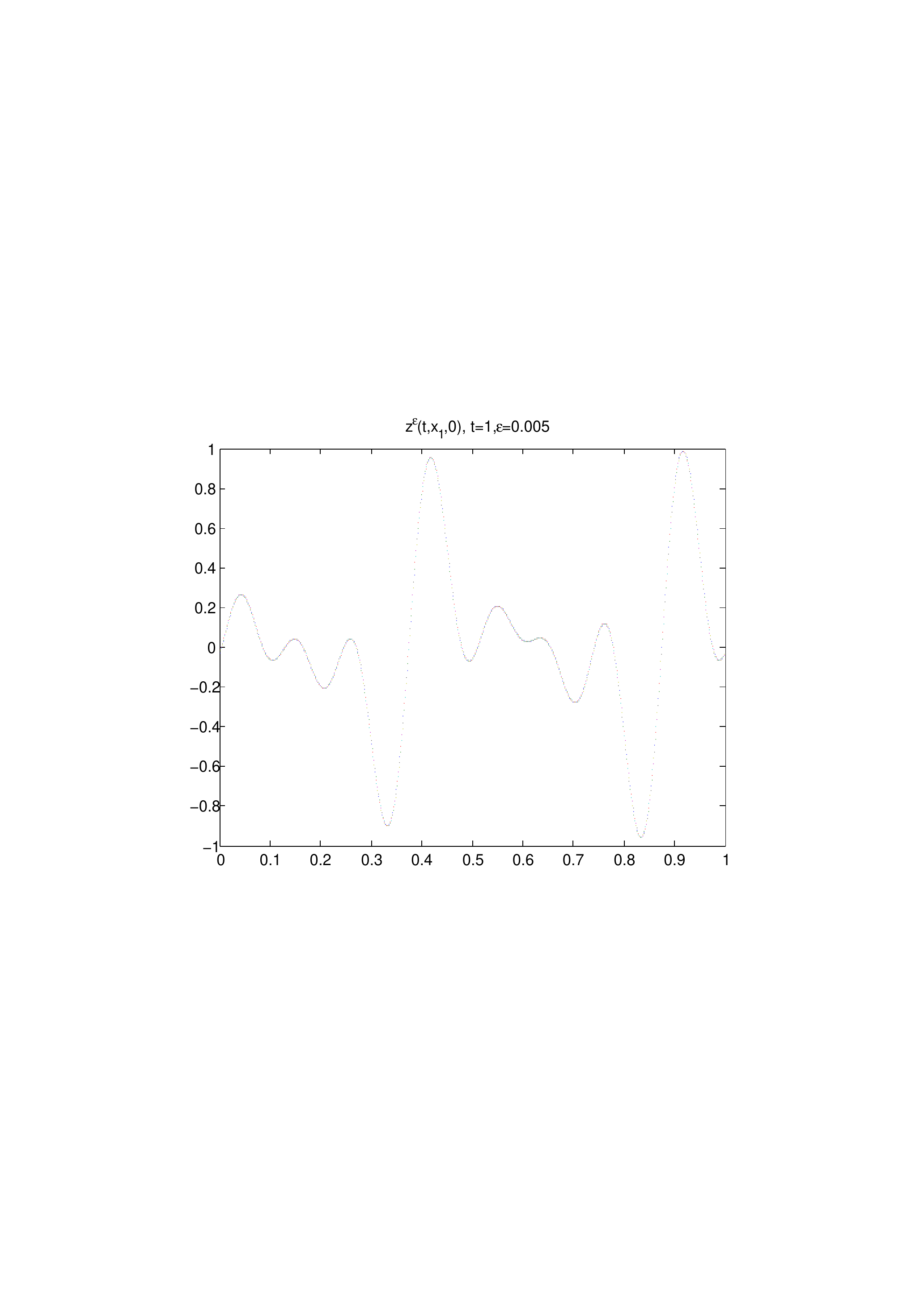}\includegraphics[angle=0, width=7cm]{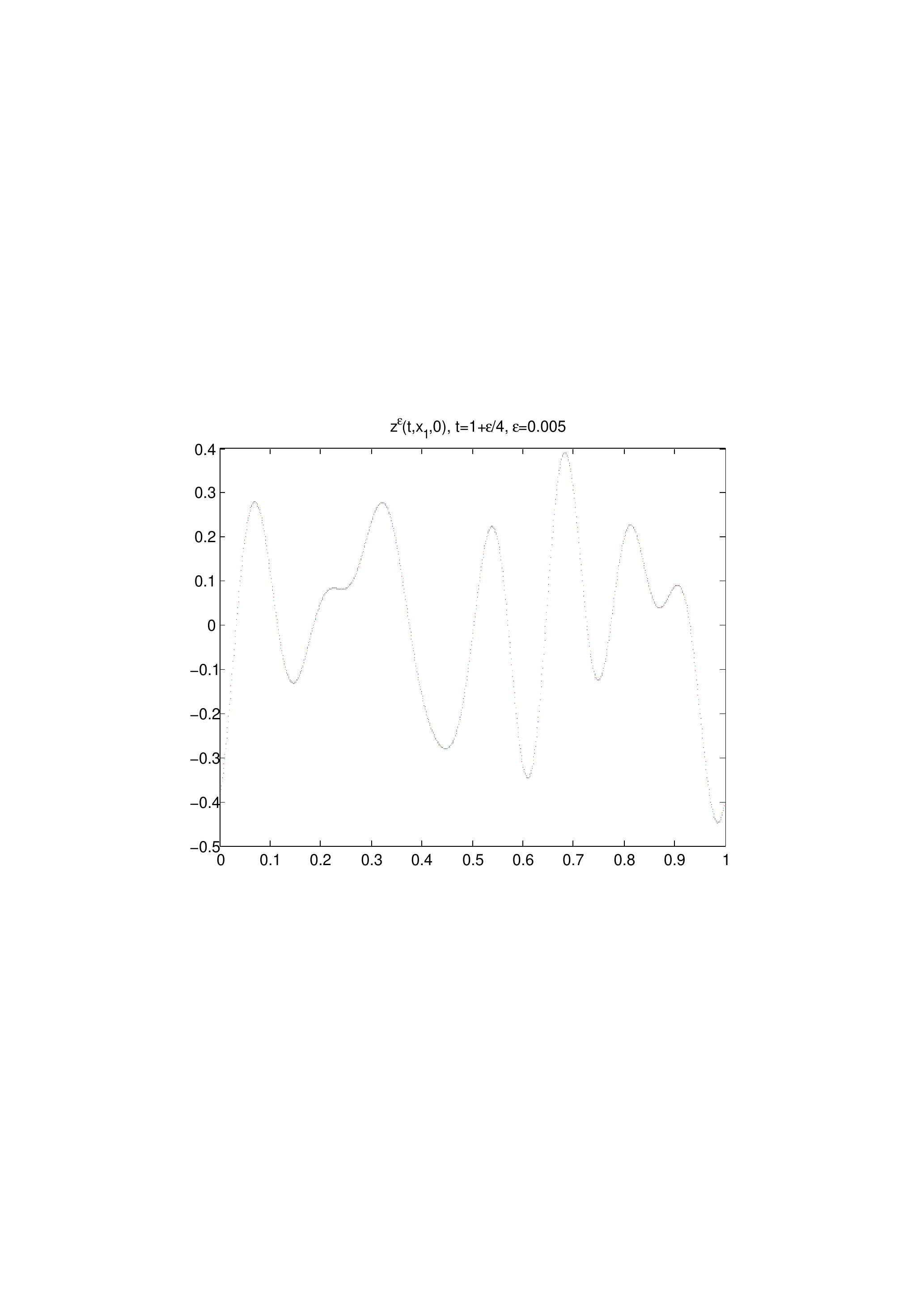}\\\includegraphics[angle=0, width=7cm]{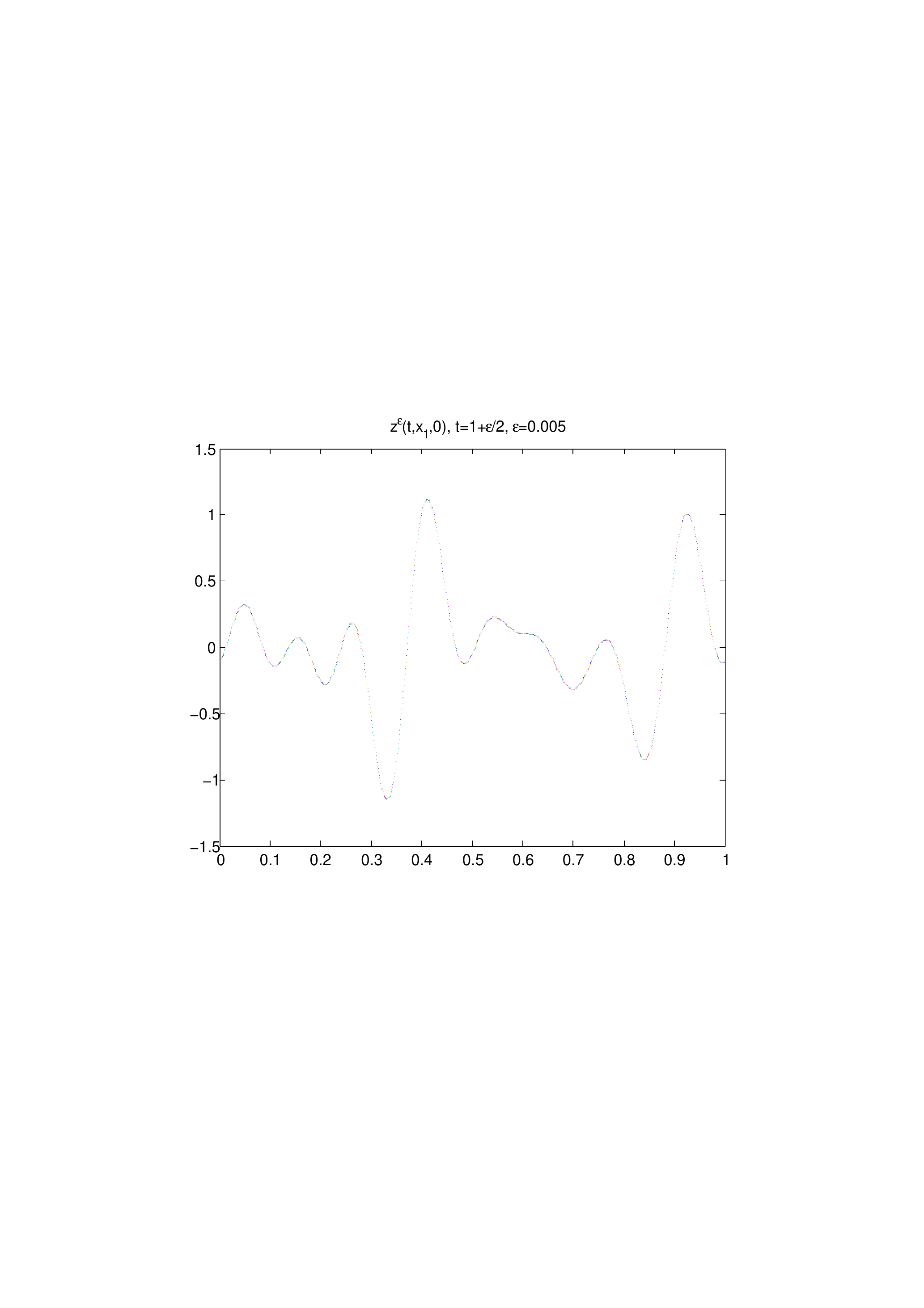}\includegraphics[angle=0, width=7cm]{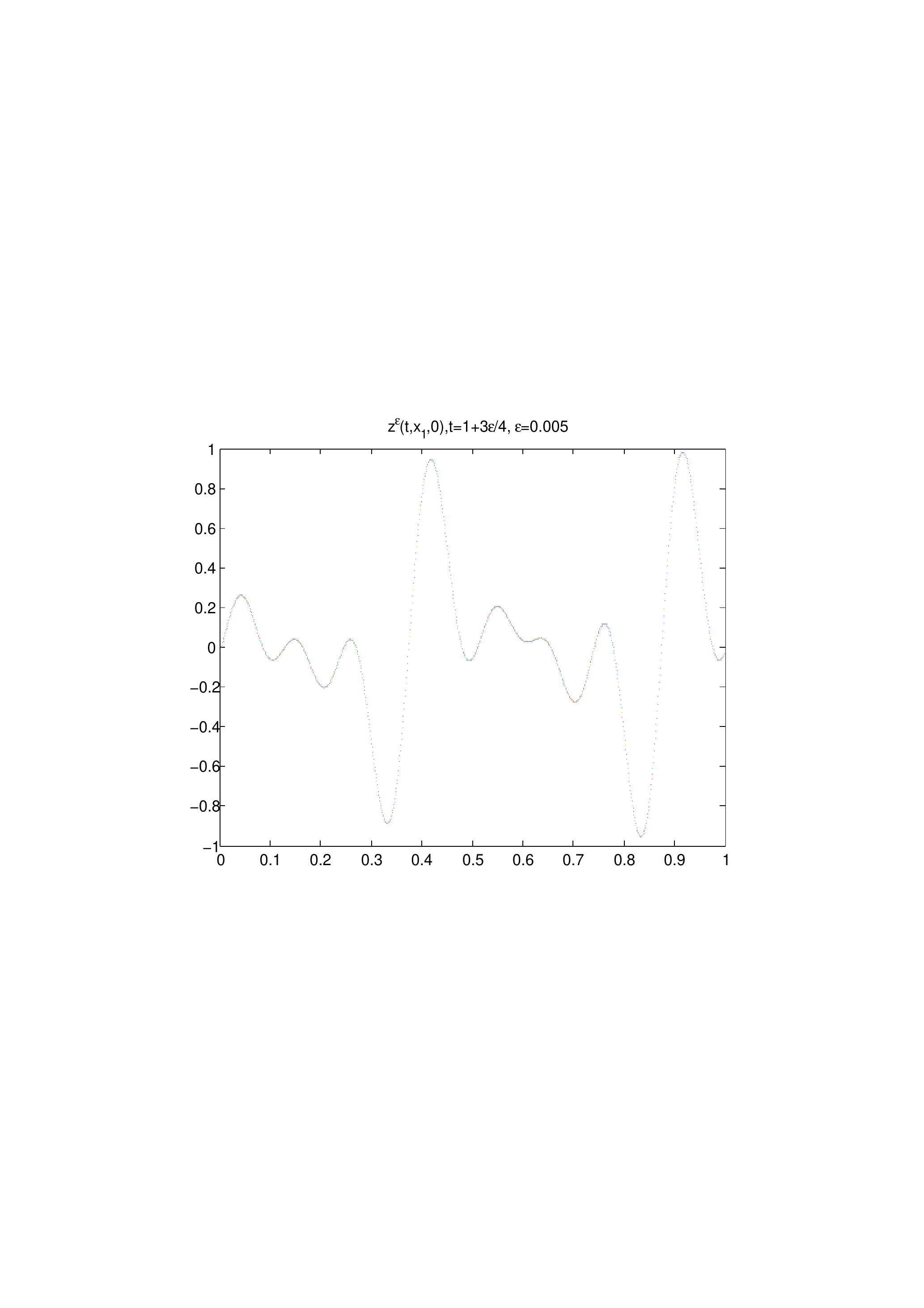}\\\end{center}
  \caption{\label{imper2}Evolution of $Z_P(t,\frac{t}{\epsilon},x_1,0)$(top) and $z_{P}^\epsilon(t,x_1,0)(\text{bottom}),
  \,\,t=1+n\epsilon/4,\,\,n=0,1,2,3.$
}
\end{figure}
\noindent The results in Figure\,\ref{imper2} show that $Z_P$ and $z_{P}^\epsilon$
have the same behavior in the same period and $Z_P$ is very close to $z_{P}^\epsilon.$
We also notice that, despite the small shifts that occur during a period, the two solutions glue together.
\newpage

\end{document}